\documentclass[10pt]{amsart}
\usepackage[utf8]{inputenc}
\usepackage{bm}
\usepackage{fontenc}
\usepackage{amsfonts}
\usepackage{amssymb}
\usepackage{amsmath}
\usepackage{amsthm}\usepackage{mathtools}
\usepackage{enumerate}
\usepackage{hyperref}\usepackage{enumitem}
\usepackage{mathrsfs}
\usepackage{tikz}\usetikzlibrary{calc}
\usepackage{marginnote}
\usepackage{xcolor,enumitem}
\usepackage{soul}\usepackage{tikz}
\usepackage[all,cmtip]{xy}
\usepackage{tikz,tikz-cd}

\newcommand{\N}{\mathbb{N}}

\newcommand{\cE}{\mathcal{E}}

\newcommand{\SOTh}{\mathrm{SOT}\text{-}}

\newcommand{\cstu}{\mathrm{C}^*_u}

\newcommand{\cstql}{\mathrm{C}^*_{ql}}

\newcommand{\roeq}{\mathrm{Q}^*_u}
\newcommand{\qlq}{\mathrm{Q}^*_{ql}}
\newtheorem*{rigprob*}{Rigidity Problem for uniform Roe Algebras}
\newtheorem*{rigprobcorona*}{Rigidity Problem for uniform Roe Coronas}

\newcommand{\propg}{\mathrm{prop}}

\newcommand{\cstar}{$\mathrm{C}^*$}

\newtheorem{theorem}{Theorem} [section]
\newtheorem*{theorem*}{Theorem}
\newtheorem{prop}[theorem]{Proposition}
\newtheorem{problem}[theorem]{Problem}
\newtheorem*{proposition*}{Proposition}
\newtheorem{lemma}[theorem]{Lemma}
\newtheorem*{lemma*}{Lemma}

\newtheorem*{corollary*}{Corollar}

\newtheorem*{fact*}{Fact}
\theoremstyle{definition}
\newtheorem{definition}[theorem]{Definition}
\newtheorem*{definition*}{Definition}
\newtheorem{claim}[theorem]{Claim}
\newtheorem*{claim*}{Claim}

\newtheorem*{conjecture*}{Conjecture}

\newtheorem{theoremi}{Theorem}

\theoremstyle{remark}

\newtheorem*{example*}{Example}

\newtheorem*{remark*}{Remark}

\newtheorem*{note*}{Note}
\newtheorem*{question*}{Question}

\newcommand{\norm}[1]{\left\lVert #1 \right\rVert}

\DeclareMathOperator{\Fin}{Fin}

\DeclareMathOperator{\Aut}{Aut}
\DeclareMathOperator{\rank}{rank}
\DeclareMathOperator{\corank}{corank}
\DeclareMathOperator{\Img}{Im}

\DeclareMathOperator{\Ad}{Ad}

\newcounter{my_enumerate_counter}
\newcommand{\pushcounter}{\setcounter{my_enumerate_counter}{\value{enumi}}}
\newcommand{\popcounter}{\setcounter{enumi}{\value{my_enumerate_counter}}}

\usepackage{enumitem}

\title{The rigidity problem for uniform Roe algebras}

\author{Alessandro Vignati}
\address[AV]{
 Institut de Math\'ematiques de Jussieu - Paris Rive Gauche (IMJ-PRG)\\
 Universit\'e Paris Cit\'e\\ Institut Universitaire de France\\
 B\^atiment Sophie Germain\\
 8 Place Aur\'elie Nemours \\ 75013 Paris, France}
\email{ale.vignati@gmail.com}
\urladdr{http://www.automorph.net/avignati}

\date{\today}

\begin{document}

\begin{abstract}
We solve the rigidity problem for uniform Roe algebras, by showing that two uniformly locally finite metric spaces with isomorphic uniform Roe algebras are bijectively coarsely equivalent.
\end{abstract}

\maketitle    
\section{Introduction}

This article focuses on uniform Roe algebras, \cstar-algebras (self-adjoint norm closed subalgebras of bounded operators on a complex Hilbert space) capable of coding in operator algebraic terms coarse geometric properties of metric spaces.

Coarse geometry is the study of metric spaces  from a large scale. The local structure of the spaces involved is irrelevant, and one typically works with discrete spaces. Here, we focus on \emph{uniformly locally finite} (u.l.f.\ from now on) spaces $(X,d_X)$, meaning that for all $r>0$
\[
\sup_{x\in X}|B_r(x)|<\infty,
\]
where $B_r(x)$ is the ball of radius $r$ centered at $x$, and $|B_r(x)|$ is its cardinality. U.l.f.\ spaces are often referred to as spaces \emph{of bounded geometry}.
Examples of u.l.f.\ metric spaces important for applications are (Cayley graphs of) finitely generated groups with the word metric and discretisations of Riemannian manifolds. 
Historically, coarse geometric ideals appeared in Mostow's rigidity theorem as well as in early work on growth of groups. The impetus came from Gromov's revolution of geometric group theory (e.g. \cite{Gromov_ICM84,Gromov93}). Coarse geometry found many important applications in several areas of mathematics, for example (higher) index theory (e.g. \cite{Roe1993,Yu.NovikovFAD,Yu_ICM06}), topological rigidity of manifolds (e.g. \cite{GuentnerTesseraYu2012,Guentner:2013aa}), and Banach space theory (e.g. \cite{Ostrovskii:2009nx,DraniGongLaffYu}). We recommend the excellent book \cite{Nowak:2012aa} or the survey \cite{Yu.Large} for an introduction to coarse geometry. Relevant functions in this area are coarse maps and equivalences.

\begin{definition}
Let $(X,d_X)$ and $(Y,d_Y)$ be metric spaces. A function $f\colon X\to Y$ is  \emph{coarse} if for every $r>0$ there is $s>0$ such that for all $x,x'\in X$
\[
\text{ if }d_X(x,x')\leq r \text{ then }d_Y(f(x),f(x'))\leq s.
\]
If $f,g\colon X\to Y$ are two maps, we say that $f$ and $g$ are \emph{close}, and write $f\sim_{cl} g$, if 
\[
\sup_{x\in X}d_Y(f(x),g(x))<\infty.
\]
If $f\colon X\to Y$ and $g\colon Y\to X$ are coarse functions such that
\[
f\circ g\sim_{cl}Id_Y \text{ and } g\circ f\sim_{cl}Id_X,
\]
then each of $f$ and $g$ is called a \emph{coarse equivalence}, $f$ and $g$ are called \emph{mutual inverses}, and the spaces $X$ and $Y$ are said to be \emph{coarsely equivalent}. If in addition $f$ is a bijection, it is called a \emph{bijective coarse equivalence}, and the spaces are said to be \emph{bijectively coarsely equivalent}.
\end{definition}

John Roe defined in the late 1980s (\cite{Roe:1988qy}) a family of \cstar-algebras  capable of coding in operator algebraic terms the coarse behaviour of metric spaces.  These algebras were defined to study index theory of elliptic operators on noncompact manifolds, and nowadays take the name of \emph{Roe-like} algebras. They are fundamental in the formulation of the coarse Baum--Connes conjectures (asking for a certain assembly map to be an isomorphism) and consequently the Novikov conjecture in high-dimensional topology (e.g. \cite{higson2002counterexamples, Yu1995,Yu2000,HigsonRoe1995,Yu.NovikovFAD} and \cite{Guentner:2022hc}) and found profound applications in index theory (e.g. \cite{Spakula:2009tg,Engel:2018vm}), \cstar-algebra theory (e.g. \cite{Rordam:2010kx,Li:2017ac}), single operator theory (e.g. \cite{Rabinovich:2004xe,SpakulaWillett2017}), topological dynamics (e.g. \cite{Kellerhals:2013aa,Brodzki:2015kb}), and mathematical physics (e.g. \cite{Cedzich:2018wx,Kubota2017}). 

Chief among Roe-like algebras is the \emph{uniform Roe algebra}.  Let $(X,d)$ be a u.l.f.\ metric space. The \emph{propagation} of an $X$-by-$X$-matrix of complex numbers $a=[a_{x,x'}]_{x,x'\in X}$ is the quantity
\[
\propg(a)=\sup \{d(x,x')\mid a_{x,x'}\neq 0\}.
\]
If $a$ has finite propagation and uniformly bounded entries, by uniform local finiteness $a$ defines a bounded linear operator on the complex Hilbert space $\ell_2(X)$. Finite propagation operators form a $^*$-algebra, which we now close.
\begin{definition}
Let $(X,d)$ be a u.l.f.\ metric space. The \emph{uniform Roe algebra} of $X$, denoted by $\cstu(X)$, is the norm closure of the algebra of finite propagation operators.
\end{definition}

Other important Roe-like algebras are the Roe algebra $\mathrm{C}^*(X)$ (constructed by considering locally compact finite propagation operators on $\ell_2(X,H)$, where $H$ is a complex infinite-dimensional separable Hilbert space), the quasi-local algebra $\cstql(X)$ (Definition~\ref{def:quasi-local}), and quotient Roe-like algebras such as the uniform Roe corona $\roeq(X)$ and the Higson corona $\mathrm{C}_\nu(X)$ (Definition~\ref{def:Higsoncorona}). Each of these is capable of coding different coarse information about the spaces of interest.

An important area of research aims to build a vocabulary between coarse geometry and Roe-like algebras, and thus to understand how much geometry is remembered by these \cstar-algebras. The centrepiece of this program is represented by \emph{rigidity problems}, asking for a correspondence between isomorphisms in the coarse category and those in the category of \cstar-algebras. The following diagram summarises well-known implications:
\[
\begin{tikzcd}[arrows=Rightarrow]
X\sim_{bij.c.eq.}Y\arrow[r]\arrow[d]&\cstu(X) \cong \cstu(Y)\arrow[d] \\
 X \sim_{c.eq.} Y\arrow [r]&\mathrm{C}^*(X)\cong \mathrm{C}^*(Y).
\end{tikzcd}
\]
Here the equivalence relations $\sim_{c.eq.}$ and $\sim_{bij.c.eq.}$ denote `being coarsely equivalent' and `being bijectively coarsely equivalent', respectively. The rigidity problems ask for the horizontal implications in the above diagram to be reversed.

\begin{problem}[Rigidity of (uniform) Roe algebras]\label{prob:main}
Prove that
\begin{itemize}
\item if two uniform Roe algebras associated to u.l.f.\ metric spaces are $^*$-isomorphic, then the underlying spaces are bijectively coarsely equivalent;
\item if two Roe algebras associated to u.l.f.\ metric spaces are $^*$-isomorphic, then the underlying spaces are coarsely equivalent.
\end{itemize}
\end{problem}

A similar diagram can be drawn for Roe-like algebras arising as quotients, like uniform Roe  and Higson coronas; in this case, reverse implications (and thus solutions to the associated rigidity problems) are subject to the set theoretic ambient. We direct the reader to  \cite{BragaFarahV.Roecoronas}, \cite{BrianFarah} and \cite{Vignati.Higson} (see also the introduction of \cite{Vignati.HDR} or of \cite[\S10]{CoronaRigidity}) for a thorough discussion on quotient Roe-like algebras and the associated rigidity problems. 

Much research has been dedicated to our rigidity problems over the last 15 years. At first, the rigidity problems were analysed for property A spaces. Property A is a well-behavedness condition on the metric spaces of interest, introduced and studied by Yu for Baum--Connes purposes. Property A is an `amenability' like condition (for example, in case of finitely generated groups, property A is equivalent to exactness, and therefore all amenable groups have property A), and it is equivalent to many regularity properties stated in either algebraic or geometric terms. Notably, property A is equivalent to the uniform Roe algebra being a nuclear (equivalently amenable) \cstar-algebra (e.g. \cite[Theorem 5.3]{SkandalisTuYu2002} or \cite[Theorem 5.5.7]{BrownOzawa}). In the seminal \cite{SpakulaWillett2013AdvMath} \v{S}pakula and Willett showed that in presence of property A if two Roe algebras are isomorphic then the underlying spaces are coarsely equivalent, solving the rigidity problem for Roe algebras in this setting. Again in the realm of property A spaces, the rigidity problem for uniform Roe algebras was solved in
\cite{WhiteWillett2017}, with a very sharp use of the technical Operator Norm Localisation property (ONL, an equivalent of property A as showed in \cite{Sako2014}) of Chen, Tessera, Wang, and Yu (\cite{ChenTesseraWangYu2008}). Incremental progress in weakening the hypotheses on the spaces involved, and at the same time in developing key ideas and techniques was made in \cite{BragaFarah2018}, which contains remarkable results on uniform approximability, and other relevant results  were proved in \cite{LiSpakulaZhang2020,BragaFarahVignati2019} and \cite{BragaFarahVignati2020AnnInstFour}. 

A breakthrough was made in \cite{SquareInventiones}. There, the authors showed that, unconditionally on the geometry of the spaces of interest, isomorphism of uniform Roe algebras implies coarse equivalence between the associated u.l.f.\ metric spaces, and that the latter is equivalent to the uniform Roe algebras being Morita equivalent. Inspired by this work, in \cite{MartinezVigolo} Mart\'inez and Vigolo fully solved the rigidity problem for Roe algebras, giving at the same time a neater and simpler proof of the results of \cite{SquareInventiones}. Lastly, in the recent \cite{SquareBij}, we solved the rigidity problem for uniform Roe algebras for coarse disjoint unions of expander graphs, prototypical spaces to which previous considerations did not apply. For this result, we used methods of uniformly finite homology as developed in \cite{Block:1992qp} and \cite{Whyte99}, already considered in \cite{WhiteWillett2017} to obtain intermediate results for nonamenable metric spaces.

In this article, we give the ultimate solution to rigidity problems.
\begin{theoremi}\label{thmi:main}
Let $(X,d_X)$ and $(Y,d_Y)$ be uniformly locally finite metric spaces. If the uniform Roe algebras $\cstu(X)$ and $\cstu(Y)$ are isomorphic, then $X$ and $Y$ are bijectively coarsely equivalent.
\end{theoremi}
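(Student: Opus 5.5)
Start from an isomorphism $\Phi\colon\cstu(X)\to\cstu(Y)$. By the known result that isomorphisms of uniform Roe algebras are spatially implemented, there is a unitary $U\colon\ell_2(X)\to\ell_2(Y)$ with $\Phi=\Ad U$. By \cite{SquareInventiones,MartinezVigolo}, $X$ and $Y$ are coarsely equivalent. The plan is to upgrade this to a bijective coarse equivalence. By the Whyte-type criterion from uniformly finite homology (\cite{Block:1992qp,Whyte99}, as used in \cite{SquareBij}), a coarse equivalence $f\colon X\to Y$ between u.l.f.\ spaces is close to a bijection if and only if a Hall-type marriage condition holds. Concretely, it suffices to find $r>0$ such that for every finite $A\subseteq X$ we have $|A|\le|N_r(f(A))|$, where $N_r$ denotes the $r$-neighbourhood, and symmetrically for $Y$ with a coarse inverse $g$. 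Hall's marriage theorem, together with a König-type compactness argument, then produces the bijection. So the whole proof reduces to proving these counting inequalities from the existence of $U$.

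To get the inequalities, I would use $U$ to compare dimensions. For finite $A\subseteq X$, the projection $\chi_A$ has rank $|A|$, and $U\chi_AU^*=\Phi(\chi_A)$ is a rank-$|A|$ projection in $\cstu(Y)$. The key input I would prove or import is uniform approximability with a localisation. For every $\varepsilon>0$ there should be $r>0$, independent of $A$, such that $\|\Phi(\chi_A)-\chi_{N_r(f(A))}\Phi(\chi_A)\chi_{N_r(f(A))}\|<\varepsilon$. Here $f$ is the coarse equivalence built in \cite{MartinezVigolo}: for each $x$, $f(x)$ is a point $y$ with $\|\chi_{\{y\}}U\delta_x\|\ge\delta$ for a fixed $\delta>0$. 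Granting this, $\Phi(\chi_A)$ is within $\varepsilon<1$ of an operator supported on $\ell_2(N_r(f(A)))$. The compression $\chi_{N_r(f(A))}\Phi(\chi_A)\chi_{N_r(f(A))}$ is then injective on the range of $\Phi(\chi_A)$, so $|A|=\rank\Phi(\chi_A)\le|N_r(f(A))|$. The same argument applied to $\Phi^{-1}=\Ad U^*$ and $g$ gives the symmetric inequality.

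The main obstacle is the uniformity of $r$ in the localisation estimate, uniformly over all finite $A$. For a single $A$ the estimate follows from the fact that $\Phi(\chi_A)$ lies in $\cstu(Y)$ and can be approximated by finite propagation operators. Uniformity across all $A$ is exactly where earlier work needed property A or ONL. My first attempt would be an argument by contradiction in the spirit of \cite{BragaFarah2018}. Suppose the estimate fails. Then there are finite sets $A_n$ and radii $r_n\to\infty$ violating it with fixed $\varepsilon$. Passing to a sparse subsequence, the $A_n$ can be taken far apart, so that $\sum_n\chi_{A_n}$ converges strongly to $\chi_{\bigcup_n A_n}$, which lies in $\cstu(X)$. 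Its image $\Phi(\chi_{\bigcup_n A_n})$ lies in $\cstu(Y)$ and thus has an approximant of some finite propagation $s$. That approximant then localises each $\Phi(\chi_{A_n})$ near $f(A_n)$ with a radius depending only on $s$ and the coarse constants of $f$, contradicting $r_n\to\infty$. Making this compression argument rigorous requires that the $\Phi(\chi_{A_n})$ be asymptotically orthogonal and spatially separated in $Y$. That should follow from $f$ being coarse and from $\sum_n\chi_{A_n}$ being a strong sum of commuting projections, but I expect the careful rank and norm bookkeeping at this step to be the heart of the proof.
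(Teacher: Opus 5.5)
Your reduction is sound and matches the paper's. Your localisation estimate is essentially \eqref{thegoal}, since $\norm{(1-\chi_S)P}\le\norm{P-\chi_SP\chi_S}\le 2\norm{(1-\chi_S)P}$ for a projection $P$. The rank count, Hall's theorem and Cantor--Schr\"oder--Bernstein are exactly Lemmas~\ref{lemma:hall} and~\ref{lemma:alphabeta}.

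\textbf{The gap.} It sits in the one step that carries all the difficulty, the uniformity of $r$ over $A$. There is a minor point and a fatal one.
\begin{itemize}
\item Passing to a subsequence does not make the $A_n$ far apart, since they may all contain a fixed point. You first need to push counterexamples off every finite set, as in Lemma~\ref{lem:deltahalf}, using the uniform single-point bound of Lemma~\ref{lem:Inv}. This is repairable.
\item The real problem is your claim that a finite-propagation approximant of $\Phi(\chi_{\bigcup_n A_n})=\sum_n\Phi(\chi_{A_n})$ localises each summand near $f(A_n)$. Approximate finite propagation of a sum says nothing about where its orthogonal summands live: the projection $1$ has propagation $0$ yet splits into orthogonal subprojections that are arbitrarily delocalised. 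Even using all subsums $\sum_{n\in M}\Phi(\chi_{A_n})$, uniform approximability only gives that each $\Phi(\chi_{A_n})$ is close to an operator of propagation at most $s$, not that it sits near $f(A_n)$.
\end{itemize}
The spatial separation you hope to extract from coarseness of $f$ is the conclusion itself. The pointwise bounds $\norm{(1-\chi_S)\Phi(\chi_x)}<\delta$ for $x\in A$ do not add up, because $\norm{(1-\chi_S)\Phi(\chi_A)}^2=\norm{\sum_{x\in A}(1-\chi_S)\Phi(\chi_x)(1-\chi_S)}$ can grow with $|A|$. This is precisely where ONL was needed before, and no amount of bookkeeping closes it without a new input.

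\textbf{The missing idea.} You need a device that reduces large $A$ to single points. The paper uses the identity $\chi_A=g_{A,r}\chi_A$, which gives $\norm{(1-\chi_S)\Phi(\chi_A)}\le\norm{(1-\chi_S)\Phi(g_{A,r})}$. It then proves (Proposition~\ref{prop:Higson2}) that for $A$ off a finite set and suitable $r$, $\Phi(g_{A,r})$ is uniformly within $\delta/8$ of $\mathbb E_Y(\Phi(g_{A,r}))\in\ell_\infty(Y)$.

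That proof is a diagonal argument much like yours, but run on the flattened functions $g_{A_n,n}$ instead of $\chi_{A_n}$. Every subsum $\sum_{n\in M}g_{A_n,n}$ is slowly oscillating, so its class lies in the Higson corona, which is the centre of $\roeq(X)$ (Proposition~\ref{prop:Higsoncenter}). The corona isomorphism induced by $\Phi$ preserves centres, so every $\sum_{n\in M}\Phi(g_{A_n,n})$ lies in $\ell_\infty(Y)+\mathcal K(\ell_2(Y))$. Lemma~\ref{lem:SOT} then forces $\norm{\mathbb E_Y(\Phi(g_{A_n,n}))-\Phi(g_{A_n,n})}\to 0$. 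Your $\chi_{\bigcup_n A_n}$ is not slowly oscillating, so nothing constrains its image this way; the flattening is essential.

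Once $\Phi(g_{A,r})$ is nearly diagonal, $\norm{(1-\chi_S)\Phi(g_{A,r})}$ is essentially a supremum over single points $y\notin S$. This yields $\norm{\chi_{B_r(A)}\Phi^{-1}(\chi_y)}>\delta/4$. The uniform single-point bound for $\Phi^{-1}(\chi_y)$ then places $y$ within $m$ of $Y_{A,\varepsilon}$, a contradiction (Lemma~\ref{lemma:itholds}).
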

Our proof gives the same result for isomorphisms between quasi-local algebras.
We postpone the discussion of the proof of Theorem~\ref{thmi:main} to \S\ref{ss:strat}, and list a few corollaries of our main result which generalise results from \cite{WhiteWillett2017} and \cite{BragaVignatiGelfand} outside of the property A setting. 

The first corollary is about the uniqueness of certain Cartan subalgebras in uniform Roe algebras. Cartan subalgebras are present in the study of operator algebras since the seminal work of Murray and von Neumann. The formal notion of Cartan subalgebra was defined in the von Neumann setting in \cite{Vershik:1971aa} to abstract the concrete properties of the inclusion $L_\infty(X,\mu)\subseteq L_\infty(X,\mu)\rtimes G$ when $G$ is a group acting on a measure space $(X,\mu)$. Cartan algebras were brought to the \cstar-setting by the work of Renault (\cite{Renault2008Cartan}), building on Kumjian's \cstar-diagonals (\cite{kumjian1986c}), to model \cstar-algebraically the inclusion of the unit space of a groupoid $G$ into the groupoid itself. 

Cartan subalgebras proved to be fundamental tools for the development of the theory of both von Neumann and \cstar-algebras. A key goal is often to prove existence and uniqueness (up to isomorphism, or even up to unitary equivalence) theorems. To name a few important results linked to Cartan subalgebras, in the von Neumann setting the quest for existence and uniqueness theorems largely motivated  Popa's rigidity/deformation theory, see e.g. \cite{Ozawa:2010sz,Popa:2007fk,Popa:2014cu,Popa:2014jq}. In the \cstar-setting existence of Cartan subalgebras (in addition to nuclearity) implies the algebras involved are isomorphic to groupoid \cstar-algebras and consequently satisfy the Universal Coefficient Theorem of Rosenberg and Schochet (\cite{Rosenberg:1987bh}); this ties Cartan subalgebras to the classification programme (\cite{Li:2018yv,LiRenault,Carlsen:2017aa}). 

The study of Cartan subalgebras in uniform Roe algebras was formally initiated in \cite{WhiteWillett2017}. For a u.l.f.\ metric space $(X,d)$, the uniform Roe algebra $\cstu(X)$ has a canonical Cartan subalgebra given by $\ell_\infty(X)$, the algebra of of propagation zero operators (those diagonalised by the basis of $\ell_2(X)$ consisting of Dirac delta functions on elements of $X$, ($\delta_x)_{x\in X}$). Abstracting the algebraic properties of the inclusion $\ell_\infty(X)\subseteq\cstu(X)$ led to the definition of Roe Cartan subalgebra (see Definition~\ref{def:RoeCartan}). One of the main results of \cite{WhiteWillett2017} (Theorem B in there) shows that every Roe Cartan pair (that is, an inclusion of \cstar-algebras $A\subseteq B$ where $A$ is a Roe Cartan subalgebra in $B$) is isomorphic to the inclusion $\ell_\infty(X)\subseteq\cstu(X)$ for some u.l.f.\ metric space $X$. As for uniqueness, even though uniform Roe algebras might contain Cartan subalgebras not isomorphic to $\ell_\infty$ (see \cite[\S3]{WhiteWillett2017}, White and Willett showed in \cite[Theorem E]{WhiteWillett2017} that for property A spaces every Roe Cartan subalgebra of $\cstu(X)$ is unitarily equivalent to $\ell_\infty(X)$. This is the strongest uniqueness result available, which we generalise unconditionally on the geometry of the spaces of interest.

\begin{theoremi}\label{thmi:main2}
Let $X$ be a u.l.f.\ metric space and suppose that $A\subseteq \cstu(X)$ is a Roe Cartan subalgebra. Then there is a unitary $v\in\cstu(X)$ such that $vAv^*=\ell_\infty(X)$.
\end{theoremi}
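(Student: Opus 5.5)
Theorem~\ref{thmi:main2} should follow by combining the structure theorem of White and Willett for Roe Cartan pairs with Theorem~\ref{thmi:main} and its proof. By \cite[Theorem B]{WhiteWillett2017}, the Roe Cartan pair $A\subseteq\cstu(X)$ is isomorphic to a canonical pair $\ell_\infty(Y)\subseteq\cstu(Y)$ for some u.l.f.\ metric space $Y$. This gives a $^*$-isomorphism $\Phi\colon\cstu(Y)\to\cstu(X)$ with $\Phi(\ell_\infty(Y))=A$. By Theorem~\ref{thmi:main}, $X$ and $Y$ are then bijectively coarsely equivalent. Let $f\colon Y\to X$ be such a bijection.

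Next I would use the standard fact that isomorphisms of uniform Roe algebras are spatially implemented. Since $\cstu(X)$ contains the compact operators on $\ell_2(X)$ as an essential ideal, there is a unitary $u\colon\ell_2(Y)\to\ell_2(X)$ with $\Phi=\Ad u$. The bijection $f$ gives a second unitary $u_f\colon\ell_2(Y)\to\ell_2(X)$, defined by $\delta_y\mapsto\delta_{f(y)}$. Because $f$ is a bijective coarse equivalence, $\Ad u_f$ is an isomorphism $\cstu(Y)\to\cstu(X)$ carrying $\ell_\infty(Y)$ onto $\ell_\infty(X)$.

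Set $v=u_fu^*$, a unitary on $\ell_2(X)$. Then $vAv^*=u_f\ell_\infty(Y)u_f^*=\ell_\infty(X)$, so the only thing left to prove is $v\in\cstu(X)$. Since $\Ad v$ maps $\cstu(X)$ onto itself, $v$ at least normalises $\cstu(X)$, but that alone is not enough: an automorphism of $\cstu(X)$ need not be inner.

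The main obstacle is therefore to choose $f$ compatibly with $u$, so that $u_f$ and $u$ differ by an element of the algebra. The plan is to extract from the proof of Theorem~\ref{thmi:main}, not just its statement, that the bijection can be built from $u$ itself. Concretely, I would aim to show that $f$ can be chosen so that
\[
\inf_{y\in Y}\norm{u\delta_y\text{ restricted to a bounded neighbourhood of }f(y)}>0,
\]
that is, $u\delta_y$ is uniformly concentrated near $f(y)$. Such a bijection should arise as a perfect matching, via a Hall marriage argument, in the bipartite relation between $y$ and the points $x$ where $|\langle u\delta_y,\delta_x\rangle|$ is large.

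Given this concentration, I would show that the matrix entries $\langle v\delta_x,\delta_{x'}\rangle=\langle u^*\delta_x,\delta_{f^{-1}(x')}\rangle$ decay uniformly with $d(x,x')$. The tool is the uniform approximability results of \cite{BragaFarah2018}, which say that $u$ is approximately coarse-like: for every $\varepsilon>0$ there is $r>0$ such that $u$ is within $\varepsilon$ of $r$-propagation relative to a coarse map. Composing this with $f^{-1}$ puts $v$ in $\cstql(X)$. To upgrade to $v\in\cstu(X)$, I would either invoke the result that $\cstu(X)$ and $\cstql(X)$ coincide for the operators at hand, or, more robustly, deduce membership from $v\cstu(X)v^*=\cstu(X)$ together with quasi-locality. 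I expect this final identification, making sure $v$ lies in $\cstu(X)$ and not merely in the multiplier or quasi-local algebra, to be the delicate point, and I would check whether the proof of Theorem~\ref{thmi:main} yields it directly.
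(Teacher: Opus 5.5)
Your outline matches the paper's proof. Theorem~B of \cite{WhiteWillett2017} gives $Y$ and a unitary $u$ with $\Ad u$ carrying $\ell_\infty(Y)\subseteq\cstu(Y)$ onto $A\subseteq\cstu(X)$. The bijection $f$ is taken from the proof of Theorem~\ref{thm:main}, so it is compatible with $u$, and $v$ is the permutation unitary of $f$ composed with $u^*$. You also rightly note that $v\cstu(X)v^*=\cstu(X)$ does not by itself make $\Ad v$ inner.

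\textbf{The genuine gap} is the step you leave open: upgrading $v\in\cstql(X)$ to $v\in\cstu(X)$.
\begin{itemize}
\item Your first option, that $\cstu(X)$ and $\cstql(X)$ agree for the operators at hand, is unavailable. Equality $\cstu(X)=\cstql(X)$ is exactly what White and Willett use under property A. In general the inclusion can be proper \cite{Ozawa2023uRaSmallerQL}, and you give no reason why $v$ should land where they agree.
\item Your second option is the correct statement, but it is a real theorem, not a formality. It is the one ingredient the paper adds to White--Willett (Proposition~\ref{prop:inner}): a unitary in $\cstql(X)\setminus\cstu(X)$ cannot normalise $\cstu(X)$.
\end{itemize}
The paper proves this by amplification. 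Set $V=v\otimes 1_{\mathcal B(H)}$. Then $V$ normalises the Roe algebra $\mathrm{C}^*(X)$, hence its multiplier algebra $\mathrm{BD}(X)$, and $V$ is quasi-local on $\ell_2(X,H)$. A slice map $L_\psi$ preserves finite propagation and satisfies $L_\psi(V)=v$, so $V\notin\mathrm{BD}(X)$ whenever $v\notin\cstu(X)$. Finally, Mart\'inez--Vigolo's theorem (Theorem~\ref{thm:MVQL}) says a quasi-local unitary outside $\mathrm{BD}(X)$ cannot normalise $\mathrm{BD}(X)$. Without this, your argument stops at $v\in\cstql(X)$.

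\textbf{A second problem} is that the tool you propose for quasi-locality does not work. In your notation,
\[
\norm{\chi_C v\chi_D}=\norm{\chi_{f^{-1}(C)}u^*\chi_D}=\norm{\chi_{f^{-1}(C)}\Phi^{-1}(\chi_D)}.
\]
So quasi-locality of $v$ says exactly this: for every $\delta>0$, the projections $\Phi^{-1}(\chi_D)$ are $\delta$-concentrated on a fixed-radius neighbourhood of $f^{-1}(D)$, uniformly over all $D\subseteq X$.
\begin{itemize}
\item Concentration of each single vector $u^*\delta_x$ near $f^{-1}(x)$ (as in Lemma~\ref{lem:Inv}) does not give this, because the small tails of many vectors can add up in norm.
\item The uniform approximability of \cite{BragaFarah2018} does not give it either. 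It only says that $\Phi^{-1}(\chi_D)$ is uniformly close to operators of bounded propagation, not where those operators live.
\item The ``approximate coarse-likeness of $u$ relative to a coarse map'' you attribute to that work is equivalent to the conclusion you want, not a consequence of it.
\end{itemize}
The input actually needed is \eqref{thegoal} for every $\delta\in(0,1)$, extended from finite to arbitrary sets by strong continuity. That is Lemma~\ref{lemma:itholds}, the Higson-corona estimate that replaces ONL in White--Willett's proof of Theorem~E.
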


Our second corollary generalises one of the main results of \cite{BragaVignatiGelfand}. There, Braga and the author were able to use the content of \cite{WhiteWillett2017} to prove a Gelfand-duality type theorem which links $\mathrm{Out}(\cstu(X))$, the group of outer automorphisms of  the uniform Roe algebra (that is, automorphisms of $\cstu(X)$ modulo inner ones) with $\mathrm{BijCoa}(X)$, the group of bijective coarse equivalences of $X$ modulo closeness. Given a bijective coarse equivalence $f\colon X\to X$ one associates canonically an automorphism of $\cstu(X)$ by permuting the standard basis of $\ell_2(X)$. This association induces an injective group homomorphism
\[
\mathrm{BijCoa}(X)\to\mathrm{Out}(\cstu(X)),
\]
which in the property A setting is showed to be an isomorphism (\cite[Theorem A]{BragaVignatiGelfand}). Once again, we remove all geometric constraints.
\begin{theoremi}\label{thmi:main3}
Let $X$ be a u.l.f.\ metric space. Then the canonical homomorphism
\[
\mathrm{BijCoa}(X)\to\mathrm{Out}(\cstu(X))
\]
is an isomorphism.
\end{theoremi}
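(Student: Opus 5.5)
Given Theorem~\ref{thmi:main}, surjectivity is the only content; injectivity is stated as known from \cite{BragaVignatiGelfand}. I would reduce surjectivity to two standard facts. The first is that every isomorphism between uniform Roe algebras is spatially implemented (\cite{SpakulaWillett2013AdvMath}; see also \cite{BragaFarah2018}). The second is the intermediate result from the proof of Theorem~\ref{thmi:main}: a spatial isomorphism yields a bijective coarse equivalence that, in a suitable sense, approximately tracks the implementing unitary.

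Fix $\Phi\in\Aut(\cstu(X))$. By the spatial implementation fact, there is a unitary $u\colon\ell_2(X)\to\ell_2(X)$ with $\Phi=\Ad(u)$. The proof of Theorem~\ref{thmi:main}, applied with $Y=X$, should produce a bijection $f\colon X\to X$ that is a coarse equivalence. I would arrange the construction so that $f(x)$ is chosen among points $y$ with $|\langle u\delta_x,\delta_y\rangle|$ bounded below, uniformly in $x$; this is the uniform lower bound coming from the rigidity machinery of \cite{SquareInventiones,MartinezVigolo}, followed by a Hall-marriage type matching to obtain a bijection. Let $v_f$ be the permutation unitary $\delta_x\mapsto\delta_{f(x)}$. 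Then $\Ad(v_f)$ is the automorphism assigned to $[f]$ by the canonical map. It remains to show that $w=v_f^*u$ lies in $\cstu(X)$; this gives $\Phi=\Ad(v_f)\circ\Ad(w)$, so $[\Phi]$ is the image of $[f]$.

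For the last step, note that $\Ad(w)=\Ad(v_f)^{-1}\circ\Phi$ is an automorphism of $\cstu(X)$. Since $f$ is coarse, $\Ad(w)$ induces a bijective coarse equivalence close to the identity. I would then prove that an automorphism $\Ad(w)$ whose associated coarse map is close to $\mathrm{Id}_X$ is inner, i.e.\ $w\in\cstu(X)$. The plan is to show that $w$ is quasi-local, using the uniform control on which matrix entries $\langle w\delta_x,\delta_y\rangle$ are large together with the fact that $w$ maps $\ell_\infty(X)$ into $\cstu(X)$. From there, either pass through the quasi-local algebra (the paper notes the argument also works for quasi-local algebras) or apply the uniform approximability results of \cite{BragaFarah2018} to place $w$ in $\cstu(X)$.

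I expect this last step to be the main obstacle. Without property A, quasi-locality does not imply membership in $\cstu(X)$. The step I would try first is to exploit that $w\,\chi_{\{x\}}\,w^*$ and $w^*\chi_{\{x\}}w$ are uniformly concentrated near $x$, and that multiplication by $w$ sends finite propagation operators into $\cstu(X)$ in a uniformly approximable way. Combining these, I would approximate $w$ in norm by sums $\sum_x \chi_{B_r(x)}\,w\,\chi_{\{x\}}$ with $r$ uniform in $x$. If this fails, I would instead use the Roe Cartan uniqueness of Theorem~\ref{thmi:main2} to conjugate by an element of $\cstu(X)$ and reduce to the case where $w$ normalises $\ell_\infty(X)$. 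Such a $w$ is a diagonal unitary times a permutation close to the identity, and is therefore in $\cstu(X)$ directly.
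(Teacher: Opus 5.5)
Your fallback is correct and is the paper's own proof. $\Phi[\ell_\infty(X)]$ is a Roe Cartan subalgebra, so Theorem~\ref{thmi:main2} gives a unitary $t\in\cstu(X)$ with $\Ad(t)\circ\Phi$ preserving $\ell_\infty(X)$. By spatiality, $\Ad(t)\circ\Phi$ is implemented by a unitary normalising $\ell_\infty(X)$, hence a diagonal unitary times the permutation unitary of a bijective coarse equivalence (\cite[Lemma 8.10]{BragaFarah2018}). Taken this way, the preliminary construction of $f$ via Theorem~\ref{thmi:main} is superfluous: apply Theorem~\ref{thmi:main2} directly to $\Phi[\ell_\infty(X)]$. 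In your version you also do not need the permutation to be close to the identity: if $t\in\cstu(X)$ and $tw=Dv_g$ with $D$ diagonal, then $[\Phi]$ is already the image of $[f\circ g]$.

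Your primary route, as written, has a genuine gap at the step you flag, $w\in\cstu(X)$, and the tools you suggest would not close it.

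First, quasi-locality of $w$ amounts to $\norm{(1-\chi_{B_R(A)})w\chi_A}<\delta$ uniformly over \emph{all} $A\subseteq X$. This does not follow from entrywise control, nor from concentration of the single projections $w\chi_{\{x\}}w^*$ (that is Lemma~\ref{lem:Inv}). After conjugating by $v_f$, it is precisely \eqref{thegoal} for every $\delta$, i.e.\ Lemma~\ref{lemma:itholds}, the main technical result of the paper.

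More seriously, nothing you list passes from $\cstql(X)$ to $\cstu(X)$:
\begin{itemize}
\item Norm convergence of the band truncations $\sum_x\chi_{B_r(x)}w\chi_{\{x\}}$ is strictly stronger than membership in $\cstu(X)$. For $X=\mathbb Z$ and the Laurent unitary with continuous unimodular symbol $\phi$, the truncation error is $\norm{\phi-S_r\phi}_\infty$, with $S_r$ the $r$-th Fourier partial sum, and this need not tend to $0$.
\item The uniform approximability results of \cite{BragaFarah2018} concern the images $\Phi(a)$ and hold for every automorphism, outer ones included. They do not place the implementing unitary in $\cstu(X)$.
\item ``Passing through the quasi-local algebra'' gives at most $w\in\cstql(X)$.
\end{itemize}
The missing ingredient is Proposition~\ref{prop:inner}: a quasi-local unitary $w$ with $\Ad(w)[\cstu(X)]=\cstu(X)$ lies in $\cstu(X)$. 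Its proof is not an approximation argument. It tensors with $1_{\mathcal B(H)}$, uses that $\mathrm{BD}(X)$ is the multiplier algebra of $\mathrm{C}^*(X)$, and combines Theorem~\ref{thm:MVQL} of Mart\'inez--Vigolo with a slice map.

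Your $w$ satisfies these hypotheses, since $\Ad(w)=\Ad(v_f)^{-1}\circ\Phi$. So with Lemma~\ref{lemma:itholds} for quasi-locality and Proposition~\ref{prop:inner} for the last step, your primary route does go through. It is a modest but genuine variant of the paper's argument. It bypasses the abstract realisation of Roe Cartan pairs from \cite{WhiteWillett2017}, because here $\Phi[\ell_\infty(X)]$ is already spatially realised by $u$. In effect it runs the proof of Theorem~\ref{thm:Cartan} with $Y=X$.

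A minor point: the proof of Theorem~\ref{thmi:main} only places $f(x)$ in $B_m(Y_{x,\varepsilon})$, not among points with large coefficient $|\langle u\delta_x,\delta_y\rangle|$. This is harmless, since only the closeness class of $f$ matters.
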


The article is structured as follows: in \S\ref{ss:strat} we sketch the strategy of the proof of our main result. In \S\ref{S.HigsonPrel} we formally introduce the Higson corona, one of our main tools, and, after a few preparatory lemmas, we show that an isomorphism between two uniform Roe algebras $\cstu(X)$ and $\cstu(Y)$ must send certain `flattened' indicator functions close to $\ell_\infty(Y)$, the canonical Cartan masa of $\cstu(Y)$. This is one of the main ideas that will be used in the proof of Theorem~\ref{thmi:main}, which is contained in \S\ref{S.mainproof}. In \S\ref{S.CorConcl} we focus on Cartan subalgebras in uniform Roe algebras and automorphisms thereof, proving Theorems~\ref{thmi:main2} and \ref{thmi:main3}. The article ends with concluding remarks and open questions.

\subsection{The strategy}\label{ss:strat}
We describe the main ideas behind the proof of Theorem~\ref{thmi:main}. Fix two u.l.f.\ metric spaces $X$ and $Y$, and let $\Phi\colon \cstu(X)\to\cstu(Y)$ be a  $^*$-isomorphism. With the aid of $\Phi$, we construct a function
\[
\alpha\colon X\to \Fin(Y),
\]
where $\Fin(Y)$ is the set of all finite subsets of $Y$,  in such a way that any function $f\colon X\to Y$ with the property that $f(x)\in \alpha(x)$ for all $x\in X$ is a coarse equivalence. This is the approach of \cite{SpakulaWillett2013AdvMath} (and \cite{SquareInventiones}, and \cite{MartinezVigolo}). White and Willett in \cite{WhiteWillett2017} noticed that, provided that $\alpha$ is constructed in such a way that for every $A\subseteq X$
\begin{equation}\label{eq:Hall}\tag{$\ast$}
|A|\leq \Big|\bigcup_{x\in A}\alpha(x)\Big|,
\end{equation}
one can then apply Hall's marriage theorem and obtain an injective coarse equivalence $f\colon X\to Y$. Symmetrically, one constructs (using $\Phi^{-1}$) an injective coarse equivalence $g\colon Y\to X$. The construction ensures that $f$ and $g$ are mutual coarse inverses, and a simple application of the proof of Cantor--Schr\"oder--Bernstein theorem gives a bijective coarse equivalence. 

The main point is thus to construct a function $\alpha\colon X\to \Fin(Y)$ satisfying Equation~\eqref{eq:Hall}.
In \cite{WhiteWillett2017}, the validity of Equation~\eqref{eq:Hall}  was ensured by proving a specific norm estimate (see Equation~\eqref{thegoal} below). Such norm estimate was shown to hold with  the aid of the technical ONL, an equivalent of property A. Even though a simpler proof of the validity of such norm estimate, and consequently of Equation~\eqref{eq:Hall}, is given in \cite[Proposition 3.16]{Vignati.HDR}  (still in the property A setting), the proofs of \cite[Lemmas 6.6 and 6.8]{WhiteWillett2017} and \cite[Proposition 3.16]{Vignati.HDR} are not replicable without assuming some form of regularity for the spaces of interest. (An unsuccessful attempt led to the results of \cite{BragaFarahVignati.ONL}.) One of the main technical arguments of this article aims to prove the aforementioned norm estimate; this is achieved with a completely new approach, focused on the algebra of slowly oscillating functions and the Higson corona.

The main new idea is to leverage that an isomorphism between $\cstu(X)$ and $\cstu(Y)$ induces an isomorphism between the Higson coronas $\mathrm{C}_{\nu}(X)$ and $\mathrm{C}_{\nu}(Y)$, as the Higson corona is the center of the uniform Roe corona, as proved in \cite[Proposition 3.6]{SquareEmb}. This observation allows to show that the isomorphism $\Phi$ sends slowly oscillating functions on $X$ to compact perturbations of slowly oscillating functions on $Y$. A diagonalisation argument (inspired by the fine analysis of Higson coronas conducted in \cite[\S3]{Vignati.Higson}) then gives that specific `flattened' indicator functions in $\ell_\infty(X)$ must be sent close to $\ell_\infty(Y)$ (Proposition~\ref{prop:Higson1}). This fact is then sharply used in Lemma~\ref{lemma:itholds} to obtain the required norm estimate and consequently prove that our guessing function $\alpha$ can be constructed to satisfy Equation~\eqref{eq:Hall}.

We stress that this approach is completely new, and it has the potential to be applied to other important questions in the realm of Roe-like algebras. For example, it is conceivable this approach can be used to study the relationship between embedding between uniform Roe algebras and coarse embeddings between the associated u.l.f.\ metric spaces (see \cite{BragaFarahVignati2019}). More importantly, we believe that (refinements of) these techniques may be relevant for the \emph{dimension problem}, asking whether coarse geometric and \cstar-algebraic appropriate notions of dimension translate properly (see e.g. \cite[\S8]{Winter:2010eb} and \cite[\S7]{li2023diagonal}).

\subsection*{Acknowledgements}
The author is supported by a grant from the Institut Universitaire de France and by the ANR grant ROAR (ANR-25-CE40-5029). The author is indebted to Bruno Braga, Ilijas Farah and Rufus Willett for comments on an early version of this manuscript.

\section{Slowly oscillating functions and their images}\label{S.HigsonPrel}

We record here a few preparatory results on slowly oscillating functions and their images under isomorphisms of uniform Roe algebras. 

The first lemma is about conditional expectations onto atomic maximal abelian subalgebras of $\mathcal B(H)$ where $H$ is a complex infinite-dimensional separable Hilbert space (all Hilbert spaces are complex from now on. Fix such $H$, and let $\bar e=(e_n)_{n\in\N}$ be an orthonormal basis for $H$. Let $p_n\in\mathcal B(H)$ be the rank one orthogonal projection onto $\mathbb Ce_n$. The algebra of operators diagonalised by $\bar e$, denoted by  $D(\bar e)$, is a maximal abelian subalgebra of $\mathcal B(H)$ which is generated, as a von Neumann algebra (i.e., a weakly closed \cstar-subalgebra of $\mathcal B(H)$) by the projections $\{p_n\}_{n\in\N}$. $D(\bar e)$ is isomorphic to $\ell_\infty(\N)$. We let $\mathbb E_{\bar e}\colon \mathcal B(H)\to D(\bar e)$ be the canonical conditional expectation defined by 
\[
\mathbb E_{\bar e}(a)=\sum_n p_nap_n.
\]
$\mathbb E_{\bar e}$ is a completely positive and contractive map, and $\mathbb E_{\bar e}(a)=a$ if and only if $a\in D_{\bar e}$. (For more on conditional expectations, see \cite[II.6.10]{Blackadar.OA}).

Recall that the strong operator topology on $\mathcal B(H)$ is given by pointwise norm-convergence, that is, a net of operators $T_i$ converges strongly to $T$, written $T=\SOTh (T_i)$, if $T_i\xi$ converges in norm to $T\xi$ for every $\xi\in H$. A sequence strongly converging to $0$ is called $\SOTh$null.
 
\begin{lemma}\label{lem:SOT}
Let $\bar e=(e_n)$ be an orthonormal basis for $H$, and let $(p_n)$ be the associated sequence of rank one projections. Let $(b_n)_{n\in\N}$ be an $\SOTh$null sequence of compact operators such that for every $M\subseteq\N$ we have that $\sum_{n\in M}b_n$ exists and belongs to $D(\bar e)+\mathcal K(H)$.
Then
\[
\lim_n\norm{\mathbb E_{\bar e}(b_n)-b_n}=0.
\]
\end{lemma}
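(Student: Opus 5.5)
Let $c_n = b_n - \mathbb E_{\bar e}(b_n)$ denote the off-diagonal part of $b_n$. We argue by contradiction. Suppose there are $\varepsilon>0$ and infinitely many $n$ with $\norm{c_n}>\varepsilon$. The plan is to extract a subsequence $(n_k)$ and finite sets $F_k\subseteq\N$ with the following property. Put $q_k=\sum_{m\in F_k}p_m$; then the $q_k$ are pairwise orthogonal, the block $q_k b_{n_k} q_k$ carries almost all of $b_{n_k}$, and $\norm{q_k c_{n_k} q_k}>\varepsilon/2$. For $M=\{n_k:k\in\N\}$ we will then contradict the hypothesis that $\sum_{n\in M}b_n\in D(\bar e)+\mathcal K(H)$.

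\textbf{Building the subsequence.} This is an inductive gliding hump. Suppose $n_1,\dots,n_{k-1}$ and $F_1,\dots,F_{k-1}$ have been chosen, and let $G=F_1\cup\dots\cup F_{k-1}$ with $P_G=\sum_{m\in G}p_m$. Since $P_G$ has finite rank and $b_n\to0$ strongly, both $\norm{b_nP_G}\to0$ and $\norm{P_Gb_n}\to0$. The second limit uses that $b_n^*\to0$ strongly on finite-rank vectors, which holds because $(b_n)$ is weakly null and $P_G$ has finite rank. So we may pick $n_k$ with $\norm{c_{n_k}}>\varepsilon$ and $\norm{b_{n_k}P_G},\norm{P_Gb_{n_k}}<2^{-k}\delta$.

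Next, $b_{n_k}$ is compact, so there is a finite $F_k\subseteq\N\setminus G$ such that, with $q_k=P_{F_k}$, we have $\norm{b_{n_k}-q_kb_{n_k}q_k}<2^{-k}\delta$ (up to constants). Because $\mathbb E_{\bar e}$ is contractive, this also controls the diagonal parts. Finally, we need the later blocks not to interfere with the earlier ones. This is handled by the choice at later stages: the bound $\norm{b_{n_j}P_{F_k}}$ is small for $j>k$, and earlier operators are nearly supported on their own blocks. Summing the geometric errors gives
\[
\sum_{k}b_{n_k} \;=\; \sum_k q_kb_{n_k}q_k \;+\; \text{(norm-small error)}.
\]
Strong convergence of the sum is guaranteed by hypothesis. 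We must check that the block-diagonal sum converges and that the error series is norm-summable, so its norm is at most $\delta$.

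\textbf{Deriving the contradiction.} This is the step I expect to need the most care. Suppose $T=\sum_k b_{n_k}=d+K$ with $d\in D(\bar e)$ and $K$ compact. Each $q_k$ commutes with $D(\bar e)$, so $q_kdq_k$ is diagonal and $q_kdq_k-\mathbb E_{\bar e}(q_kdq_k)=0$. Let $\Delta=1-\mathbb E_{\bar e}$, which has norm at most $2$. Then
\[
\Delta(q_kTq_k)=\Delta(q_kKq_k).
\]
The left-hand side is within about $2\delta$ of $\Delta(q_kb_{n_k}q_k)$, and this has norm at least $\varepsilon-O(2^{-k}\delta)$. On the other hand, the $q_k$ are orthogonal finite-rank projections that tend to $0$ strongly, and $K$ is compact, so $\norm{q_kKq_k}\to0$. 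Choosing $\delta<\varepsilon/10$ gives a contradiction for large $k$.

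The main technical obstacle is the bookkeeping of the cross terms $q_jb_{n_k}q_l$ with $j\neq l$ or $j\neq k$. These can be avoided entirely by working only with the compression $q_kTq_k$ and bounding $\norm{q_kb_{n_j}q_k}$ for $j\neq k$. For $j<k$ this follows from $b_{n_j}$ being nearly supported on $F_j$, which is disjoint from $F_k$. For $j>k$ it follows from $\norm{b_{n_j}P_{F_k}}<2^{-j}\delta$. In fact we then never need norm convergence of the sum of blocks, only the strong convergence of $\sum_{n\in M}b_n$ given in the hypothesis. The compression $q_kTq_k$ can then be computed termwise, because $q_k$ has finite rank.
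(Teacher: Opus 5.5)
Your argument follows the paper's route. You run a gliding hump to get increasing $n_k$ and pairwise disjoint finite blocks on which $b_{n_k}$ almost lives, then compress a diagonal-plus-compact operator to those blocks, where the compact part vanishes asymptotically. The only difference is cosmetic. The paper passes to the exactly block-diagonal operator $b=\sum_k p_{G_k}b_{n_k}p_{G_k}$, which differs from $\sum_k b_{n_k}$ by a norm-convergent series of compacts. You instead compress $T=\sum_k b_{n_k}$ directly and bound the cross terms $\|q_kb_{n_j}q_k\|$. Your termwise computation of $q_kTq_k$ (using that $q_k$ has finite rank) and the $O(\delta)$ bookkeeping are fine.

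One step is wrongly justified, and it is essential to the construction: you need both $\|b_{n_k}P_G\|$ and $\|P_Gb_{n_k}\|$ small to find $F_k$ disjoint from $G$. You claim $\|P_Gb_n\|\to0$ ``because $(b_n)$ is weakly null and $P_G$ has finite rank.'' This is false. The sequence $b_n\xi=\langle\xi,e_n\rangle e_0$ consists of rank-one operators and is SOT-null, yet $\|p_0b_n\|=1$ for all $n$. Since $\|P_Gb_n\|=\|b_n^*P_G\|$, what you actually need is that $(b_n^*)$ is SOT-null, and this must come from the subsum hypothesis. Here is one way:
\begin{enumerate}
\item For each $\xi$, the series $\sum_nb_n\xi$ is subseries convergent, hence unconditionally convergent, so its finite partial sums are bounded.
\item The uniform boundedness principle then gives $C=\sup\{\|\sum_{n\in M}b_n\|: M\in\Fin(\N)\}<\infty$.
\item Splitting a signed sum into its positive and negative parts gives $\|\sum_{n\le N}\epsilon_nb_n^*\eta\|\le 2C\|\eta\|$ for all choices $\epsilon_n=\pm1$.
\item Averaging $\|\sum_{n\le N}\epsilon_nb_n^*\eta\|^2$ over all sign choices (the cross terms cancel in a Hilbert space) yields $\sum_n\|b_n^*\eta\|^2\le4C^2\|\eta\|^2$, so $b_n^*\eta\to0$.
\end{enumerate}
The paper's proof skips the same point: it asserts $\|p_Fb_m\|\to0$ ``since $(b_n)$ is SOT-null.'' In its application this is harmless, because $b_n^*=\Phi(a_n^*)$ is SOT-null as well. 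With this step inserted, your proof is complete.
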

\begin{proof}
By contradiction, suppose that $(b_n)_{n\in\N}$ is a sequence as in the hypotheses for which the thesis fails for the orthonormal basis $\bar e$. For $M\subseteq\N$ we let $p_M=\sum_{n\in M}p_n$. By passing to a subsequence we can assume that 
\begin{equation}\label{eqSOT}
\inf_n\norm{\mathbb E_{\bar e}(b_n)-b_n}>0.
\end{equation}
We now construct two sequences $(G_k)_{k\in\N}$ and $(n_k)_{k\in\N}$ such that
\begin{itemize}
    \item $(n_k)_{k\in\N}$ is an increasing sequence of naturals and $(G_k)$ is a sequence of pairwise disjoint finite subsets of $\N$ such that 
    \[
    \norm{p_{G_k}b_{n_k}p_{G_k}-b_{n_k}}<2^{-k},
    \]
    and
    \item for every $k\neq k'$
    \[
        \norm{p_{G_k}b_{n_{k'}}p_{G_k}}<2^{-k-k'}.
    \]
\end{itemize}
The construction proceeds by induction. Let $n_0=0$. Since $b_0$ is compact, we can find $G_0\Subset\N$ such that $\norm{p_{G_0}b_0p_{G_0}-b_0}<1$. Suppose that $G_i$ and $n_i$ have been constructed for all $i<k$. Since $(b_n)$ is $\SOTh$null and $F=\bigcup_{i<k}G_i$ is finite, we can find a sufficiently large $n_k\in\N$ such that $\norm{p_Fb_m}<2^{-2k}$ for all $m\geq n_k$. Note that 
\[
\norm{p_{\N\setminus F}b_{n_k}p_{\N\setminus F}-b_{n_k}}<2^{-2k},
\]
and that for every $i<k$ we have that $\norm{p_{G_i}b_{n_k}p_{G_i}}\leq \norm{p_Fb_{n_k}p_F}<2^{-2k}$. Letting  $G_k\in\Fin(\N\setminus F)$ be such that
\[
\norm{p_{G_k}b_{n_k}p_{G_k}}<2^{-k}
\]
concludes the construction.

Let now 
\[
b=\sum_k p_{G_k}b_{n_k}p_{G_k}.
\]
Note that $b-\sum_kb_{n_k}$ is compact. Since $\sum_kb_{n_k}\in D(\bar e)+\mathcal K(H)$, then $b\in D(\bar e)+\mathcal K(H)$.
Let $a\in\mathcal K(H)$ such that $b-a\in D(\bar e)$. Since $b-a\in D(\bar e)$, then for all $k$
\[
\norm{\mathbb E_{\bar e}(\chi_{G_k}(b-a)\chi_{G_k})-\chi_{G_k}(b-a)\chi_{G_k}}=0.
\]
Since $a$ is compact $\lim_k\norm{\chi_{G_k}a\chi_{G_k}}=0$, and so
\begin{eqnarray*}
\lim_k\norm{\mathbb E_{\bar e}(b_{n_k})-b_{n_k}}&=&\lim_k\norm{\mathbb E_{\bar e}(\chi_{G_k}b_{n_k}\chi_{G_k})-\chi_{G_k}b_{n_k}\chi_{G_k}}\\&=&\lim_k\norm{\mathbb E_{\bar e}(\chi_{G_k}(b-a)\chi_{G_k})-\chi_{G_k}(b-a)\chi_{G_k}}=0.
\end{eqnarray*}
This contradicts \eqref{eqSOT} and concludes the proof.
\end{proof}

If $(X,d_X)$ is  a u.l.f.\ metric space, we consider the canonical basis given by indicator functions $(\delta_x)_{x\in X}\subseteq\mathcal B(\ell_2(X))$. Operators diagonalised by $\bar \delta=(\delta_x)_{x\in X}$ are identified with $\ell_\infty(X)$. These are exactly operators of propagation zero, and thus $\ell_\infty(X)$ sits inside $\cstu(X)$ as a maximal abelian self-adjoint subalgebra (\emph{masa} from now on).

Following standard notation, if $x\in X$, we write $\chi_x$ for the rank one projection onto $\mathbb C\delta_x$.  If $A\subseteq X$, we write $\chi_A$ for $\sum_{x\in A}\chi_x$. The masa $\ell_\infty(X)$ is generated as a \cstar-algebra by the projections $\{\chi_A\}_{A\subseteq X}$.

The unital inclusion $\ell_\infty(X)\subseteq\cstu(X)$ is a \emph{Cartan inclusion}, meaning that
\begin{enumerate}
    \item There is a conditional expectation $\mathbb E_X\colon \cstu(X)\to\ell_\infty(X)$ (obtained by restricting the canonical expectation $\mathbb E_{(\delta_x)}$) and
    \item $\cstu(X)$ is generated a \cstar-algebra by the normalizer of $\ell_\infty(X)$ in $\cstu(X)$, defined as
    \[
    \mathcal N_{\cstu(X)}(\ell_\infty)=\{a\in\cstu(X)\mid a\ell_\infty(X)a^* \cup a^*\ell_\infty(X)a\subseteq\ell_\infty(X)\}.
    \]
\end{enumerate}
We call $\ell_\infty(X)$ the \emph{canonical Cartan masa} of $\cstu(X)$. 

Before continuing, let us give the definition of two relevant Roe-like algebras.
\begin{definition}\label{def:quasi-local}
Let $(X,d_X)$ be a u.l.f.\ metric space. An operator $a\in\mathcal B(\ell_2(X))$ is \emph{quasi-local} if for every $\varepsilon>0$ there is $r>0$ such that $\norm{\chi_A a\chi_B}<\varepsilon$ for every $A,B\subseteq X$ such that $d_X(A,B)\geq r$. We denote by $\cstql(X)$ the algebra of quasi-local operators. 
\end{definition}
As finite propagation operators are quasi-local, $\cstu(X)\subseteq\cstql(X)$.

\begin{definition}\label{def:Higsoncorona}
Let $(X,d_X)$ be a metric space. A bounded function $f\colon X\to\mathbb C$ is said to be \emph{slowly oscillating}\footnote{Slowly oscillating functions are also called `of bounded variation' or Higson functions.} if for every $\varepsilon>0$ and $r>0$ there is a compact $F\subseteq X$ such that for every $x,x'\notin F$ we have that
\[
\text{ if }d_X(x,x')\leq r \text{ then } |f(x)-f(x')|<\varepsilon.
\]
Slowly oscillating functions form a \cstar-subalgebra of $\ell_\infty(X)$ containing $C_0(X)$, denoted by $C_h(X)$. The \emph{Higson corona} of $X$, denoted by $C_\nu(X)$, is the quotient \cstar-algebra $C_h(X)/C_0(X)$.
\end{definition}

If $X$ is a u.l.f.\ metric space, compact sets correspond to finite ones. In this case, since $\ell_\infty(X)\cap\mathcal K(\ell_2(X))=C_0(X)$, we can see the Higson corona $C_\nu(X)$ as a subalgebra of $\ell_\infty(X)/C_0(X)$, and consequently of both the \emph{uniform Roe corona} and the \emph{quasi-local corona}, that is, the quotient algebras defined as 
\[
\roeq(X)=\cstu(X)/\mathcal K(\ell_2(X)) \text{ and }\qlq(X)=\cstql(X)/\mathcal K(\ell_2(X)).
\]
The following was proved as Proposition 3.6 in \cite{SquareEmb}.
\begin{prop}\label{prop:Higsoncenter}
Let $(X,d)$ be a u.l.f.\ metric space. Then the Higson corona $C_\nu(X)$ is the center of  both $\roeq(X)$ and $\qlq(X)$.
\end{prop}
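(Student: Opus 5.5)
We prove the two inclusions separately, and it is enough to argue for $\qlq(X)$. Since $\cstu(X)\subseteq\cstql(X)$ and $C_\nu(X)\subseteq\roeq(X)$, once $C_\nu(X)$ is shown to be the centre of $\qlq(X)$, the statement for $\roeq(X)$ follows. Indeed, elements of $C_\nu(X)$ then commute with all of $\roeq(X)$, and conversely a central element of $\roeq(X)$ is handled by the same argument, because the test operators we use below (partial translations) lie in $\cstu(X)$.

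\emph{Step 1: $C_\nu(X)$ is central.} Fix $f\in C_h(X)$ and a quasi-local $a$; we show that $[f,a]=fa-af$ is compact. First suppose $a$ has propagation at most $r$. Then the $(x,x')$ entry of $[f,a]$ is $(f(x)-f(x'))a_{x,x'}$, and it vanishes unless $d(x,x')\le r$. Given $\varepsilon>0$, slow oscillation gives a finite $F$ such that $|f(x)-f(x')|<\varepsilon$ whenever $x,x'\notin F$ and $d(x,x')\le r$. Write $[f,a]=\chi_{X\setminus F}[f,a]\chi_{X\setminus F}+(\text{finite rank})$. The first piece has propagation at most $r$ and entries bounded by $\varepsilon\norm{a}$. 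By uniform local finiteness it decomposes into at most $N_r=\sup_x|B_r(x)|$ partial-translation pieces (a standard colouring argument), so its norm is at most $N_r^{2}\varepsilon\norm{a}$. Hence $[f,a]$ is a norm limit of finite rank operators, i.e.\ compact. For general quasi-local $a$ there is a subtlety: quasi-local operators need not be norm limits of finite propagation operators. Instead we cut $X$ into a partition $\{A_i\}$ of uniformly bounded sets on which $f$ almost agrees with a function $g$ that is constant on each $A_i$ (outside a finite set). Since $f-g$ is small modulo $C_0$, it suffices to commute $g$ with $a$. Now $[g,a]=\sum_{i\ne j}(g_i-g_j)\chi_{A_i}a\chi_{A_j}$. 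Split the pairs $(i,j)$ according to whether $d(A_i,A_j)$ is small, where $g_i\approx g_j$, or large, where $\norm{\chi_A a\chi_B}$ is small by quasi-locality. The near pairs are controlled as in the finite propagation case, and the far pairs are controlled by quasi-locality. This bookkeeping is routine but fiddly.

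\emph{Step 2: the centre lies in $C_\nu(X)$.} Let $a\in\cstql(X)$ have image $\pi(a)$ central in $\qlq(X)$. We proceed in two stages.

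\emph{(i) Reduce to a diagonal element.} Show that $a-\mathbb E_X(a)$ is compact. Here $\mathbb E_X(a)$ is diagonal and $\mathbb E_X$ preserves quasi-locality, since its entries are just the diagonal of $a$. Suppose $a-\mathbb E_X(a)$ were not compact. Then there are $\varepsilon>0$ and pairs $x_n\ne x_n'$ with bounded distance, going to infinity and pairwise far apart, such that $|a_{x_n,x_n'}|>\varepsilon$; we may take them bounded apart because, by quasi-locality, the mass of $a-\mathbb E_X(a)$ at infinity must sit at bounded distance. Alternatively one can extract finite blocks instead of single pairs, which is the same idea as in Lemma~\ref{lem:SOT}. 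Choose an infinite subset $M$ and a projection $p=\chi_{\{x_n:n\in M\}}\in\ell_\infty(X)$. Then $[p,a]$ has entries of size greater than $\varepsilon$ at the far-apart positions $(x_n,x_n')$, so it is not compact. This contradicts centrality, since $p\in\cstu(X)$. So $\pi(a)=\pi(h)$ with $h=\mathbb E_X(a)\in\ell_\infty(X)$.

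\emph{(ii) Show $h$ is slowly oscillating.} If not, there are $r,\varepsilon>0$ and pairs $(x_n,x_n')$ going to infinity with $d(x_n,x_n')\le r$ and $|h(x_n)-h(x_n')|\ge\varepsilon$. After passing to a subsequence, the sets $\{x_n,x_n'\}$ are pairwise disjoint and far apart. Let $v$ be the partial isometry $\delta_{x_n'}\mapsto\delta_{x_n}$. It has propagation at most $r$, so it lies in $\cstu(X)$. The operator $[h,v]$ has entries $h(x_n)-h(x_n')$, which do not tend to $0$, so $[h,v]$ is not compact, again contradicting centrality. Therefore $h\in C_h(X)$ and $\pi(a)\in C_\nu(X)$.

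The main obstacle is stage (i) in the quasi-local setting: we must extract the far-apart off-diagonal pairs, which means using quasi-locality to show that non-compactness of $a-\mathbb E_X(a)$ is witnessed at bounded distance. If this extraction does not go through directly, the fallback is a block-diagonal variant of the argument of Lemma~\ref{lem:SOT}.
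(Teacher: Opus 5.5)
The paper gives no proof of this proposition; it is quoted from \cite[Proposition~3.6]{SquareEmb}, so your outline has to stand on its own. The reduction to $\qlq(X)$, the finite-propagation case of Step~1, and Step~2(ii) are fine. Step~2(i), however, has a genuine gap.

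\textbf{Step 2(i).} You infer ``$b=a-\mathbb E_X(a)$ is not compact, hence $|b_{x_n,x_n'}|>\varepsilon$ along pairs going to infinity''. This inference never uses centrality, and it is false in general. For zero-diagonal operators it amounts to asserting that every ghost operator is compact, which is known to characterise property A. So no use of quasi-locality can rescue it, not even at bounded distance. Concretely, let $X=\bigsqcup_nX_n$ be a coarse disjoint union of expanders and let $p$ be the projection onto functions constant on each $X_n$. Then $p\in\cstu(X)$ and $p-\mathbb E_X(p)$ is not compact, yet all its entries are at most $1/|X_n|\to0$. Your fallback via Lemma~\ref{lem:SOT} does not help either, because that lemma presupposes membership in $D(\bar e)+\mathcal K$, which is exactly what (i) is supposed to prove.

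What is missing is a dimension-free estimate on finite blocks. If $c$ is a finite matrix with zero diagonal, then averaging $\chi_Ac\chi_{A^c}$ over all subsets $A$ of the index set gives $c/4$. Hence some $A$ satisfies $\norm{\chi_Ac\chi_{A^c}}\geq\norm{c}/4$, and $\chi_Ac\chi_{A^c}=\chi_A[\chi_A,c]\chi_{A^c}$. With this in hand, proceed as follows. As in the proof of Lemma~\ref{lem:SOT}, choose disjoint finite sets $G_n$ going to infinity with $\norm{\chi_{G_n}b\chi_{G_n}}\geq\varepsilon$ and $\norm{\chi_{G_n}b\chi_{G_m}}<2^{-n-m}$ for $n\neq m$. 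Pick $A_n\subseteq G_n$ by the averaging estimate, and set $A=\bigcup_nA_n$ and $G=\bigcup_nG_n$. Then $\chi_Ab\chi_{G\setminus A}$ is a compact plus a block-diagonal operator whose blocks have norm at least $\varepsilon/4$. Therefore $[\chi_A,a]=[\chi_A,b]$ is not compact, which contradicts centrality. Equivalently, what you need is that $\ell_\infty(X)/C_0(X)$ is maximal abelian in the Calkin algebra.

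\textbf{Step 1, quasi-local case.} What you describe does not work here, though this only affects the $\qlq(X)$ half, since for $\roeq(X)$ your finite-propagation argument plus norm limits suffices. Take an infinite partition into uniformly bounded sets. Quasi-locality bounds each far block $\chi_{A_i}a\chi_{A_j}$ individually, but not the infinite weighted sum of these blocks. That sum is $[g,a_{\mathrm{far}}]$, where $a_{\mathrm{far}}=a-a_{\mathrm{near}}$ and $a_{\mathrm{near}}$ has finite propagation. Estimating it is therefore equivalent to the original claim. Moreover, $a_{\mathrm{far}}$ is not small in general: otherwise every quasi-local operator would lie in $\cstu(X)$, contradicting \cite{Ozawa2023uRaSmallerQL}.

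What works is to cut by \emph{values} of $f$ into finitely many pieces. Take $f$ real; treat real and imaginary parts separately.
\begin{enumerate}
\item Split the range of $f$ into $K$ half-open intervals of length $\varepsilon$. Let $E_k$ be the corresponding level sets with a finite set $F$ removed, and let $g$ be the associated step function, so that $\norm{f-g}\leq\varepsilon$ off $F$.
\item Fix the constants in this order: first $\eta\ll\varepsilon/(K^2\norm{f})$, then $R$ from quasi-locality for $\eta$, then $F$ from slow oscillation at scale $R$.
\item By slow oscillation, non-adjacent level sets are then $R$-separated. So the at most $K^2$ blocks $\chi_{E_k}a\chi_{E_l}$ with $|k-l|\geq2$ contribute at most $2\norm{f}K^2\eta$.
\item The adjacent blocks carry coefficients $\pm\varepsilon$. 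Splitting into even and odd $k$ shows they form an operator of norm at most $4\varepsilon\norm{a}$.
\end{enumerate}
Hence $[f,a]$ lies within $O(\varepsilon)$ of a finite-rank operator, and so it is compact.
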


We now combine Lemma~\ref{lem:SOT} and Proposition~\ref{prop:Higsoncenter}, together with the fact that $^*$-isomorphisms between uniform Roe (and quasi-local) algebras send compact operators to compact operators and strongly continuous (\cite[Lemma 3.1]{SpakulaWillett2013AdvMath}).

We write $\pi_X\colon \mathcal B(\ell_2(X))\to\mathcal B(\ell_2(X))/\mathcal K(\ell_2(X))$ for the canonical Calkin algebra quotient map, so that
\[
\roeq(X)=\pi_X[\cstu(X)] \text{ and }\qlq(X)=\pi_X[\cstql(X)].
\]

\begin{prop}\label{prop:Higson1}
Let $(X,d_X)$ and $(Y,d_Y)$ be u.l.f.\ metric spaces, and let $\Phi\colon\cstu(X)\to\cstu(Y)$ be a $^*$-isomorphism. Suppose that $(a_n)_{n\in\N}\subseteq C_0(X)$ is a sequence of mutually orthogonal contractions such that for every $M\subseteq\N$, $\sum_{n\in M}a_n\in C_h(X)$. Then for every $\varepsilon>0$ there exists $n_0\in\N$ such that for every $n\geq n_0$
\[
\norm{\mathbb E_Y(\Phi(a_n))-\Phi(a_n)}<\varepsilon.
\]
The same statement applies to an isomorphism between quasi-local algebras.
\end{prop}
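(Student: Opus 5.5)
The plan is to apply Lemma~\ref{lem:SOT} to the sequence $b_n=\Phi(a_n)$ in $\mathcal B(\ell_2(Y))$, with respect to the canonical basis $(\delta_y)_{y\in Y}$. Then $D(\bar\delta)=\ell_\infty(Y)$ and $\mathbb E_{\bar\delta}$ restricts to $\mathbb E_Y$ on $\cstu(Y)$, so the conclusion of the lemma is exactly the statement. It therefore suffices to check three things: the $b_n$ are compact; the sequence $(b_n)$ is $\SOTh$null; and for every $M\subseteq\N$ the sum $\sum_{n\in M}b_n$ exists and lies in $\ell_\infty(Y)+\mathcal K(\ell_2(Y))$.

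Compactness is immediate. Each $a_n\in C_0(X)$ is compact, and $\Phi$ maps compacts to compacts; this follows from the fact that $\mathcal K$ is the unique minimal ideal, or from \cite[Lemma 3.1]{SpakulaWillett2013AdvMath}.

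For the summability, fix $M\subseteq\N$. Since the $a_n$ are mutually orthogonal contractions in $\ell_\infty(X)$, the sum $a_M=\sum_{n\in M}a_n$ converges strongly to a contraction in $\ell_\infty(X)$. By hypothesis $a_M\in C_h(X)\subseteq\cstu(X)$. By the strong continuity of $\Phi$ on bounded sets \cite[Lemma 3.1]{SpakulaWillett2013AdvMath}, $\Phi(a_M)=\SOTh\sum_{n\in M}\Phi(a_n)$, so the sum $\sum_{n\in M}b_n$ exists and equals $\Phi(a_M)$. Taking $M$ cofinite minus larger and larger initial segments, or simply noting that the $a_n$ are orthogonal so $a_n\to0$ strongly, strong continuity also gives that $(b_n)$ is $\SOTh$null.

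It remains to see that $\Phi(a_M)\in\ell_\infty(Y)+\mathcal K(\ell_2(Y))$, and this is where Proposition~\ref{prop:Higsoncenter} enters. Since $\Phi$ preserves compacts (and so does $\Phi^{-1}$), it induces an isomorphism $\tilde\Phi\colon\roeq(X)\to\roeq(Y)$ with $\tilde\Phi\circ\pi_X=\pi_Y\circ\Phi$. Isomorphisms carry centers onto centers, so $\tilde\Phi[C_\nu(X)]=C_\nu(Y)$. Now $\pi_X(a_M)\in C_\nu(X)$, hence $\pi_Y(\Phi(a_M))\in C_\nu(Y)\subseteq\pi_Y[\ell_\infty(Y)]$. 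Thus $\Phi(a_M)\in C_h(Y)+\mathcal K\subseteq\ell_\infty(Y)+\mathcal K(\ell_2(Y))$, and Lemma~\ref{lem:SOT} finishes the proof.

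For quasi-local algebras the argument is identical, using the $\qlq$ half of Proposition~\ref{prop:Higsoncenter}. One also needs the facts that isomorphisms of quasi-local algebras preserve compacts and are strongly continuous. These follow by the same argument as \cite[Lemma 3.1]{SpakulaWillett2013AdvMath}: the algebras contain $\mathcal K$ as their minimal ideal, so such an isomorphism is spatial, implemented by a unitary. The only genuinely nontrivial input is the center computation, which is assumed; the main care needed is verifying that strong continuity applies, which requires that the partial sums be uniformly bounded. That is guaranteed by orthogonality.
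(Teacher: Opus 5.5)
Your proposal is correct and follows the paper's proof essentially step for step. Like the paper, you verify the hypotheses of Lemma~\ref{lem:SOT} for $b_n=\Phi(a_n)$ using two ingredients: that $\Phi$ preserves compacts and is strongly continuous, and that the induced corona isomorphism carries the center $C_\nu(X)$ onto $C_\nu(Y)$ by Proposition~\ref{prop:Higsoncenter}, so that $\sum_{n\in M}\Phi(a_n)=\Phi(a_M)\in C_h(Y)+\mathcal K(\ell_2(Y))$; your extra remarks (minimality of $\mathcal K$, spatiality of $\Phi$) only spell out what the paper cites from \cite[Lemma 3.1]{SpakulaWillett2013AdvMath}.
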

\begin{proof}
Since $(a_n)_{n\in\N}$ is a sequence of mutually orthogonal contractions in $C_0(X)$, it is $\SOTh$null. Since $\Phi$ sends compacts to compacts and it is strongly continuous, each $\Phi(a_n)$ is compact and $(\Phi(a_n))_{n\in\N}$ is $\SOTh$null. 
\begin{claim}
    For every $M\subseteq\N$ we have that $\sum_{n\in M}\Phi(a_n)\in C_h(Y)+\mathcal K(\ell_2(Y))$.
\end{claim}
\begin{proof}
   Fix $M\subseteq\N$. Since $\sum_{n\in M}a_n\in C_h(X)$, then $\pi_X(\sum_{n\in M}a_n)\in C_\nu(X)$. Since $\Phi$ maps compacts to compacts (and so does $\Phi^{-1}$), then $\Phi$ induces a $^*$-isomorphism
\[
\tilde\Phi\colon \roeq(X)\to\roeq(Y).
\]
By Proposition~\ref{prop:Higsoncenter}, $\tilde\Phi$ maps $\mathcal Z(\roeq(X))$, the center of $\roeq(X)$, to $\mathcal Z(\roeq(Y))$, which implies that 
\[
\tilde\Phi(\pi_X(\sum_{n\in M}a_n))\in C_\nu(Y),
\]
and so 
\[
\sum_{n\in M}\Phi(a_n)=\Phi(\sum_{n\in M}a_n)\in C_h(Y)+\mathcal K(\ell_2(Y)),
\]
where the first equality follows again from strong continuity of $\Phi$.  
\end{proof}
Since $C_h(Y)\subseteq\ell_\infty(Y)$ and applying Lemma~\ref{lem:SOT} to $H=\ell_2(Y)$, $\bar e$ the canonical basis $(\delta_y)_{y\in Y}$, and $b_n=\Phi(a_n)$ gives the thesis.

The same argument works when replacing uniform Roe coronas with quasi-local ones.
\end{proof}

 Let $(X,d_X)$ be a metric space, and let $A\subseteq X$. If $r\geq 0$, we write $B_r(A)$ for the ball of radius $r$ around $A$, that is
\[
B_r(A)=\bigcup_{x\in A}B_r(x).
\]
For $r>0$ and $A\subseteq X$, we write $g_{A,r}$ for the function in $\ell_\infty(X)$ defined as
\[
g_{A,r}(x)=\max\left \{0, 1-\frac{d_X(x,A)}{r}\right \}.
\]
$g_{A,r}$ is $1$ on $A$, vanishes outside of $B_r(A)$, and slowly decreases as $x$ gets further and further from $A$.

\begin{prop}\label{prop:Higson2}
Let $(X,d_X)$ and $(Y,d_Y)$ be u.l.f.\ metric spaces, and suppose that $\Phi\colon\cstu(X)\to\cstu(Y)$ a $^*$-isomorphism. Let $\varepsilon>0$. There is $m>0$ and $F\in\Fin(X)$ such that for every $A\in\Fin(X\setminus F)$ we have that
\[
\norm{\mathbb E_Y(\Phi(g_{A,m}))-\Phi(g_{A,m})}<\varepsilon.
\]
\end{prop}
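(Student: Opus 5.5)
We argue by contradiction and reduce to Proposition~\ref{prop:Higson1}. Suppose the statement fails for some $\varepsilon>0$. Then for every $m>0$ and every $F\in\Fin(X)$ there is $A\in\Fin(X\setminus F)$ with $\norm{\mathbb E_Y(\Phi(g_{A,m}))-\Phi(g_{A,m})}\geq\varepsilon$. We will use this to build inductively a sequence of finite sets $A_n$ and radii $m_n\to\infty$ such that the functions $a_n=g_{A_n,m_n}$ satisfy the hypotheses of Proposition~\ref{prop:Higson1}, yet $\norm{\mathbb E_Y(\Phi(a_n))-\Phi(a_n)}\geq\varepsilon$ for all $n$. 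This contradicts Proposition~\ref{prop:Higson1}.

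\emph{Construction.} Set $m_n=n+1$ (any sequence increasing to infinity works). At stage $n$, let
\[
F_n=B_{m_n+m_{n-1}+n}\Big(\bigcup_{i<n}B_{m_i}(A_i)\Big)\cup B_n(x_0)
\]
for a fixed base point $x_0$. This set is finite by uniform local finiteness, since each $A_i$ is finite. By the failure of the statement, choose $A_n\in\Fin(X\setminus F_n)$ with the norm inequality at radius $m_n$. Each $a_n$ is a finitely supported contraction, so $a_n\in C_0(X)$. The supports $B_{m_n}(A_n)$ are pairwise disjoint, and in fact at distance greater than $n$ from one another, so the $a_n$ are mutually orthogonal. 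The supports also escape to infinity, because they avoid $B_n(x_0)$.

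\emph{Slow oscillation of the sums.} This is the main step. Fix $M\subseteq\N$ and put $f=\sum_{n\in M}a_n$. Given $r,\delta>0$, take $N$ with $r/m_N<\delta$ and $N>r$, and let $F$ be the finite set $\bigcup_{n<N}B_{m_n}(A_n)$, enlarged by radius $r$. Take $x,x'\notin F$ with $d(x,x')\leq r$.
\begin{itemize}
\item If $x$ and $x'$ both lie in the support of the same $a_n$, or one lies in it and the other is outside every support, then $|f(x)-f(x')|\leq r/m_n<\delta$. This is because each $g_{A,m}$ is $1/m$-Lipschitz, and $n\geq N$.
\item The points cannot lie in the supports of two different $a_n,a_{n'}$ with $n,n'\geq N$, since those supports are more than $r$ apart.
\end{itemize}
Hence $f\in C_h(X)$, and we must check that the separation built into $F_n$ really guarantees this. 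A cleaner choice is to make the $F_n$-neighbourhood radius exceed $m_n+n$, which makes the supports at least $n$ apart.

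\emph{Conclusion and obstacle.} With this, Proposition~\ref{prop:Higson1} yields $n_0$ such that $\norm{\mathbb E_Y(\Phi(a_n))-\Phi(a_n)}<\varepsilon$ for all $n\geq n_0$, contradicting the choice of the $A_n$. The only delicate points are bookkeeping ones. First, the radius $m$ must be allowed to vary with $n$ and grow, so that the Lipschitz constant tends to $0$. Second, the contradiction hypothesis must hold for every $m$, which follows from negating ``there is $m$ and $F$''. The witness $m$ in the statement is therefore obtained non-constructively from this argument.
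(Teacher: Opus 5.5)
Your proposal is correct and follows essentially the same route as the paper. Both argue by contradiction, pick finite sets $A_n$ with radii tending to infinity whose flattened indicators violate the estimate, separate the supports enough that every partial sum $\sum_{n\in M}g_{A_n,m_n}$ is slowly oscillating, and contradict Proposition~\ref{prop:Higson1}; the only difference is bookkeeping (the paper uses radius $n$ and passes to a subsequence with $d_X(A_n,A_m)\geq 2(n+m)$, while you build the separation into the choice of $F_n$, which as written already suffices, so your closing hedge is unnecessary).
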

\begin{proof}
The idea is to assume the thesis fails, and construct diagonally a counterexample to Proposition~\ref{prop:Higson1}. Fix $\varepsilon>0$ witnessing the failure of the thesis, and, by induction, construct a sequence of mutually disjoint finite subsets of  $X$, $(A_n)$, such that 
\[
\norm{\mathbb E_Y(\Phi(g_{A_n,n}))-\Phi(g_{A_n,n})}\geq \varepsilon.
\]
By passing to a subsequence, using local finiteness of $X$, we can assume that $d_X(A_n,A_m)\geq 2(n+m)$. For $M\subseteq\mathbb N$, define
\[
g_M=\sum_{n\in M}g_{A_n,n}.
\]
\begin{claim}
For every $M\subseteq\N$, $g_M\in C_h(X)$.
\end{claim}
\begin{proof}
Fix positive reals $\delta$ and $r$ with $\delta<1$. Since $g_M\in\ell_\infty(X)$, we can treat it is a bounded function $g_M\colon X\to\mathbb C$. Let $n\in \N$ with $n>\frac{r}{\delta}$, and $F=B_n(\bigcup_{m\leq n}A_m)$. Since $\delta<1$, $n>r$. 

We want to show that if $x,x'\in X\setminus F$ are such that $d_X(x,x')\leq r$ then $|g_M(x)-g_M(x')|<\delta$. If both $x$ and $x'$ belong to $X\setminus \bigcup_k B_k(A_k)$, then $g_M(x)=g_M(x')=0$, and there is nothing to prove. Suppose then that $x\in B_k(A_k)$ for some $k> n$, so that $x'\in B_{r+k}(A_k)$. Since for every $k'\neq k$ we have that $d_X(A_k,A_{k'})\geq 2(k+k')$, and $k>r$, the sets $B_{r+k}(A_k)$ and $\bigcup_{m\neq k} B_{m}(A_m)$ are disjoint, which implies that 
\[
g_M(x)=g_{A_k,k}(x) \text{ and }g_M(x')=g_{A_k,k}(x').
\]
Since once again $d_X(x'x)\leq r$, we have that 
\[
|g_M(x)-g_M(x')|=|g_{A_k,k}(x)-g_{A_k,k}(x')|\leq \frac{r}{k}< \frac{r}{n}<\delta.
\]
This shows that $g_M$ is slowly oscillating.
\end{proof}
We have shown that the sequence $(g_{A_n,n})_{n\in\N}$ satisfies the hypotheses of Proposition~\ref{prop:Higson1}, and therefore for all sufficiently large $n$, one has that
\[
\norm{\mathbb E_Y(\Phi(g_{A_n,n}))-\Phi(g_{A_n,n})}<\varepsilon.
\]
This is a contradiction.
\end{proof}

\section{The main result}\label{S.mainproof}
Here we prove our main result, restated for convenience.

\begin{theorem}\label{thm:main}
Let $(X,d_X)$ and $(Y,d_Y)$ be u.l.f.\ metric spaces whose uniform Roe algebras are isomorphic. Then $X$ and $Y$ are bijectively coarsely equivalent.
\end{theorem}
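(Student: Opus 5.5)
Fix a $^*$-isomorphism $\Phi\colon\cstu(X)\to\cstu(Y)$. I follow the strategy of \S\ref{ss:strat}. The plan is to build $\alpha\colon X\to\Fin(Y)$ so that every selector of $\alpha$ is a coarse equivalence and so that Hall's condition \eqref{eq:Hall} holds; then do the same for $\Phi^{-1}$ and combine with Cantor--Schr\"oder--Bernstein.

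\textbf{The guessing map.} Fix a small $\delta>0$ (to be chosen, say $\delta<1/4$) and, by uniform approximability (known from \cite{SquareInventiones,MartinezVigolo}), an $s>0$ with the following property: for each $x$, the rank one projection $\Phi(\chi_x)$ is within $\delta$ of $\chi_{B}\Phi(\chi_x)\chi_{B}$, where $B$ is a set of uniformly bounded cardinality. Define
\[
\alpha(x)=\{y\in Y\mid \norm{\Phi(\chi_x)\delta_y}\geq\gamma\}
\]
for a threshold $\gamma>0$ chosen small. The Špakula--Willett / Martínez--Vigolo arguments give $\gamma$ such that $\alpha(x)\neq\emptyset$, $\sup_x|\alpha(x)|<\infty$, and every selector $f$ of $\alpha$ is a coarse equivalence. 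Doing the same for $\Phi^{-1}$ gives $\beta\colon Y\to\Fin(X)$, and selectors of $\alpha$ and $\beta$ are mutually coarse inverse. I take these facts as known from the earlier literature and only need to improve $\alpha$ to satisfy \eqref{eq:Hall}.

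\textbf{The norm estimate (main obstacle).} Following \cite{WhiteWillett2017}, \eqref{eq:Hall} follows from a statement of the following kind. There are $m$, a finite $F\subseteq X$ and $r>0$ such that for all finite $A\subseteq X\setminus F$,
\begin{equation}\label{thegoal}
\norm{\chi_{B_r(\alpha'(A))}\Phi(\chi_A)\chi_{B_r(\alpha'(A))}}\text{ is close to the trace-type bound, i.e. } |A|\leq |B_r(\alpha'(A))|,
\end{equation}
where $\alpha'$ is an enlarged version of $\alpha$. This is the step where Proposition~\ref{prop:Higson2} enters. Take $\varepsilon$ small and obtain $m$ and $F$ from that proposition. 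Then $\Phi(g_{A,m})$ is $\varepsilon$-close to the diagonal operator $d=\mathbb E_Y(\Phi(g_{A,m}))$, and $d$ is a positive contraction. Since $g_{A,m}\geq\chi_A$, we have
\[
\Phi(\chi_A)\leq\Phi(g_{A,m})\approx_\varepsilon d.
\]
The projection $\Phi(\chi_A)$ has rank $|A|$. Let $C=\{y\mid d(y)\geq 1/2\}$. Then the compression of $\Phi(\chi_A)$ to $\ell_2(Y\setminus C)$ has norm at most $1/2+\varepsilon$. A rank count (for example via the spectral projection, or by comparing dimensions of the range of $\Phi(\chi_A)$ and of $\chi_C$) then gives $|A|\leq|C|$. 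The remaining task is to show $C\subseteq\bigcup_{x\in B_{m}(A)}\alpha'(x)$ for a suitably enlarged $\alpha'(x)\supseteq\alpha(x)$ with uniformly controlled spread. For $y\in C$, we have $1/2\leq\langle\Phi(g_{A,m})\delta_y,\delta_y\rangle\leq\sum_{x\in B_m(A)}\norm{\Phi(\chi_x)\delta_y}^2$. Uniform local finiteness bounds $|B_m(x)|$, and uniform approximability of $\Phi(\chi_{B_m(x)})$ forces some $x\in B_m(A)$ with $\norm{\Phi(\chi_x)\delta_y}$ bounded below by a constant depending only on $m$. So I would define $\alpha'(x)=\bigcup_{x'\in B_m(x)}\{y\mid\norm{\Phi(\chi_{x'})\delta_y}\geq\gamma_m\}$. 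Its selectors are still coarse equivalences, because $\alpha'$ is close to $\alpha$. With this choice, Hall's condition holds for finite $A\subseteq X\setminus F$.

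\textbf{Finishing.} The finite exceptional set $F$ is handled by enlarging $\alpha'$ on $F$; since $Y$ is infinite, or the finite case is trivial, I can add finitely many extra points so that Hall's condition holds for all $A$. Hall's marriage theorem for locally finite bipartite graphs then yields an injective selector $f\colon X\to Y$, which is a coarse equivalence. Symmetrically, $\Phi^{-1}$ yields an injective coarse equivalence $g\colon Y\to X$ coarsely inverse to $f$. The König/Cantor--Schr\"oder--Bernstein construction partitions $X$ into pieces on which one uses either $f$ or $g^{-1}$. The resulting bijection is close to $f$ on each piece, because $g^{-1}$ is close to $f$ where it is defined. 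Hence it is a bijective coarse equivalence.
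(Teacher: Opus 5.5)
Your core mechanism is the paper's. Proposition~\ref{prop:Higson2} gives $\Phi(\chi_A)\le\Phi(g_{A,m})\approx\mathbb E_Y(\Phi(g_{A,m}))$, and a rank count against the set $C$ of large diagonal entries gives $|A|\le|C|$. This is Lemma~\ref{lemma:itholds} run directly instead of by contradiction.

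The genuine gap is the exceptional finite set $F$. Hall's condition on $X\setminus F$ cannot in general be repaired by adding finitely many points to $\alpha'$ on $F$, and the infiniteness of $Y$ does not help. Take $X=Y=\N$, $F=\{0\}$ and $\alpha'(n)=\{n-1\}$ for $n\geq 1$: Hall's condition holds off $F$. Yet for any finite $E=\alpha'(0)$, the set $A=\{0\}\cup\{e+1\mid e\in E\}$ satisfies $|\bigcup_{x\in A}\alpha'(x)|=|E|<|A|$. This is not cosmetic, since a single point can be the obstruction to bijective coarse equivalence.

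The repair must be made at the level of norm estimates \emph{before} counting, which is what Lemma~\ref{lem:deltahalf} does. In your notation:
\begin{enumerate}
\item With $C$ built from $A\setminus F$, your compression bound gives $\norm{(1-\chi_C)\Phi(\chi_{A\setminus F})}\le(1/2+\varepsilon)^{1/2}$.
\item By Lemma~\ref{lem:Inv}, choose $\gamma>0$ with $\norm{(1-\chi_{Y_{F',\gamma}})\Phi(\chi_{F'})}<\eta$ for all $F'\subseteq F$, where $(1/2+\varepsilon)^{1/2}+\eta<1$.
\item With $D=C\cup Y_{A\cap F,\gamma}$, the triangle inequality gives $\norm{(1-\chi_D)\Phi(\chi_A)}<1$ for \emph{every} finite $A\subseteq X$.
\item The rank argument of Lemma~\ref{lemma:hall} then yields Hall's condition for $\alpha''(x)=\alpha'(x)\cup Y_{x,\gamma}$.
\end{enumerate}
So the points added on $F$ are dictated by $\Phi(\chi_F)$, and verifying Hall's condition needs this combined estimate, not a count.

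A second step is misjustified. From $\sum_{x\in B_m(A)}\norm{\Phi(\chi_x)\delta_y}^2\geq 1/2$ you cannot extract a single large term using uniform local finiteness and approximability of $\Phi(\chi_{B_m(x)})$, because $|B_m(A)|$ grows with $|A|$. The needed input is approximability in the other direction. Take $\varepsilon'$ from Lemma~\ref{lem:Inv} with $\norm{(1-\chi_{X_{y,\varepsilon'}})\Phi^{-1}(\chi_y)}<1/2$ for all $y\in Y$. Since $\Phi$ is spatial, $\sum_{x\notin X_{y,\varepsilon'}}\norm{\Phi(\chi_x)\delta_y}^2=\norm{(1-\chi_{X_{y,\varepsilon'}})\Phi^{-1}(\chi_y)}^2<1/4$. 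Hence $B_m(A)$ meets $X_{y,\varepsilon'}$, i.e.\ $y\in Y_{x,\varepsilon'}$ for some $x\in B_m(A)$, with a threshold independent of $A$ (and even of $m$). This is exactly how the paper shows in Lemma~\ref{lemma:itholds} that $B_r(A)$ meets $X_{y,\varepsilon}$. With these two corrections your outline goes through.
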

The same reasoning as in the proof of Theorem~\ref{thm:main} applies to quasi-local algebras. 

\begin{theorem}\label{thm:mainQL}
Let $(X,d_X)$ and $(Y,d_Y)$ be u.l.f.\ metric spaces whose quasi-local algebras are isomorphic. Then $X$ and $Y$ are bijectively coarsely equivalent.
\end{theorem}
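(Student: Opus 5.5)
The plan is to run the argument for Theorem~\ref{thm:main} with $\cstql$ in place of $\cstu$ throughout, checking at each step that only the properties we use are available. Fix a $^*$-isomorphism $\Phi\colon\cstql(X)\to\cstql(Y)$. First, I will check that $\Phi$ and $\Phi^{-1}$ send compact operators to compact operators, and that they are strongly continuous. The argument of \cite[Lemma 3.1]{SpakulaWillett2013AdvMath} applies because $\cstql(X)$ contains $\mathcal K(\ell_2(X))$ as an essential ideal and contains all the rank one projections $\chi_x$. Since $\Phi$ sends minimal projections to minimal projections, it is implemented by a unitary $u\colon\ell_2(X)\to\ell_2(Y)$, that is $\Phi=\Ad u$, and strong continuity follows. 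Hence $\Phi$ descends to an isomorphism $\qlq(X)\to\qlq(Y)$. By Proposition~\ref{prop:Higsoncenter} this isomorphism maps $C_\nu(X)$ onto $C_\nu(Y)$, since these are the centers. Proposition~\ref{prop:Higson1} is already stated for the quasi-local case. The diagonal argument of Proposition~\ref{prop:Higson2} uses only Proposition~\ref{prop:Higson1}, so it transfers verbatim: for every $\varepsilon>0$ there are $m$ and $F\in\Fin(X)$ such that $\Phi(g_{A,m})$ is $\varepsilon$-close to $\ell_\infty(Y)$ for all $A\in\Fin(X\setminus F)$.

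Second, I will build the guessing map $\alpha\colon X\to\Fin(Y)$ and establish the norm estimate that gives \eqref{eq:Hall}. For $\delta>0$ set
\[
\alpha(x)=\{y\in Y : \norm{\Phi(\chi_x)\delta_y}\geq\delta\}.
\]
The point is that the coarseness proofs of \cite{SpakulaWillett2013AdvMath,MartinezVigolo} need only quasi-locality of $\Phi(a)$ for $a$ of finite propagation, together with uniform approximability. Both hold in $\cstql(Y)$: quasi-locality holds by definition, and uniform approximability is proved in \cite{BragaFarah2018} for quasi-local targets. So any selector $f(x)\in\alpha(x)$ is coarse. For the Hall condition, take $A$ finite and compare $\chi_A$ with the flattened function $g_{A,m}$. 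The operator $\Phi(g_{A,m})$ is close to a diagonal operator $\mathbb E_Y(\Phi(g_{A,m}))$. Its trace is $\operatorname{tr}(g_{A,m})\geq|A|$, and its mass is concentrated on $\bigcup_{x\in B_m(A)}\alpha(x)$. From this I would extract that $|B_m(A)|$ is controlled by $|\bigcup_{x\in B_m(A)}\alpha(x)|$, with constants depending only on the u.l.f.\ bounds. The finitely many points of $F$ are handled separately by enlarging $\alpha$ on $F$, and the sets $A$ meeting $F$ are dealt with by splitting them.

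Third, having $\alpha$ with \eqref{eq:Hall}, Hall's marriage theorem (in its infinite, locally finite form) gives an injective coarse map $f\colon X\to Y$. The same construction with $\Phi^{-1}$ gives an injective $g\colon Y\to X$. These are mutually close to being inverse by the usual argument, since $\Phi^{-1}\Phi=\mathrm{Id}$ forces $x\in B_R(g(\alpha(x)))$. A Cantor--Schr\"oder--Bernstein construction then yields a bijection that agrees piecewise with $f$ or $g^{-1}$ and is therefore a bijective coarse equivalence.

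The main obstacle is the second step. One must convert the estimate ``$\Phi(g_{A,m})$ is nearly diagonal'' into a cardinality inequality of the form $|A|\leq|\bigcup_{x\in A}\alpha(x)|$, rather than a version with $A$ replaced by its $m$-neighbourhood. To get the actual Hall condition, I would apply it to $\alpha'(x)=\bigcup_{x'\in B_m(x)}\alpha(x')$, which is still a coarse guess. Each point of the diagonal of $\mathbb E_Y(\Phi(g_{A,m}))$ carries weight at most $1$, and at most $\varepsilon$ of the trace escapes, so the total trace $|A|$ must fit inside $|\bigcup_{x\in A}\alpha'(x)|$ up to an error that can be absorbed by choosing $\delta$ and $\varepsilon$ small relative to the u.l.f.\ constants.
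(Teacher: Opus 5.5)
Your first step is fine and matches the paper, which proves this theorem by running the $\cstu$ argument with $\cstql$ in place of $\cstu$: $\Phi$ is spatial, preserves compacts, descends to $\qlq(X)\to\qlq(Y)$, and Propositions~\ref{prop:Higson1} and~\ref{prop:Higson2} go through verbatim. The gap is in your second step, where you try to get Hall's condition by counting trace.

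\textbf{Why the trace count fails.} All the information available is in operator norm. There is near-diagonality, $\norm{\mathbb E_Y(\Phi(g_{A,m}))-\Phi(g_{A,m})}<\varepsilon$. There is also the fact that each $\Phi(\chi_x)$ lives mostly on $\alpha(x)$. You never justify this second fact; it is the uniform localization of Lemma~\ref{lem:Inv}, the main input from \cite{SquareInventiones}. A norm bound controls each diagonal entry, or each point $x$, only up to a small error. So the trace escaping from $\bigcup_{x\in B_m(A)}\alpha(x)$ is of order $\eta|A|$, not $\eta$, and ``at most $\varepsilon$ of the trace escapes'' does not follow.

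A trace count therefore only yields an inequality whose deficit is proportional to $|A|$. For instance, $|Y_{A,\varepsilon}|\geq(1-\delta^2)|A|$ already follows from Lemma~\ref{lem:Inv} alone, by comparing $\mathrm{tr}(\chi_{Y_{A,\varepsilon}}\Phi(\chi_A))$ with $|Y_{A,\varepsilon}|$; no Higson corona is involved.

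\textbf{Why enlarging $\alpha$ cannot absorb the deficit.} Passing to $\alpha'(x)=\bigcup_{x'\in B_m(x)}\alpha(x')$ does not help. Take $X=Y=\mathbb Z$ and $\alpha(n)=\{\lfloor cn\rfloor,\lfloor cn\rfloor+1\}$ with $0<c<1$. This satisfies $|\bigcup_{n\in A}\alpha(n)|\geq c|A|$ for every finite $A$. Yet for every $k$, an interval $A$ of length $L>(2k+3)/(1-c)$ has $|\bigcup_{n\in A}B_k(\alpha(n))|\leq cL+2k+3<L$. Moreover, the selector $n\mapsto\lfloor cn\rfloor$ is a coarse equivalence not close to any bijection. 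On amenable spaces this cannot be avoided, since bounded neighbourhoods of F{\o}lner sets are only $(1+o(1))$ times larger.

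Your handling of $F$ has a related flaw. Hall inequalities for $A\cap F$ and $A\setminus F$ do not combine into one for $A$, because the two image sets may overlap.

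\textbf{What the paper does instead.} What is needed is an exact count, and the paper gets it from ranks rather than traces. If $\norm{(1-\chi_S)\Phi(\chi_A)}<1$, then $|A|=\rank\Phi(\chi_A)\leq|S|$ (Lemma~\ref{lemma:hall}). The real task is thus the uniform operator-norm estimate \eqref{thegoal}, with $S=B_m(Y_{A,\varepsilon})$ and $\varepsilon,m$ independent of $A$. Lemma~\ref{lemma:itholds} proves it as follows.
\begin{enumerate}
\item Write $\Phi(\chi_A)=\Phi(g_{A,r})\Phi(\chi_A)$, so a failure gives $\norm{(1-\chi_S)\Phi(g_{A,r})}>\delta/2$.
\item Near-diagonality turns this into a sup-norm statement in $\ell_\infty(Y)$, producing a single $y\notin S$ with $\norm{\Phi(g_{A,r})\chi_y}>\delta/4$.
\item Pulling back gives $\norm{\chi_{B_r(A)}\Phi^{-1}(\chi_y)}>\delta/4$. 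By Lemma~\ref{lem:Inv} applied to $\Phi^{-1}$, the set $X_{y,\varepsilon}$ meets $B_r(A)$.
\item Coarseness then forces $y\in S$, a contradiction.
\end{enumerate}
The finite set $F$ is handled at the level of norms, via the triangle inequality in Lemma~\ref{lem:deltahalf}, before the rank argument is applied. The role of near-diagonality is to reduce a norm estimate on an operator of large rank to a single point of $Y$, not to control traces.
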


For the proof of Theorem~\ref{thm:mainQL}, simply replace every instance of $\cstu(X)$ by $\cstql(X)$ in every step of the proof below.

Let us fix for the rest of the section two u.l.f.\ metric spaces spaces $(X,d_X)$ and $(Y,d_Y)$, and a $^*$-isomorphism 
\[
\Phi\colon\cstu(X)\to\cstu(Y).
\]
As sketched in \S\ref{ss:strat}, the idea is to construct functions 
\[\alpha\colon X\to \Fin(Y)\text{ and }\beta\colon Y\to \Fin (X)
\] 
such that 
\begin{enumerate}[label=(Bij\arabic*)]
    \item\label{Bijc1} if $f\colon X\to Y$ and $g\colon Y\to X$ are functions such that for every $x\in X$ and $y\in Y$
    \[
    f(x)\in\alpha(x)\text{ and }g(y)\in\beta(y)
    \]
    then $f$ and $g$ are mutually inverse coarse equivalences, and
    \item\label{Bijc2} for every $A\in \Fin(X)$ and $B\in\Fin(Y)$ we have that
\[
|A|\leq \Big|\bigcup_{x\in A}\alpha(x)\Big|\text{ and } |B|\leq \Big|\bigcup_{y\in B}\beta(y)\Big|.
\]
\end{enumerate}
Note that condition \ref{Bijc1} implies that each of the sets $\alpha(x)$ and $\beta(y)$ is nonempty, and that any two functions $f,f'\colon X\to Y$ such that $f(x),f'(x)\in\alpha(x)$ for all $x\in X$ must be close to each other. Let us record Hall's marriage theorem.

\begin{theorem}[Hall, \cite{Hall}]
Let $X,Y$ be sets and let $\alpha \colon X \to \Fin(Y)$ be a function. There is an injective $f\colon X\to Y$ such that $f(x)\in \alpha(x)$ for all $x\in X$ if and only if for every $A\in\Fin(X)$ we have 
\[
|A|\le \Big|\bigcup_{x\in A}\alpha(x)\Big|.
\]
 \label{ThmHall}
\end{theorem}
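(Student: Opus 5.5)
The necessity of the condition is immediate: if $f$ is injective with $f(x)\in\alpha(x)$, then for every $A\in\Fin(X)$ the set $f[A]$ has $|A|$ elements and is contained in $\bigcup_{x\in A}\alpha(x)$. The work is in sufficiency. I will first handle the case where $X$ is finite, and then pass to arbitrary $X$ by a compactness argument. For the latter step it is essential that each $\alpha(x)$ is finite.

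For finite $X$ I plan to argue by induction on $|X|$, following the classical Halmos--Vaughan argument. There are two cases.

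\emph{First case.} Suppose every nonempty proper $A\subsetneq X$ satisfies the strict inequality $|A|<|\bigcup_{x\in A}\alpha(x)|$. Pick any $x_0\in X$ and any $y_0\in\alpha(x_0)$; this set is nonempty by the hypothesis applied to $A=\{x_0\}$. Remove $x_0$ and delete $y_0$ from all the other sets $\alpha(x)$. The strict inequality guarantees that the Hall condition still holds for $X\setminus\{x_0\}$, so induction gives a matching there, which we extend by $x_0\mapsto y_0$.

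\emph{Second case.} Suppose some nonempty proper $A_0$ is \emph{tight}, that is, $|A_0|=|\bigcup_{x\in A_0}\alpha(x)|$. Write $N(A)=\bigcup_{x\in A}\alpha(x)$. By induction, match $A_0$ into $N(A_0)$. Then consider $X\setminus A_0$ with the modified sets $\alpha'(x)=\alpha(x)\setminus N(A_0)$. For $B\subseteq X\setminus A_0$ we have
\[
|B|+|A_0|\le|N(B\cup A_0)|=|N'(B)|+|N(A_0)|,
\]
so Hall's condition holds for $\alpha'$. Induction on $X\setminus A_0$ gives a second matching, and gluing the two yields the required injection.

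For arbitrary $X$, consider the product space $P=\prod_{x\in X}\alpha(x)$. Each factor is finite and discrete, so $P$ is compact by Tychonoff's theorem. For each $A\in\Fin(X)$, let $C_A\subseteq P$ be the set of those $f$ whose restriction to $A$ is injective. Each $C_A$ is closed, since it depends only on finitely many coordinates. Each $C_A$ is also nonempty: the Hall condition restricted to subsets of $A$ is exactly the finite hypothesis, so the finite case provides an injective choice on $A$, which we extend arbitrarily on $X\setminus A$. The family $\{C_A\}$ has the finite intersection property, because $C_{A_1}\cap\dots\cap C_{A_k}\supseteq C_{A_1\cup\dots\cup A_k}\neq\emptyset$. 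By compactness, $\bigcap_A C_A\neq\emptyset$, and any $f$ in this intersection is injective on every pair of points, hence injective, with $f(x)\in\alpha(x)$ for all $x$.

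The main obstacle is the tight-set case of the finite induction, namely verifying that the Hall condition survives deleting $N(A_0)$; the inequality displayed above settles it. Finiteness of each $\alpha(x)$ is exactly what makes the compactness step work. Without it, the infinite version fails, as the example of $\alpha(0)=\N\setminus\{0\}$ and $\alpha(n)=\{n\}$ for $n\geq 1$ shows.
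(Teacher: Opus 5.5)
Your proof is correct. The paper gives no proof of this statement; it quotes Hall's theorem from the literature. What you have written is the standard argument: the Halmos--Vaughan induction for finite $X$, followed by a Tychonoff compactness argument on $\prod_{x\in X}\alpha(x)$ for infinite $X$.

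The individual steps check out:
\begin{itemize}
\item In the non-tight case, the strict inequality $|B|<|N(B)|$ for $B\subseteq X\setminus\{x_0\}$ is exactly what lets Hall's condition survive the removal of $y_0$.
\item In the tight case, $|A_0|=|N(A_0)|$ together with $|B|+|A_0|\le|N'(B)|+|N(A_0)|$ gives Hall's condition for $\alpha'$. The glued map is injective because the two images lie in $N(A_0)$ and in its complement, respectively.
\item In the compactness step, each $C_A$ is clopen, being determined by finitely many coordinates. Each is nonempty by the finite case, using that every $\alpha(x)$ is nonempty by Hall's condition on singletons. The family has the finite intersection property since $C_{A_1}\cap\dots\cap C_{A_k}\supseteq C_{A_1\cup\dots\cup A_k}$.
\end{itemize}

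The infinite extension for families of finite sets is the version the paper actually uses in the proof of Lemma~\ref{lemma:alphabeta}, since $X$ and $Y$ there are infinite. Your example with $\alpha(0)=\N\setminus\{0\}$ correctly shows why finiteness of each $\alpha(x)$ is needed.

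In the paper's application $X$ is a u.l.f.\ metric space, hence countable, since it is the union of the finite balls $B_n(x_0)$. So Tychonoff could be replaced by K\"onig's lemma on the finitely branching tree of injective choice functions on $\{x_0,\dots,x_{n-1}\}$. Your version has the advantage of proving the statement for arbitrary sets $X$, as it is formulated.
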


\begin{lemma}\label{lemma:alphabeta}
Suppose that $\alpha\colon X\to \Fin(Y)$ and $\beta\colon Y\to\Fin(X)$ are functions satisfying conditions \ref{Bijc1} and \ref{Bijc2}. Then any $f\colon X\to Y$ such that $f(x)\in\alpha(x)$ for all $x\in X$ is close to a bijective coarse equivalence.
\end{lemma}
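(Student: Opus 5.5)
By Theorem~\ref{ThmHall}, condition \ref{Bijc2} gives injective functions $f_0\colon X\to Y$ and $g_0\colon Y\to X$ with $f_0(x)\in\alpha(x)$ and $g_0(y)\in\beta(y)$ for all $x\in X$ and $y\in Y$. By \ref{Bijc1}, $f_0$ and $g_0$ are mutually inverse coarse equivalences. Moreover, any $f$ with $f(x)\in\alpha(x)$ for all $x$ is close to $f_0$, again by \ref{Bijc1}: both $(f,g_0)$ and $(f_0,g_0)$ are inverse pairs, so $f\sim_{cl}f_0\circ g_0\circ f\sim_{cl}f_0$. It therefore suffices to produce a bijection $h\colon X\to Y$ that is close to $f_0$. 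Such an $h$ is automatically a coarse equivalence, since any map close to a coarse equivalence is one.

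To build $h$, I would run the proof of the Cantor--Schr\"oder--Bernstein theorem. For $x\in X$, trace its ancestry backwards: if $x\in g_0[Y]$, take $y=g_0^{-1}(x)$; if $y\in f_0[X]$, take $f_0^{-1}(y)$; and continue. This partitions $X$ into three sets. $X_X$ consists of points whose chain stops in $X$, $X_Y$ of points whose chain stops in $Y$, and $X_\infty$ of points whose chain never stops. Define $h(x)=f_0(x)$ for $x\in X_X\cup X_\infty$, and $h(x)=g_0^{-1}(x)$ for $x\in X_Y$. The standard argument shows $h$ is a bijection; injectivity of both $f_0$ and $g_0$ is used here.

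The only genuine point to check is closeness on $X_Y$. There, $h(x)=g_0^{-1}(x)$, and we need $\sup_{x\in X_Y}d_Y(g_0^{-1}(x),f_0(x))<\infty$. Writing $y=g_0^{-1}(x)$, we have $x=g_0(y)$, so
\[
d_Y\bigl(h(x),f_0(x)\bigr)=d_Y\bigl(y,f_0(g_0(y))\bigr)\le\sup_{y'\in Y}d_Y\bigl(y',f_0\circ g_0(y')\bigr),
\]
and the right-hand side is finite because $f_0\circ g_0\sim_{cl}Id_Y$. Hence $h\sim_{cl}f_0\sim_{cl}f$, and $h$ is the required bijective coarse equivalence. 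I do not expect a serious obstacle: the substantive step is obtaining the injections from Hall's theorem, which is given, and the rest is bookkeeping in the Cantor--Schr\"oder--Bernstein construction.
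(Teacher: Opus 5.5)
Your proposal is correct and follows the paper's proof. Hall's theorem gives injective selections $f_0$ and $g_0$, and König's Cantor--Schr\"oder--Bernstein construction gives a bijection $h$ equal pointwise to $f_0$ or $g_0^{-1}$, with $h\sim_{cl}f_0$ because $f_0\circ g_0\sim_{cl}Id_Y$. Your check via \ref{Bijc1} that an arbitrary selection $f$ is close to $f_0$ just spells out what the paper compresses into ``we can assume that $f$ is injective.''
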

\begin{proof}
Since each $\alpha(x)$ is nonempty, there is a function $f\colon X\to Y$  such that $f(x)\in\alpha(x)$ for all $x\in X$. By Theorem~\ref{ThmHall} and condition~\ref{Bijc2} we can assume that $f$ is injective. Similarly, we can construct an injective $g\colon Y\to X$ such that $g(y)\in\beta(y)$ for all $y\in Y$. K\"onig's proof of the Cantor--Schroeder--Bernstein theorem gives a bijection $h\colon X\to Y$ such that, for all $x\in X$, we have that either $h(x)=f(x)$ or $x\in \mathrm{Im}(g)$ and $h(x)=g^{-1}(x)$. Since $f$ and $g$ are mutually inverse coarse equivalences, $h$ is close to $f$. This is the required bijective coarse equivalence.
\end{proof}
Our next goal is to construct candidates for the functions $\alpha$ and $\beta$.
For $x\in X$, $y\in Y$, and $\varepsilon>0$, we let
\[
Y_{x,\varepsilon}=\{z\in Y\mid \norm{\Phi(\chi_x)\chi_z}>\varepsilon\} \text{ and } X_{y,\varepsilon}=\{w\in X\mid \norm{\Phi^{-1}(\chi_y)\chi_w}>\varepsilon\}.
\]
 If $A\subseteq X$ and $B\subseteq Y$ we let
 \[
 Y_{A,\varepsilon}=\bigcup_{x\in A}Y_{x,\varepsilon} \text{ and }X_{B,\varepsilon}=\bigcup_{y\in B}X_{y,\varepsilon}.
 \]
For a proof of the following lemma, see \cite[Theorem 4.1]{SpakulaWillett2013AdvMath} or \cite[Proposition 1.10]{SquareInventiones}.

\begin{lemma}\label{lem:JanRufus}
Let $m\geq 0$ and $\varepsilon>0$. Then each $Y_{x,\varepsilon}$ (and each $X_{y,\varepsilon}$) is finite. In addition, suppose that $f\colon X\to Y$ is such that $f(x)\in B_m(Y_{x,\varepsilon})$ for all $x\in X$. Then $f$ is a coarse equivalence, and if $g\colon Y\to X$ is  such that $g(y)\in B_m(X_{y,\varepsilon})$ for all $y\in Y$, then $f$ and $g$ are mutually inverse coarse equivalences.
\end{lemma}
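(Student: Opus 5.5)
The plan is to exploit that $\Phi$ is spatially implemented, and then to derive every coarse statement from one uniform estimate, obtained by a sparse block-sum argument together with strong continuity of $\Phi$ (\cite[Lemma 3.1]{SpakulaWillett2013AdvMath}).

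\emph{Finiteness and symmetry.} Since $\Phi$ and $\Phi^{-1}$ send compact operators to compact operators, $\Phi$ maps minimal projections to minimal projections. Hence $\Phi(\chi_x)$ is the rank one projection onto $\mathbb C\xi_x$ for some unit vector $\xi_x\in\ell_2(Y)$. In fact $\Phi=\Ad u$ for a unitary $u\colon\ell_2(X)\to\ell_2(Y)$, and we may take $\xi_x=u\delta_x$. Then
\[
\norm{\Phi(\chi_x)\chi_z}=|\langle \xi_x,\delta_z\rangle|,
\]
and since $\sum_z|\langle\xi_x,\delta_z\rangle|^2=1$, only finitely many $z$ can have $|\langle\xi_x,\delta_z\rangle|>\varepsilon$. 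So $Y_{x,\varepsilon}$ is finite, and the same argument applied to $\Phi^{-1}=\Ad u^*$ shows $X_{y,\varepsilon}$ is finite. The same computation also gives
\[
\norm{\Phi^{-1}(\chi_z)\chi_x}=|\langle u^*\delta_z,\delta_x\rangle|=\norm{\Phi(\chi_x)\chi_z},
\]
so $z\in Y_{x,\varepsilon}$ if and only if $x\in X_{z,\varepsilon}$. This symmetry will be used to compare $g\circ f$ with the identity.

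\emph{Key uniform estimate.} For every $r$ I want some $s$ such that whenever $d_X(x,x')\le r$, $z\in Y_{x,\varepsilon}$ and $z'\in Y_{x',\varepsilon}$, we have $d_Y(z,z')\le s$. Suppose this fails. Then there are pairs $(x_n,x'_n)$ with $d_X(x_n,x'_n)\le r$, together with points $z_n\in Y_{x_n,\varepsilon}$ and $z'_n\in Y_{x'_n,\varepsilon}$ with $d_Y(z_n,z'_n)\to\infty$. Write $e_n$ for the partial isometry $\delta_{x_n}\mapsto\delta_{x'_n}$; then $\Phi(e_n)=\xi_{x'_n}\otimes\xi_{x_n}^*$, and therefore
\[
\norm{\chi_{z'_n}\Phi(e_n)\chi_{z_n}}>\varepsilon^2 .
\]
Each $\Phi(e_k)$ is compact and the sequence $(\Phi(e_n))_n$ is $\SOTh$null, so after passing to a subsequence with pairwise disjoint supports I can arrange that the cross terms $\chi_{z'_n}\Phi(e_k)\chi_{z_n}$ for $k\neq n$ have total norm below $\varepsilon^2/2$. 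This mirrors the inductive construction in the proof of Lemma~\ref{lem:SOT}. Now $v=\sum_n e_n$ has propagation at most $r$, and by strong continuity $\Phi(v)=\SOTh\sum_n\Phi(e_n)$. Hence
\[
\norm{\chi_{z'_n}\Phi(v)\chi_{z_n}}>\varepsilon^2/2 \quad\text{for all } n,
\]
with $d_Y(z_n,z'_n)\to\infty$. This contradicts $\Phi(v)\in\cstu(Y)$, whose elements are norm limits of finite propagation operators (for the quasi-local version, it contradicts quasi-locality directly). Taking $x=x'$ and $e_n=\chi_{x_n}$, the same argument shows that the sets $Y_{x,\varepsilon}$ have uniformly bounded diameter. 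Symmetrically, the sets $X_{y,\varepsilon}$ have uniformly bounded diameter, and the analogous estimate holds for $\Phi^{-1}$.

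\emph{Conclusion.} Suppose $f(x)\in B_m(Y_{x,\varepsilon})$ for all $x$. Choose $z\in Y_{x,\varepsilon}$ and $z'\in Y_{x',\varepsilon}$ within $m$ of $f(x)$ and $f(x')$ respectively. If $d_X(x,x')\le r$, the key estimate gives
\[
d_Y(f(x),f(x'))\le 2m+s,
\]
so $f$ is coarse; by the same reasoning for $\Phi^{-1}$, $g$ is coarse. For $g\circ f\sim_{cl}\mathrm{Id}_X$, fix $x$ and pick $z\in Y_{x,\varepsilon}$ with $d_Y(z,f(x))\le m$. By the symmetry above, $x\in X_{z,\varepsilon}$. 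Since $g(z)\in B_m(X_{z,\varepsilon})$ and $X_{z,\varepsilon}$ has uniformly bounded diameter, $g(z)$ is uniformly close to $x$. Since $g$ is coarse and $d_Y(z,f(x))\le m$, $g(f(x))$ is uniformly close to $g(z)$. Together these give $g(f(x))$ uniformly close to $x$. The argument for $f\circ g\sim_{cl}\mathrm{Id}_Y$ is symmetric, so $f$ is a coarse equivalence with $g$ as a coarse inverse.

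The main obstacle is the diagonal step in the key estimate. One must choose the subsequence so that the block sum $v$ genuinely lies in $\cstu(X)$ and the cross terms in $\Phi(v)$ are negligible; I would handle this exactly as in the inductive construction of Lemma~\ref{lem:SOT}.
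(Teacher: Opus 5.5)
Most of your proposal is correct. The finiteness argument, the symmetry $z\in Y_{x,\varepsilon}\iff x\in X_{z,\varepsilon}$, the uniform estimate via a sparse sum of matrix units pushed through $\Phi=\Ad u$, and the closeness of $g\circ f$ and $f\circ g$ to the identities all work. This is essentially the standard \v{S}pakula--Willett argument, which the paper cites rather than proves.

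In the diagonal step you should say why $x_n\to\infty$. Otherwise infinitely many $z_n,z'_n$ lie in the finite set $Y_{B_r(F),\varepsilon}$ for some finite $F$, and then $d_Y(z_n,z'_n)$ is bounded. This fact gives disjointness of the $e_n$. Combined with compactness and $\SOTh$nullness, it also gives $z_n,z'_n\to\infty$, which is what kills the cross terms.

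\textbf{Gap.} The statement asserts that $f$ alone is a coarse equivalence once $f(x)\in B_m(Y_{x,\varepsilon})$ for all $x$, before any $g$ is introduced. Your conclusion produces a coarse inverse only if some $g$ with $g(y)\in B_m(X_{y,\varepsilon})$ exists, that is, only if every $X_{y,\varepsilon}$ is nonempty. Nothing in your argument yields this.

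Your tools do show that $f$ is a coarse embedding without any $g$. If $d_Y(f(x),f(x'))\le s$, pick $z\in Y_{x,\varepsilon}$ and $z'\in Y_{x',\varepsilon}$ within $m$ of $f(x)$ and $f(x')$. Then $x\in X_{z,\varepsilon}$, $x'\in X_{z',\varepsilon}$ and $d_Y(z,z')\le 2m+s$, so your estimate for $\Phi^{-1}$ bounds $d_X(x,x')$.

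Coarse density of $f(X)$ is different. It requires excluding points $y$ for which every coefficient of $u^*\delta_y$ is uniformly tiny. That is exactly the non-elementary content of Lemma~\ref{lem:Inv} (\cite[Lemma 3.2]{SquareInventiones}), and it is where property A was needed in \cite{SpakulaWillett2013AdvMath}.

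\textbf{Fix.} Apply Lemma~\ref{lem:Inv} with some $\delta<1$ to get $\varepsilon_0>0$ with $X_{y,\varepsilon_0}\neq\emptyset$ for all $y$; this holds because $\norm{\Phi^{-1}(\chi_y)}=1$. Put $\varepsilon'=\min\{\varepsilon,\varepsilon_0\}$ and note that $B_m(Y_{x,\varepsilon})\subseteq B_m(Y_{x,\varepsilon'})$. Choose $g(y)\in X_{y,\varepsilon'}$ and run your argument at level $\varepsilon'$.

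The paper only uses the coarseness of $f$ (in Lemma~\ref{lemma:itholds}) and the mutual-inverse assertion when both $f$ and $g$ are given (in Lemma~\ref{lemma:hall}). For those uses your proof suffices as written.
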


All strategies to obtain coarse equivalences from isomorphisms of uniform Roe algebras follow the approach of \cite{SpakulaWillett2013AdvMath}, who showed (in the property A setting) the existence of $\varepsilon>0$ such that all sets of the form $Y_{x,\varepsilon}$ are nonempty, and then applied Lemma~\ref{lem:JanRufus}. In \cite[Lemma 3.2]{SquareInventiones} the following stronger result (which does not need any geometric assumption) was proved.

\begin{lemma}\label{lem:Inv}
For every $\delta>0$ there is $\varepsilon>0$ such that for every  $x\in X$ and $y\in Y$ we have that 
\[
\max \left\{\norm{(1-\chi_{Y_{x,\varepsilon}})\Phi(\chi_x)},\norm{(1-\chi_{X_{y,\varepsilon}})\Phi^{-1}(\chi_y)}\right\}<\delta.\]
Consequently for every $F\in\Fin(X)$ and $\delta>0$ we can find $\varepsilon>0$ such that 
\begin{equation}\label{eqn1}
\norm{(1-\chi_{Y_{F,\varepsilon}})\Phi(\chi_F)}<\delta.
\end{equation}
\end{lemma}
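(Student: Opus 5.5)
The plan is to work with unit vectors. Since $\Phi$ maps compact operators onto compact operators and preserves minimal projections, each $\Phi(\chi_x)$ is a rank-one projection. Indeed $\Phi$ is spatially implemented, $\Phi=\Ad u$ for a unitary $u\colon\ell_2(X)\to\ell_2(Y)$, so $\Phi(\chi_x)$ is the projection onto $\mathbb C\xi_x$ with $\xi_x=u\delta_x$. Then $\norm{\Phi(\chi_x)\chi_z}=|\xi_x(z)|$, so $Y_{x,\varepsilon}=\{z\mid |\xi_x(z)|>\varepsilon\}$, and
\[
\norm{(1-\chi_{Y_{x,\varepsilon}})\Phi(\chi_x)}=\norm{(1-\chi_{Y_{x,\varepsilon}})\xi_x}.
\]
So the first assertion says that, uniformly in $x$, the $\ell_2$-mass of $\xi_x$ carried by coordinates of modulus at most $\varepsilon$ tends to $0$ as $\varepsilon\to0$. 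The statement for $\Phi^{-1}$ is symmetric.

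To prove this, I would argue by contradiction, in the style of Proposition~\ref{prop:Higson2}. Fix $\delta$ witnessing failure, and pick $x_n$ with $\norm{(1-\chi_{Y_{x_n,1/n}})\xi_{x_n}}\geq\delta$. For a fixed $x$ this quantity tends to $0$, so the $x_n$ may be taken distinct. Each $\xi_{x_n}$ is essentially finitely supported, and $(\xi_{x_n})$ is weakly null, so a gliding-hump argument lets me pass to a subsequence in which the $\xi_{x_n}$ are, up to summable errors, supported on pairwise disjoint finite sets $G_n\subseteq Y$. I would then consider the strong sums $\Phi(\chi_{A_M})=\sum_{n\in M}\Phi(\chi_{x_n})$ with $A_M=\{x_n\mid n\in M\}$, which lie in $\cstu(Y)$.

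The key input is uniform approximability. I would use the result of Braga--Farah that the family $\{\Phi(\chi_A)\mid A\subseteq X\}$ is uniformly approximable: for every $\gamma>0$ there is $r$ such that each $\Phi(\chi_A)$ is within $\gamma$ of an operator $b_A$ of propagation at most $r$. This follows from a Baire category argument over the compact space $\{0,1\}^{\N}$ parametrising the sums. Writing $p_n=\Phi(\chi_{x_n})$ and compressing $b$ by $\chi_{G_n}$, I get a propagation-$r$ operator $b_n$ with $\norm{p_n-b_n}\lesssim\gamma$. Since $p_n\delta_z=\overline{\xi_{x_n}(z)}\,\xi_{x_n}$ and $b_n\delta_z$ is supported in $B_r(z)$, for every $z$ this yields
\[
|\xi_{x_n}(z)|\,\norm{(1-\chi_{B_r(z)})\xi_{x_n}}\lesssim\gamma.
\]
Let $N=\sup_y|B_r(y)|$. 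Summing this estimate over the coordinates $z$ outside $Y_{x_n,1/n}$, weighted by $|\xi_{x_n}(z)|$, and using that a ball of radius $r$ carries at most $N/n^2$ of the mass of the small-coordinate part, should force that part to have mass $O(\gamma)$ once $n\gg\sqrt N$. Choosing $\gamma\ll\delta$ then gives the contradiction.

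The main obstacle is this final summation. The naive estimate controls only the largest coordinate of $\xi_{x_n}$, and I must convert the pointwise bounds into an $\ell_2$ bound on the whole spread-out part. What I would try first is to bound $\langle b_n\eta,\eta\rangle$ directly for $\eta=(1-\chi_{Y_{x_n,1/n}})\xi_{x_n}$. We have $\langle p_n\eta,\eta\rangle=\norm{\eta}^4\geq\delta^4$, while a propagation-$r$ operator of norm at most $2$ satisfies $|\langle b_n\eta,\eta\rangle|\leq2\sum_z|\eta(z)|\,\norm{\chi_{B_r(z)}\eta}$. A Schur-type estimate combined with the concentration bounds above should make this small when all coordinates of $\eta$ are at most $1/n$.

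Finally, the consequence \eqref{eqn1} follows from the first part. Given $F\in\Fin(X)$ and $\delta>0$, apply the first part with $\delta/|F|$. Since $\chi_{Y_{F,\varepsilon}}\geq\chi_{Y_{x,\varepsilon}}$ for each $x\in F$,
\[
\norm{(1-\chi_{Y_{F,\varepsilon}})\Phi(\chi_F)}\leq\sum_{x\in F}\norm{(1-\chi_{Y_{x,\varepsilon}})\Phi(\chi_x)}<\delta.
\]
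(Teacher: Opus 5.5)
\emph{This proposal has a genuine gap: its core step is false, and so is the estimate you planned to use there.}

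Your preliminary reductions are fine: the spatial implementation, the identity $\norm{\Phi(\chi_x)\chi_z}=|\xi_x(z)|$, and the passage to \eqref{eqn1}. The core step fails for a structural reason, not a technical one. Uniform approximability of disjointly supported rank-one projections $p_n$ does not force the vectors $\xi_n$ to concentrate on large coordinates, and ghost projections show this.

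Let $Y=\bigsqcup_n G_n$ be a coarse disjoint union of expanders, $\xi_n$ the normalised indicator vector of $G_n$, and $p_n$ the projection onto $\mathbb C\xi_n$. By the spectral gap, $P=\sum_n p_n$ is a continuous function of the propagation-one Laplacian. So every $\sum_{n\in M}p_n=\chi_{\bigcup_{n\in M}G_n}P$ is $\gamma$-close to an operator of propagation at most $r(\gamma)$, uniformly in $M$. Yet $\norm{\xi_n}_\infty=|G_n|^{-1/2}\to 0$, so all of $\xi_n$ is the \emph{small-coordinate part}, while $|\langle b_n\xi_n,\xi_n\rangle-1|\leq\gamma$. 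Accordingly, your Schur-type bound $2\sum_z|\eta(z)|\norm{\chi_{B_r(z)}\eta}$ is at least $2$ for $\eta=\xi_n$. The pointwise bound $|\xi(z)|\norm{(1-\chi_{B_r(z)})\xi}\lesssim\gamma$ also says nothing once $|\xi(z)|\leq\gamma$.

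Worse, take $X=\{2^n\}\subseteq\mathbb Z$, so that $\cstu(X)=\ell_\infty(X)+\mathcal K(\ell_2(X))$. Then $\Ad(V)$, with $V\delta_{2^n}=\xi_n$, is a strongly continuous, rank-preserving embedding $\cstu(X)\to\cstu(Y)$. For it, $\norm{(1-\chi_{Y_{2^n,\varepsilon}})\Ad(V)(\chi_{2^n})}=1$ as soon as $|G_n|^{-1/2}\leq\varepsilon$. Your argument only uses properties this embedding shares, so it cannot be completed as planned.

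The missing idea is that $\Phi$ is onto. You have to use $\Phi^{-1}$ on the masa $\ell_\infty(Y)$, together with the duality $|\xi_x(y)|=|\eta_y(x)|$, where $\Phi=\Ad(U)$, $\xi_x=U\delta_x$ and $\eta_y=U^*\delta_y$. This is the \emph{symmetry} used in the proof of Lemma~\ref{lemma:itholds}. The paper does not reprove the lemma; it imports it from \cite[Lemma 3.2]{SquareInventiones} (see also \cite{MartinezVigolo}).

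Your own tool can close the gap, as follows.
\begin{itemize}
\item Apply Braga--Farah \cite{BragaFarah2018} to $\Phi^{-1}$. This gives, for each $\gamma>0$, some $R$ with $\norm{(1-\chi_{B_R(x)})\Phi^{-1}(u_\sigma)\delta_x}\leq\gamma$ for all $x\in X$ and all sign unitaries $u_\sigma=\sum_y\sigma(y)\chi_y$, $\sigma\in\{\pm1\}^Y$.
\item The $x'$-coordinate of $\Phi^{-1}(u_\sigma)\delta_x$ is $\sum_y\sigma(y)\xi_x(y)\overline{\xi_{x'}(y)}$, and $\sum_{x'\in X}|\xi_{x'}(y)|^2=\norm{\eta_y}^2=1$. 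Averaging over independent uniform signs therefore gives
\[
\gamma^2\geq\mathbb E_\sigma\norm{(1-\chi_{B_R(x)})\Phi^{-1}(u_\sigma)\delta_x}^2=\sum_{y\in Y}|\xi_x(y)|^2\bigl(1-w_x(y)\bigr),\qquad w_x(y)=\sum_{x'\in B_R(x)}|\xi_{x'}(y)|^2 .
\]
\item Since $\sum_y w_x(y)=|B_R(x)|\leq N_R:=\sup_{x'}|B_R(x')|$, the set $\{y: w_x(y)\geq 1/2\}$ has at most $2N_R$ points. It carries all but $2\gamma^2$ of the mass of $\xi_x$. Hence $\norm{(1-\chi_{Y_{x,\varepsilon}})\Phi(\chi_x)}^2\leq 2\gamma^2+2N_R\varepsilon^2$.
\item Choose $\gamma$, then $R$, then $\varepsilon$. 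Exchange the roles of $\Phi$ and $\Phi^{-1}$ for the other half of the statement.
\end{itemize}
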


 For a given $\delta>0$, we would like find a single $\varepsilon>0$ that makes \eqref{eqn1} true independently of the finite set $F$. This is formalised by the following condition, for $\delta\in (0,1)$:

\begin{align*}\label{thegoal}\tag{GOAL($\delta$)}
&\exists \varepsilon>0 \exists m\in\mathbb N \text{ s.t. }\forall A\in \Fin(X)\ \,\forall B\in\Fin(Y) \\
&\max \{\norm{(1-\chi_{B_m(Y_{A,\varepsilon})})\Phi(\chi_A)}, \norm{(1-\chi_{B_m(X_{B,\varepsilon})})\Phi^{-1}(\chi_B)}\}< \delta.
\end{align*}
This is the norm estimate we mentioned in \S\ref{ss:strat}.
\begin{lemma}\label{lemma:hall}
Suppose \eqref{thegoal} holds for some $\delta\in (0,1)$, as witnessed by $\varepsilon$ and $m$. Then for all $A\in\Fin(X)$ and $B\in\Fin(Y)$ we have that
\[
|A|\leq \Big|\bigcup_{x\in A}B_m(Y_{x,\varepsilon})\Big| \text{ and } |B|\leq \Big|\bigcup_{y\in B}B_m(X_{y,\varepsilon})\Big|.
\]
Consequently, if $f\colon X\to Y$ is a function such that $f(x)\in B_m(Y_{x,\varepsilon})$ for all $x\in X$ then $f$ is close to a bijective coarse equivalence.
\end{lemma}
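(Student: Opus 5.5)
The plan is to obtain the cardinality inequality from a rank comparison and then feed it into Lemma~\ref{lemma:alphabeta}. Fix $A\in\Fin(X)$ and put $C=\bigcup_{x\in A}B_m(Y_{x,\varepsilon})=B_m(Y_{A,\varepsilon})$. This set is finite, since each $Y_{x,\varepsilon}$ is finite by Lemma~\ref{lem:JanRufus} and $Y$ is u.l.f.

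The projection $p=\Phi(\chi_A)$ has rank $|A|$. Indeed, $\Phi$ is a $^*$-isomorphism that sends compacts onto compacts, so it restricts to an isomorphism $\mathcal K(\ell_2(X))\to\mathcal K(\ell_2(Y))$. Such an isomorphism is implemented by a unitary, so it preserves ranks of projections. Alternatively, $\chi_A$ is a sum of $|A|$ mutually orthogonal minimal projections, and these are sent to minimal projections, which have rank one.

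Assume \eqref{thegoal} holds with $\varepsilon,m$. Then $\norm{(1-\chi_C)p}<\delta<1$. We claim $\chi_C$ restricted to the range of $p$ is injective. If $\xi=p\xi$ with $\norm{\xi}=1$ and $\chi_C\xi=0$, then $\norm{(1-\chi_C)p\xi}=\norm{\xi}=1$, which contradicts the estimate. Hence
\[
|A|=\rank(p)\le\rank(\chi_C)=|C|.
\]
The symmetric argument with $\Phi^{-1}$, $B\in\Fin(Y)$, and the second term of \eqref{thegoal} gives $|B|\le|B_m(X_{B,\varepsilon})|$.

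For the consequence, set $\alpha(x)=B_m(Y_{x,\varepsilon})$ and $\beta(y)=B_m(X_{y,\varepsilon})$. These are finite sets. Condition~\ref{Bijc2} is exactly what was just shown. Condition~\ref{Bijc1} follows from Lemma~\ref{lem:JanRufus}, since any selections $f(x)\in\alpha(x)$ and $g(y)\in\beta(y)$ are mutually inverse coarse equivalences. Lemma~\ref{lemma:alphabeta} then shows that any such $f$ is close to a bijective coarse equivalence.

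The only step that needs care is the rank preservation of $\Phi$ on finite-rank projections. It rests on the standard fact, already used in the paper, that $\Phi$ maps $\mathcal K$ onto $\mathcal K$, together with the fact that automorphisms of compacts are spatial. All other steps are routine.
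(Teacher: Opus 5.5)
Your proposal is correct and follows essentially the same route as the paper. Rank preservation of $\Phi$ gives $\rank(\Phi(\chi_A))=|A|$, and the estimate $\norm{(1-\chi_C)\Phi(\chi_A)}<\delta<1$ then forces $|A|\le|C|$; you phrase this as injectivity of $\chi_C$ on the range of $\Phi(\chi_A)$, while the paper argues by contradiction with a unit vector in $\Img(1-\chi_C)\cap\Img(\Phi(\chi_A))$. The consequence then follows from Lemma~\ref{lemma:alphabeta} with $\alpha(x)=B_m(Y_{x,\varepsilon})$ and $\beta(y)=B_m(X_{y,\varepsilon})$. You are somewhat more explicit than the paper in justifying rank preservation and in checking \ref{Bijc1} via Lemma~\ref{lem:JanRufus}.
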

\begin{proof}
We reason as in \cite[Lemma 6.8]{BragaFarahVignati2019} (see also \cite[Lemma 6.9]{WhiteWillett2017}). Fix $A\in \Fin(X)$, and suppose that $\norm{(1-\chi_{B_m(Y_{A,\varepsilon})})\Phi(\chi_A)}<\delta$. Note that
\[
B_m(Y_{A,\varepsilon})=\bigcup_{x\in A}B_m(Y_{x,\varepsilon}).
\]
Assume for a contradiction that $|A|>|B_m(Y_{A,\varepsilon})|$. Since $\Phi$ is rank preserving, \[
|A|=\rank(\chi_A)=\rank(\Phi(\chi_A))\text{ and }|B_m(Y_{A,\varepsilon})|=\rank(\chi_{B_m(Y_{A,\varepsilon})}).
\]
If $|A|>|B_m(Y_{A,\varepsilon})|$, then 
\[
\rank(\Phi(\chi_A))> \rank(\chi_{B_m(Y_{A,\varepsilon})})=\corank(1-\chi_{B_m(Y_{A,\varepsilon})}). 
\]
We can then find a  unit vector $\xi\in \Img(1-\chi_{B_m(Y_{A,\varepsilon})})\cap\Img(\Phi(\chi_A))$, and so 
\[
1=\norm{(1-\chi_{B_m(Y_{A,\varepsilon})})\Phi(\chi_A)\xi}\leq \norm{(1-\chi_{B_m(Y_{A,\varepsilon})})\Phi(\chi_A)}<\delta.
\]
This is a contradiction. The same exact proof gives that $|B|\leq |B_m(X_{B,\varepsilon})|$ for any given $B\in\Fin(Y)$.

The last statement follows from Lemma~\ref{lemma:alphabeta} applied to the functions $\alpha$ and $\beta$ defined by \[
\alpha(x)=B_m(Y_{x,\varepsilon}) \text{ and }\beta(y)=B_m(X_{y,\varepsilon}).\qedhere
\]
\end{proof}

The rest of this section is dedicated to prove \eqref{thegoal} for some $\delta\in(0,1)$.

\begin{lemma}\label{lem:deltahalf}
Let $\delta\in (0,1)$. If \eqref{thegoal} fails, then 
\begin{enumerate}[label=(F\arabic*)]
    \item \label{Fin1}
for every $F\in\Fin(X)$, every $\varepsilon>0$ and every $m>0$ there is $A\in\Fin(X\setminus F)$ such that 
\[
\norm{(1-\chi_{B_m(Y_{A,\varepsilon})})\Phi(\chi_{A})}>\delta/2,
\]
or
\item \label{Fin2} for every $G\in\Fin(Y)$, every $\varepsilon>0$ and every $m>0$ there is $B\in\Fin(Y\setminus G)$ such that 
\[
\norm{(1-\chi_{B_m(X_{B,\varepsilon})})\Phi^{-1}(\chi_{B})}>\delta/2.
\]
\end{enumerate}
\end{lemma}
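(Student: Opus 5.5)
We argue by contraposition: assuming both \ref{Fin1} and \ref{Fin2} fail, we show that \eqref{thegoal} holds. If \ref{Fin1} fails, there are $F\in\Fin(X)$, $\varepsilon_1>0$ and $m_1>0$ such that for every $A\in\Fin(X\setminus F)$ we have $\norm{(1-\chi_{B_{m_1}(Y_{A,\varepsilon_1})})\Phi(\chi_A)}\leq\delta/2$. Symmetrically, if \ref{Fin2} fails, there are $G\in\Fin(Y)$, $\varepsilon_2>0$ and $m_2>0$ with the analogous bound for all $B\in\Fin(Y\setminus G)$. The plan is to absorb the finite sets $F$ and $G$ using Lemma~\ref{lem:Inv}, and then combine the two pieces with the triangle inequality.

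\textbf{Step 1 (the finite exceptional set).} Apply the second part of Lemma~\ref{lem:Inv} to the finite set $F$ with tolerance $\delta/4$, obtaining $\varepsilon_F>0$ such that $\norm{(1-\chi_{Y_{F,\varepsilon_F}})\Phi(\chi_F)}<\delta/4$. Put $\varepsilon=\min\{\varepsilon_1,\varepsilon_F,\varepsilon_2,\varepsilon_G\}$ (with $\varepsilon_G$ obtained in the same way for $G$ and $\Phi^{-1}$) and $m=\max\{m_1,m_2\}$. Since $Y_{x,\varepsilon}$ grows as $\varepsilon$ decreases and $B_m$ grows with $m$, all the projections $1-\chi_{(\cdot)}$ shrink. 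Hence the previous bounds persist for the new parameters: if $P\le Q$ are diagonal projections, then $\norm{(1-Q)T}=\norm{(1-Q)(1-P)T}\le\norm{(1-P)T}$.

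\textbf{Step 2 (splitting an arbitrary set).} Given $A\in\Fin(X)$, write $A=A_1\sqcup A_2$ with $A_1=A\cap F$ and $A_2=A\setminus F$, so $\Phi(\chi_A)=\Phi(\chi_{A_1})+\Phi(\chi_{A_2})$. Let $Q=\chi_{B_m(Y_{A,\varepsilon})}$; it dominates both $\chi_{B_m(Y_{A_2,\varepsilon})}$ and $\chi_{Y_{A_1,\varepsilon}}$. For the second term we get $\norm{(1-Q)\Phi(\chi_{A_2})}\le\delta/2$. For the first term, we would like $\norm{(1-\chi_{Y_{A_1,\varepsilon}})\Phi(\chi_{A_1})}<\delta/4$. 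This needs care because $A_1$ is only a subset of $F$. The remedy is to apply Lemma~\ref{lem:Inv} to each of the finitely many subsets of $F$ and take the minimum of the resulting $\varepsilon$'s, so that the bound holds for every $A_1\subseteq F$ simultaneously.

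This gives $\norm{(1-Q)\Phi(\chi_A)}<\delta/2+\delta/4<\delta$. The same argument applied to $G$ and $\Phi^{-1}$ handles every $B\in\Fin(Y)$, so \eqref{thegoal} holds with these $\varepsilon$ and $m$. This contradicts the assumption that \eqref{thegoal} fails, so at least one of \ref{Fin1} and \ref{Fin2} must hold.

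\textbf{Main obstacle.} The only delicate point is the uniformity over subsets of $F$ in Step 2; it is resolved by the finiteness of the power set of $F$. A minor bookkeeping issue is strict versus non-strict inequality. Negating \ref{Fin1} gives $\le\delta/2$, and the total bound $\delta/2+\delta/4$ is still strictly below $\delta$, which is what \eqref{thegoal} requires.
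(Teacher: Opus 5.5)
Your proposal is correct and follows essentially the same route as the paper: assume both \ref{Fin1} and \ref{Fin2} fail, use Lemma~\ref{lem:Inv} uniformly over the finitely many subsets of the exceptional finite set to control $\Phi(\chi_{A\cap F})$, bound $\Phi(\chi_{A\setminus F})$ by the negation of \ref{Fin1}, and combine the two via the triangle inequality and the monotonicity of the projections in $\varepsilon$ and $m$. The differences are cosmetic: you use tolerance $\delta/4$ where the paper uses $\delta/2$ on the finite part, and you explicitly merge the parameters for $\Phi$ and $\Phi^{-1}$ by taking a minimum and a maximum, which the paper leaves implicit. Just make sure the $\varepsilon$ you fix in Step 1 is already the minimum over all subsets of $F$ (and of $G$), as you point out in Step 2.
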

\begin{proof}
Assume \ref{Fin1} fails and fix $F\in\Fin(X)$, $\varepsilon>0$ and $m\in\mathbb N$ such that for every $A\in\Fin(X\setminus F)$ we have
\[
\norm{(1-\chi_{B_m(Y_{A,\varepsilon})})\Phi(\chi_{A})}\leq\delta/2.
\]
By Lemma~\ref{lem:Inv}, we can find $\gamma\in (0,\varepsilon)$ such that $\norm{(1-\chi_{Y_{F',\gamma}})\Phi(\chi_{F'})}<\delta/2$ for every $F'\subseteq F$. Pick now an arbitrary $C\in\Fin(X)$. Note that if $C'\subseteq C$, $k'\leq k$ and $\eta\leq\eta'$ then $B_{k'}(Y_{C',\eta'})\subseteq B_k(Y_{C,\eta})$ and so 
\[
1-\chi_{B_k(Y_{C,\eta})}\leq 1-\chi_{B_{k'}(Y_{C',\eta'})}.
\]
Then
\begin{eqnarray*}
\norm{(1-\chi_{B_m(Y_{C,\gamma})})\Phi(\chi_C)}&\leq& \norm{(1-\chi_{B_m(Y_{C,\gamma})})\Phi(\chi_{C\cap F})}+\norm{(1-\chi_{B_m(Y_{C,\gamma})})\Phi(\chi_{C\setminus F})}\\
&\leq&\norm{(1-\chi_{B_m(Y_{C\cap F,\gamma})})\Phi(\chi_{C\cap F})}+\norm{(1-\chi_{B_m(Y_{C\setminus F,\gamma})})\Phi(\chi_{C\setminus F})}\\
&\leq&\delta/2+\norm{(1-\chi_{B_m(Y_{C\setminus F,\varepsilon})})\Phi(\chi_{C\setminus F})}\\
&<&\delta/2+\delta/2=\delta.
\end{eqnarray*}
The same exact argument gives that if \ref{Fin2} fails then we can find $\gamma>0$ and $m$ such that for every $D\in \Fin(Y)$ we have that 
\[
\norm{(1-\chi_{B_m(X_{D,\gamma})})\Phi^{-1}(\chi_D)}.
\]
We have shown that if both \ref{Fin1} and \ref{Fin2} fail, \eqref{thegoal} holds. Contrapositively, if \eqref{thegoal}, (at least) one of \ref{Fin1} and \ref{Fin2} must hold.
\end{proof}

\begin{lemma}\label{lemma:itholds}
For every $\delta\in (0,1)$, \eqref{thegoal} holds.
\end{lemma}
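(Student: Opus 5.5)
The plan is to argue by contradiction using Lemma~\ref{lem:deltahalf}. Fix $\delta\in(0,1)$ and suppose \eqref{thegoal} fails. Since the hypotheses on $\Phi$ and $\Phi^{-1}$ are symmetric (Proposition~\ref{prop:Higson2} applies equally to $\Phi^{-1}\colon\cstu(Y)\to\cstu(X)$), we may assume \ref{Fin1} holds. We then produce a single $F\in\Fin(X)$, $\varepsilon_0>0$ and $m'$ such that every $A\in\Fin(X\setminus F)$ satisfies
\[
\norm{(1-\chi_{B_{m'}(Y_{A,\varepsilon_0})})\Phi(\chi_A)}\leq\delta/2,
\]
which contradicts \ref{Fin1}.

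\textbf{Step 1 (replace $\chi_A$ by a flattened function).} Pick small $\theta,\eta>0$, for instance with $\theta+\eta<\delta/2$. Apply Proposition~\ref{prop:Higson2} with $\theta$ to obtain $m$ and $F$ such that, writing $g=g_{A,m}$ and $d=\mathbb E_Y(\Phi(g))$, we have $\norm{\Phi(g)-d}<\theta$ for all $A\in\Fin(X\setminus F)$. Since $g\chi_A=\chi_A$, for any $S\subseteq Y$ we get
\[
(1-\chi_S)\Phi(\chi_A)=(1-\chi_S)\Phi(g)\Phi(\chi_A)\approx_\theta (1-\chi_S)d\,\Phi(\chi_A)=d(1-\chi_S)\Phi(\chi_A),
\]
where the last equality holds because $d$ is diagonal. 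Hence $\norm{(1-\chi_S)\Phi(\chi_A)}\le\theta+\norm{d(1-\chi_S)}$. Take $S=\{z\in Y\mid d(z)\geq\eta\}$; then $\norm{(1-\chi_S)\Phi(\chi_A)}\leq\theta+\eta<\delta/2$.

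\textbf{Step 2 (locate $S$ uniformly).} It remains to show that $S\subseteq B_{m'}(Y_{A,\varepsilon_0})$ for constants $m'$ and $\varepsilon_0$ independent of $A$. This is the main obstacle. Fix $z\in S$. Since $0\le g\le\chi_{B_m(A)}$, we have
\[
\eta\le d(z)=\langle\Phi(g)\delta_z,\delta_z\rangle\le\norm{\Phi(\chi_{B_m(A)})\delta_z}^2 .
\]
Hence $\norm{\Phi^{-1}(\chi_z)\chi_{B_m(A)}}=\norm{\chi_z\Phi(\chi_{B_m(A)})}\ge\sqrt\eta$. By Lemma~\ref{lem:Inv}, applied with $\delta'<\sqrt\eta$, there is a uniform $\varepsilon$ with $\norm{(1-\chi_{X_{z,\varepsilon}})\Phi^{-1}(\chi_z)}<\delta'$. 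Therefore $X_{z,\varepsilon}\cap B_m(A)\neq\emptyset$: there is $w\in X_{z,\varepsilon}$ with $d_X(w,x)\le m$ for some $x\in A$.

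\textbf{Step 3 (convert to distance bounds).} Fix, via Lemma~\ref{lem:Inv} with small $\delta$, some $\varepsilon_0$ for which every $Y_{x,\varepsilon_0}$ and $X_{y,\varepsilon}$ is nonempty. Then choose selections $f(x)\in Y_{x,\varepsilon_0}$ and $h(y)\in X_{y,\varepsilon}$. By Lemma~\ref{lem:JanRufus}, these are mutually inverse coarse equivalences. So $f(w)$ is within a uniform distance of $f(h(z))$, which is uniformly close to $z$; and since $d_X(w,x)\le m$, $f(w)$ is within a uniform distance of $f(x)\in Y_{x,\varepsilon_0}$. All the constants involved depend only on $\eta$, $m$, $\varepsilon$ and $\varepsilon_0$, never on $A$, and they yield the required $m'$. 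Thus $S\subseteq B_{m'}(Y_{A,\varepsilon_0})$.

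Combining the three steps gives the displayed estimate above with this $F$, $\varepsilon_0$ and $m'$, contradicting \ref{Fin1}. The delicate points are, first, ensuring the bound in Step 2 is independent of $A$. Any attempt to sum $\norm{\Phi(\chi_x)\delta_z}^2$ over $x\in B_m(A)$ would lose uniformity, which is why the argument passes through $\Phi^{-1}(\chi_z)$ instead. Second, Steps 1–3 require the right order of quantifiers: choose $\theta,\eta$, then $m$ and $F$, then $\varepsilon$, $\varepsilon_0$ and $m'$.
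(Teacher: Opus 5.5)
Your proof is correct and is essentially the paper's argument. Proposition~\ref{prop:Higson2} makes $\Phi(g_{A,m})$ nearly diagonal, Lemma~\ref{lem:Inv} applied to $\Phi^{-1}(\chi_z)$ forces $X_{z,\varepsilon}$ to meet $B_m(A)$, and coarseness of the selections from Lemma~\ref{lem:JanRufus} then places $z$ uniformly close to $Y_{A,\varepsilon_0}$. The differences are organisational: you prove a bound valid for every $A\in\Fin(X\setminus F)$ through the threshold set $S$, whereas the paper fixes one bad $A$ and one bad $y$ and reaches a contradiction.

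The one point you leave implicit is the uniform bound on $d_X(w,h(z))$ in Step 3. It needs $\sup_z \mathrm{diam}(X_{z,\varepsilon})<\infty$, which follows because any two selections $y\mapsto X_{y,\varepsilon}$ are close. The paper's choice of $m$, used together with the symmetry $w\in X_{z,\varepsilon}\iff z\in Y_{w,\varepsilon}$, rests on the same kind of uniformity and is asserted there without further proof.
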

\begin{proof}
Let $\delta\in (0,1)$, and suppose that \eqref{thegoal} fails. Without loss of generality, we assume condition \ref{Fin1} of Lemma~\ref{lem:deltahalf} holds. (In case condition \ref{Fin2} holds, we repeat the proof using $\Phi^{-1}$ instead of $\Phi$.)

The functions $g_{A,r}$, for $A\subseteq X$ and $r\geq 0$, were defined just before Proposition~\ref{prop:Higson2}. Applying Proposition~\ref{prop:Higson2}, we can find $r\geq 0$ and  $F\in\Fin(X)$ such that for every $A\in\Fin(X\setminus F)$
\begin{equation}\label{eqn2}
\norm{\mathbb E_Y(\Phi(g_{A,r}))-\Phi(g_{A,r})}<\delta/8    
\end{equation}
Fix now $\varepsilon>0$ small enough such that $\norm{(1-\chi_{X_{y,\varepsilon}})\Phi^{-1}(\chi_y)}<\delta/4$ for all $y\in Y$. Further, let $m$ be such that if $x,x'\in X$ are such that $d_X(x,x')\leq r$, then for every  $y\in Y_{x,\varepsilon}$ and $y'\in Y_{x',\varepsilon}$ one has that $d_Y(y,y')\leq m$. Such an $\varepsilon$ exists by Lemma~\ref{lem:Inv}, and the existence of $m$ is granted by the fact that every association $x\mapsto y\in Y_{x,\varepsilon}$ is coarse (Lemma~\ref{lem:JanRufus}).

Since condition \ref{Fin1} of  Lemma~\ref{lem:deltahalf} holds, we can find $A\in\Fin(X\setminus B_r(F))$ such that 
\[
\norm{(1-\chi_{B_m(Y_{A,\varepsilon})})\Phi(\chi_{A})}>\delta/2.
\]
Since $g_{A,r}\chi_{A}=\chi_{A}$, then
\[
\delta/2<\norm{(1-\chi_{B_m(Y_{A,\varepsilon})})\Phi(g_{A,r})\Phi(\chi_{A})}\leq \norm{(1-\chi_{B_m(Y_{A,\varepsilon})})\Phi(g_{A,r})}.
\]
By Equation~\eqref{eqn2} we have that
\[
\norm{(1-\chi_{B_m(Y_{A,\varepsilon})})\mathbb E_Y(\Phi(g_{A,r}))}>\frac{3\delta}{8}.
\]
As both $(1-\chi_{B_m(Y_{A,\varepsilon})})$ and $\mathbb E_Y(\Phi(g_{A,r}))$ belong to $\ell_\infty(Y)$, where the norm is given by the sup norm, we can find $y\in Y\setminus B_m(Y_{A,\varepsilon})$ such that $\norm{\mathbb E_Y(\Phi(g_{A,r}))\chi_y}>3\delta/8$. Again applying Equation~\eqref{eqn2} we obtain that $\norm{\Phi(g_{A,r})\chi_y}>\delta/4$. Since $\Phi^{-1}$ is an isometry,
\[
\norm{g_{A,r}\Phi^{-1}(\chi_y)}=\norm{\Phi(g_{A,r})\chi_y}>\delta/4.
\]
Since $g_{A,r}$ is supported on $B_r(A)$, we have that $g_{A,r}\chi_{B_r(A)}=g_{A,r}$ and consequently
\[
\norm{\chi_{B_r(A)}\Phi^{-1}(\chi_y)}>\delta/4.
\]
Since our choice of $\varepsilon$ gives that $\norm{(1-\chi_{X_{y,\varepsilon}})\Phi^{-1}(\chi_y)}<\delta/4$, $B_r(A)$ must intersect $X_{y,\varepsilon}$. Pick 
\[
x\in B_r(A)\cap X_{y,\varepsilon},
\]
and note that, by symmetry, $y\in Y_{x,\varepsilon}$. Let $x'\in A$ with $d_X(x,x')\leq r$, and let $y'\in Y_{x',\varepsilon}\subseteq Y_{A,\varepsilon}$. By our choice of $m$, then $d_Y(y,y')\leq m$, and consequently $y\in B_m(Y_{A,\varepsilon})$. This is a contradiction.
\end{proof}
\begin{proof}[Proof of Theorems~\ref{thm:main} and~\ref{thm:mainQL}]
By Lemma~\ref{lemma:hall}, it is enough to show that \eqref{thegoal} holds for some $\delta\in (0,1)$. This is the thesis of Lemma~\ref{lemma:itholds}
\end{proof}

\section{Corollaries and concluding remarks}\label{S.CorConcl}

Here we prove Theorems~\ref{thmi:main2} and \ref{thmi:main3}, two corollaries of the proof of Theorem~\ref{thm:main} on the uniqueness of certain Cartan subalgebras of uniform Roe algebras and automorphisms thereof.

\subsection{Roe Cartan subalgebras}
As mentioned uniqueness of Cartan masas does not hold in uniform Roe algebras, as there might exist \emph{exotic} Cartan masas which are not isomorphic to $\ell_\infty(\N)$ (see \cite[\S3]{WhiteWillett2017}). Therefore, to obtain uniqueness result, we need to focus on specific Cartan masas.

For an inclusion of \cstar-algebras $A\subseteq B$ we say that $A$ is co-separable in $B$ if there is a countable $S\subseteq B$ such that $B=\mathrm{C}^*(A,S)$, meaning that $A$ and $S$ generated $B$ as a \cstar-algebra. 
\begin{definition}\label{def:RoeCartan}
Let $X$ be a u.l.f.\ metric space. A \cstar-subalgebra $A\subseteq\cstu(X)$ is said to be a Roe Cartan masa if $A$ is a co-separable Cartan masa in $\cstu(X)$ which is isomorphic to $\ell_\infty(\N)$.
\end{definition}

The following `uniqueness of Roe Cartan subalgebras' result was proved as Theorem E in \cite{WhiteWillett2017}:
\begin{theorem}\label{thm:WWCartan}
Let $X$ be a u.l.f.\ property A metric space, and let $A\subseteq\cstu(X)$ be a Roe Cartan masa. Then there is a unitary $u\in\cstu(X)$ such that $A=u\ell_\infty(X)u^*$.
\end{theorem}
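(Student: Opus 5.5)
This statement is White and Willett's Theorem~E, quoted from \cite{WhiteWillett2017}. In this paper it matters mainly as the model for Theorem~\ref{thmi:main2}, which drops the property A hypothesis. I would prove it in the generality of Theorem~\ref{thmi:main2}, following the strategy of \cite{WhiteWillett2017} but replacing the single use of property A by Lemma~\ref{lemma:itholds}.

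\emph{Step 1: realise the Cartan pair geometrically.} By \cite[Theorem~B]{WhiteWillett2017}, every Roe Cartan pair is isomorphic to a canonical one. So there are a u.l.f.\ metric space $Z$ and a $^*$-isomorphism $\Phi\colon\cstu(Z)\to\cstu(X)$ with $\Phi(\ell_\infty(Z))=A$. This step uses no geometric hypothesis.

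\emph{Step 2: obtain a bijection from the proof of Theorem~\ref{thm:main}.} Lemma~\ref{lemma:itholds}, applied to $\Phi$, gives \eqref{thegoal} for some $\delta\in(0,1)$ without assuming property A. Lemma~\ref{lemma:hall} then supplies $\varepsilon$, $m$ and a bijective coarse equivalence $h\colon Z\to X$ with $h(z)\in B_m(Z_{z,\varepsilon})$; here $Z_{z,\varepsilon}=\{x\in X\mid\norm{\Phi(\chi_z)\chi_x}>\varepsilon\}$ plays the role of $Y_{x,\varepsilon}$. Let $u_h$ be the permutation unitary $\delta_z\mapsto\delta_{h(z)}$. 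Then $\Ad u_h$ is an isomorphism $\cstu(Z)\to\cstu(X)$ carrying $\ell_\infty(Z)$ onto $\ell_\infty(X)$.

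\emph{Step 3: compare $\Phi$ with $\Ad u_h$.} Set $\Psi=\Phi\circ\Ad u_h^{-1}$, an automorphism of $\cstu(X)$ with $\Psi(\ell_\infty(X))=A$. I would define
\[
v=\sum_{x\in X}\Psi(e_{x})\,e_{x},
\]
where $e_x$ is the matrix unit onto $\delta_x$, with the phases chosen so that $v\delta_x$ is a unit vector in the range of the rank one projection $\Psi(\chi_x)$. Since the $\Psi(\chi_x)$ are pairwise orthogonal rank one projections summing strongly to $1$, the operator $v$ is a unitary in $\mathcal B(\ell_2(X))$. It satisfies $\Ad v=\Psi$ and $v\ell_\infty(X)v^*=A$, and taking adjoints gives the required unitary (the statement's $u$, or the $v$ of Theorem~\ref{thmi:main2} after relabelling).

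\emph{Main obstacle: showing $v\in\cstu(X)$.} The $x$-th column of $v$ is a unit vector $\xi_x$ spanning $\Psi(\chi_x)\ell_2(X)$. By the choice of $h$ together with Lemma~\ref{lem:Inv} and Lemma~\ref{lem:JanRufus}, each $\xi_x$ is uniformly concentrated near $x$: for every $\eta>0$ there is an $R$, independent of $x$, with $\norm{(1-\chi_{B_R(x)})\xi_x}<\eta$. Uniform concentration of the columns does not by itself give norm approximability by finite propagation operators, and in \cite{WhiteWillett2017} this is precisely where property A (via ONL) enters.

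To handle it I would write $v=\Psi(1)\cdot(\text{shift})$ in a different way. By the strong continuity of $\Psi$, we have $v=\SOTh\sum_x\Psi(e_{x,x_0})w_x$ for suitable partial isometries $w_x$, so that $v$ is a strong limit of elements of $\cstu(X)$ whose matrix coefficients decay uniformly. I would then use the fact from \cite{SquareInventiones}/\cite{MartinezVigolo} that such \emph{uniformly quasi-local} operators implementing isomorphisms lie in $\cstu(X)$. If that fact does not apply directly, I would instead argue via Proposition~\ref{prop:Higson2}-type flattening to show that $v$ is quasi-local and that its truncations converge in norm. This is the step I expect to be the hardest part of the proof.
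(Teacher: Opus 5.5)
Your Steps 1--3 match the skeleton of the paper's proof of Theorem~\ref{thm:Cartan}, which contains this statement: Theorem~B of \cite{WhiteWillett2017}, a bijection from Theorem~\ref{thm:main}, and composition with the permutation unitary. Your formula $\sum_x\Psi(e_x)e_x$ is garbled, but $v=Uu_h^*$ with $\Phi=\Ad U$ does the job, and no adjoint is needed at the end. The gap is the step you flag yourself, and neither remedy you offer works.

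First, you never obtain quasi-locality of $v$. Column concentration from Lemmas~\ref{lem:Inv} and~\ref{lem:JanRufus} does not control $\norm{\chi_Bv\chi_C}$ for infinite $C$. Your remark that $v$ is a ``strong limit of elements of $\cstu(X)$ with uniformly decaying coefficients'' is vacuous, since every operator is the strong limit of its compressions $\chi_Fv\chi_F$, $F\in\Fin(X)$. The missing observation is that $u_h^*\chi_C=\chi_{h^{-1}(C)}u_h^*$, so
\[
\norm{\chi_Bv\chi_C}=\norm{\chi_B\Phi(\chi_{h^{-1}(C)})}\qquad(B,C\subseteq X).
\]
You used Lemma~\ref{lemma:itholds} only for one $\delta$ to get $h$, but it gives \eqref{thegoal} for \emph{every} $\delta\in(0,1)$: a bound on $\norm{(1-\chi_{B_m(Z_{E,\varepsilon})})\Phi(\chi_E)}$ that is uniform over all $E\in\Fin(Z)$. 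The sets $Z_{z,\varepsilon}$ have uniformly bounded diameter and $h$ is close to any selector of them, so $B_m(Z_{E,\varepsilon})\subseteq B_R(h(E))$ with $R$ independent of $E$. Strong continuity of $\Phi$ then passes the bound to infinite $E$. Hence $\norm{\chi_Bv\chi_C}\leq\delta$ whenever $d_X(B,C)>R$. This set-level estimate, not a pointwise one, is what \cite{WhiteWillett2017} extracted from ONL.

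Second, you do not establish membership in $\cstu(X)$. For the statement as posed, property A is needed only here, through $\cstql(X)=\cstu(X)$ for property A spaces, so this step is a citation. You instead aim for the generality of Theorem~\ref{thmi:main2}, and there your fallback (norm convergence of truncations of $v$) cannot follow from quasi-locality and concentration alone. $\cstql(X)$ is linearly spanned by its unitaries and can strictly contain $\cstu(X)$ \cite{Ozawa2023uRaSmallerQL}. So there are quasi-local unitaries, automatically with concentrated columns, outside $\cstu(X)$.

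What must be used is that $\Ad v$ preserves $\cstu(X)$, via Proposition~\ref{prop:inner}:
\begin{enumerate}
\item $v\otimes 1_H$ normalises $\mathrm{C}^*(X)$, hence its multiplier algebra $\mathrm{BD}(X)$.
\item Theorem~\ref{thm:MVQL}, which is \cite[Corollary 11.4.10]{MartinezVigoloLong} and not a result of \cite{SquareInventiones} or \cite{MartinezVigolo}, then forces $v\otimes 1_H\in\mathrm{BD}(X)$.
\item A normal slice map returns $v\in\cstu(X)$.
\end{enumerate}
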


We extend Theorem~\ref{thm:WWCartan} outside the property A scope, by essentially re-running the proof of \cite{WhiteWillett2017}. There, White and Willett use bijective coarse rigidity to construct the desired unitary $u
$ and prove that  $u$ is a quasi-local operators. Since in the property A setting $\cstql(X)$ and $\cstu(X)$ coincide, then $u\in\cstu(X)$. Using Theorem~\ref{thm:main}, one can still construct the unitary $u$ and prove that it is quasi-local, exactly as in \cite{WhiteWillett2017}. Yet, since outside of the property A setting the inclusion $\cstu(X)\subseteq\cstql(X)$ might be proper (this is the main result of \cite{Ozawa2023uRaSmallerQL}), to conclude that the desired unitary belongs to $\cstu(X)$ an additional argument is needed. For this, we slightly generalise a result of Mart\'inez and Vigolo from \cite{MartinezVigoloLong}.

Let $(X,d)$ be a u.l.f.\ metric space. If $H$ is a separable infinite-dimensional Hilbert space, operators in $\mathcal B(\ell_2(X,H))$ can be viewed as $X$-by-$X$ matrices valued in $\mathcal B(H)$. The propagation of an operator $a=[a_{x,x'}]_{x,x'\in X}$ is again the quantity
\[
\propg(a)=\sup \{d(x,x')\mid a_{x,x'}\neq 0\}.
\]
The \emph{algebra of banded operators}, denoted $\mathrm{BD}(X)$, is the closure of the algebra of finite propagation operators. The \emph{Roe algebra} of $X$, denoted $\mathrm{C}^*(X)$, is the subalgebra of $\mathrm{BD}(X)$ given by locally compact operators, meaning that $a=[a_{x,x'}]$ belongs to $\mathrm{C}^*(X)$ if and only if $a\in \mathrm{BD}(X)$ and each $a_{x,x'}$ belongs to $\mathcal K(H)$. As noticed in \cite[Theorem 4.1]{BragaVignatiGelfand}, $\mathrm{BD}(X)$ is the multiplier algebra of $\mathrm{C}^*(X)$.

For $A\subseteq X$, we denote again by $\chi_A$ the (infinite-dimensional) projection onto $\ell_2(A,H)$. When viewed as an $X$-by-$X$ matrix, 
\[
(\chi_A)_{x,x'}=\begin{cases}
    1_{\mathcal B(H)} &\text{ if } x=x'\in A\\
    0 & \text{ else}.
\end{cases}
\]
We say that an operator $a\in\mathcal B(\ell_2(X,H))$ is quasi-local if for every $\varepsilon>0$ there is $r>0$ such that $\norm{\chi_Aa\chi_B}<\varepsilon$ for every $A,B\subseteq X$ with $d(A,B)>r$. The algebra of quasi-local operators on $\ell_2(X,H)$ is denoted by $\mathrm{C}^*_{qlBD}(X)$.

In \cite[Corollary 11.4.10]{MartinezVigoloLong}, Mart\'inez and Vigolo showed the following:
\begin{theorem}\label{thm:MVQL}
Let $X$ be a u.l.f.\ metric space, and suppose that $u\in \mathrm{C}^*_{qlBD}(X)\setminus\mathrm{BD}(X)$ is a unitary. Then the automorphism of $\mathrm{C}_{qlBD}^*(X)$ given by $a\mapsto uau^*$ does not send $\mathrm{BD}(X)$ to itself. 
\end{theorem}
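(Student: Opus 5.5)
The plan is to prove the contrapositive. Suppose $u\in\mathrm{C}^*_{qlBD}(X)$ is a unitary with $u\,\mathrm{BD}(X)\,u^*\subseteq \mathrm{BD}(X)$; we show $u\in\mathrm{BD}(X)$. The argument is a diagonal one in the spirit of Lemma~\ref{lem:SOT} and Proposition~\ref{prop:Higson2}. The idea is that if $u$ is uniformly far from finite-propagation operators, then we can detect this on a sequence of far-apart finite blocks. Gluing these blocks produces a single propagation-zero operator $b\in\mathrm{BD}(X)$ with $ubu^*\notin\mathrm{BD}(X)$.

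\emph{Step 1 (localisation).} We first reduce to finite pieces. Assume $u\notin\mathrm{BD}(X)$ and fix $\varepsilon>0$ with $\operatorname{dist}(u,\mathrm{BD}(X))>\varepsilon$. Using quasi-locality of $u$ and uniform local finiteness, we show the following. For every $r$ and every finite $F\subseteq X$ there are a finite $A\subseteq X\setminus F$ and a finite-rank projection $p\in\mathcal B(H)$ such that $u(\chi_A\otimes p)$ is within $\varepsilon/10$ of $\chi_{B_s(A)}u(\chi_A\otimes p)$, where $s$ depends only on $\varepsilon$. Moreover this block witnesses non-bandedness at scale $r$: $u(\chi_A\otimes p)u^*$ cannot be approximated within $\varepsilon/2$ by an operator of propagation $\le r$. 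The point is that an operator that is $\varepsilon$-banded on every such block uniformly (with $r$ fixed) is globally banded. This should follow by cutting $u$ with a partition of unity subordinate to a uniformly bounded cover, using quasi-locality to control the cut-off errors; this is the standard argument that quasi-local plus uniformly approximable blockwise implies membership in $\mathrm{BD}(X)$.

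\emph{Step 2 (diagonalisation).} Inductively we choose blocks $A_n$ and projections $p_n$, with witness scale $r_n=n$. Using local finiteness and the uniform localisation constant $s$, we arrange $d(A_n,A_k)\geq 2(s+n+k)$, exactly as the sets $A_n$ were spread out in the proof of Proposition~\ref{prop:Higson2}. Set $b=\sum_n \chi_{A_n}\otimes p_n$. This has propagation zero, so $b\in\mathrm{BD}(X)$, and by hypothesis $ubu^*\in\mathrm{BD}(X)$. Pick $c$ of some finite propagation $r$ with $\|ubu^*-c\|<\varepsilon/10$. Now compress by $\chi_{B_s(A_n)}$ for $n>r$. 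By the spacing and Step 1, the compression of $ubu^*$ agrees up to $\varepsilon/5$ with $u(\chi_{A_n}\otimes p_n)u^*$, while the compression of $c$ still has propagation $\le r$. This contradicts the choice of the $n$-th block.

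\emph{Main obstacle.} The main obstacle is Step 1, and specifically the uniformity. We need to produce blocks far from any given finite set and uniformly localised, on which the failure of bandedness is still visible. The worry is that non-bandedness could be spread over infinitely many points at once; this is where the finite-rank projections $p_n$ in $H$ and compactness-type arguments must be used. I would first try to mimic Lemma~\ref{lem:SOT}. The first ingredient is that $u$ is strongly continuous on bounded sets, since it is a fixed operator. The second is that, for finite $F$, the operator $\chi_F\otimes q$ with $q$ of finite rank is compact. Together these allow $F$ to be pushed off to infinity while keeping $\|(\chi_{X\setminus F}\otimes 1)u(\chi_{A}\otimes p)-u(\chi_{A}\otimes p)\|$ small. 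If this direct approach stalls, the fallback is to show first that $\mathrm{Ad}\,u$ preserves $\mathrm{C}^*(X)$, which quasi-locality gives via local compactness. One would then invoke Mart\'inez and Vigolo's description of isomorphisms of Roe algebras to get coarse control on $u$. Combined with quasi-locality, this would give the required uniform approximation.
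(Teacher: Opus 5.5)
There is a genuine gap, and it lies in the second half of Step~1, which is where all the content of the statement sits. Step~2 is a sound reduction \emph{provided} the blocks of Step~1 exist. It is the usual gluing argument: once the blocks are more than $2s$ apart, all cross terms are handled at once by quasi-locality, since $\chi_{B_s(A_n)}u\chi_{\bigcup_{k\neq n}A_k}$ is small. This gives a propagation-zero $b$ with $ubu^*\notin\mathrm{BD}(X)$.

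Step~1, however, asserts that $\mathrm{dist}(u,\mathrm{BD}(X))>\varepsilon$ yields, for every $r$, far-out blocks $b=\chi_A\otimes p$ whose \emph{conjugates} $ubu^*$ are not $\varepsilon/2$-approximable by propagation-$r$ operators. Your justification is about a different operator. Cutting $u$ up with quasi-locality (or a weak-operator compactness argument) only shows that non-approximability of $u$ is witnessed by some far-out block $u(\chi_A\otimes p)$. Nothing in the proposal transfers this to $u(\chi_A\otimes p)u^*$, and the two can behave completely differently: for $A=X$, $p=1$ the block is $u$ while the conjugate is $1$. Moreover, by quasi-locality alone each conjugate $u(\chi_A\otimes p)u^*$ is already close to its compression by $\chi_{B_s(A)}$. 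So any failure of approximability at scale $r$ would have to come from long-range correlations inside $B_s(A)$, which you never produce.

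To see how much Step~1 carries, note that Step~2 only uses propagation-zero $b$. Take $u=T\otimes 1$ with $T$ a quasi-local unitary on $\ell_2(X)$ outside $\cstu(X)$; then $u\notin\mathrm{BD}(X)$ by the slice-map argument in the proof of Proposition~\ref{prop:inner}. Here $ubu^*=T\chi_AT^*\otimes p$. So Step~1 would say that a quasi-local unitary $T$ for which the projections $T\chi_AT^*$ are uniformly approximable lies in $\cstu(X)$. That is a statement about the canonical masa alone, stronger than Proposition~\ref{prop:inner} (which needs $\mathrm{Ad}(T)$ to preserve all of $\cstu(X)$), and you offer no argument for it.

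The fallback does not close the gap either. Quasi-locality does give $u\mathrm{C}^*(X)u^*\subseteq\mathrm{C}^*(X)$. But rigidity results for a unitary implementing an automorphism of $\mathrm{C}^*(X)$ give approximate support near the graph of a coarse equivalence, here necessarily close to the identity. That is essentially what quasi-locality already tells you. The step `combined with quasi-locality, this gives the required uniform approximation' is exactly the passage from $\mathrm{C}^*_{qlBD}(X)$ to $\mathrm{BD}(X)$, which fails for general quasi-local operators (cf.\ \cite{Ozawa2023uRaSmallerQL}). Unless the result you invoke is the statement itself, this step is the whole theorem.

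The paper does not prove this statement; it quotes it from \cite[Corollary 11.4.10]{MartinezVigoloLong}. What your outline lacks is an ingredient that turns information about the automorphism $\mathrm{Ad}(u)$ into approximability of $u$ itself. For instance, it would suffice to find a unitary $w\in\mathrm{BD}(X)$ with $\mathrm{Ad}(w)=\mathrm{Ad}(u)$ on $\mathrm{C}^*(X)$. Since $\mathcal K(\ell_2(X,H))\subseteq\mathrm{C}^*(X)$, this forces $w^*u\in\mathbb{C}1$, hence $u\in\mathrm{BD}(X)$. Producing such a $w$ is where the real work lies.
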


We extend this result to the uniform setting. The following was already noticed in \cite[Remark 3.1]{BragaSpakulaVignati2025}.

\begin{prop}\label{prop:inner}
Let $X$ be a u.l.f.\ metric space, and suppose that $u$ is a unitary in $\cstql(X)\setminus \cstu(X)$. Then the automorphism of $\mathrm{C}_{ql}^*(X)$ given by $a\mapsto uau^*$ does not send $\cstu(X)$ to itself.
\end{prop}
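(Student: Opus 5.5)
We will derive Proposition~\ref{prop:inner} from Theorem~\ref{thm:MVQL} by tensoring with an auxiliary Hilbert space. Fix a separable infinite-dimensional $H$ and identify $\ell_2(X,H)\cong\ell_2(X)\otimes H$. Let $u\in\cstql(X)\setminus\cstu(X)$ be a unitary, and suppose, towards a contradiction, that $u\cstu(X)u^*\subseteq\cstu(X)$. Set $U=u\otimes 1_H$. Then $U$ is a unitary on $\ell_2(X,H)$, and $\chi_A U\chi_B=(\chi_Au\chi_B)\otimes 1_H$ has the same norm as $\chi_Au\chi_B$, so $U\in\mathrm{C}^*_{qlBD}(X)$.

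The first point to check is that $U\notin\mathrm{BD}(X)$. For this we use the compression $\Psi\colon\mathcal B(\ell_2(X)\otimes H)\to\mathcal B(\ell_2(X))$ given by $T\mapsto(1\otimes\langle\cdot,e\rangle)T(1\otimes e)$ for a fixed unit vector $e\in H$. This map is contractive and does not increase propagation, so it maps $\mathrm{BD}(X)$ into $\cstu(X)$. Since $\Psi(U)=u\notin\cstu(X)$, we get $U\notin\mathrm{BD}(X)$.

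The second, and main, step is to show that $U\,\mathrm{BD}(X)\,U^*\subseteq\mathrm{BD}(X)$; Theorem~\ref{thm:MVQL} then gives the contradiction. Every finite propagation $T\in\mathrm{BD}(X)$ of propagation at most $r$ decomposes, via a finite colouring of the $r$-neighbourhood of the diagonal (available by uniform local finiteness), as a finite sum $\sum_i v_i T_i$. Here each $v_i$ is a partial translation in $\cstu(X)$, viewed as $v_i\otimes 1_H$, and each $T_i$ is a block-diagonal operator with entries in $\mathcal B(H)$. Then $UTU^*=\sum_i (uv_iu^*\otimes 1)(UT_iU^*)$, and $uv_iu^*\otimes 1\in\cstu(X)\otimes 1\subseteq\mathrm{BD}(X)$ by hypothesis. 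So it suffices to treat block-diagonal $T$, that is, elements of $\ell_\infty(X,\mathcal B(H))$. For these the hypothesis is not directly usable, since $T$ is not an elementary tensor. We would first approximate in $H$ by finite rank: compress by $1\otimes q$ with $q$ a finite rank projection. Then $\ell_\infty(X,M_n)\cong\ell_\infty(X)\otimes M_n$, and $U(\ell_\infty(X)\otimes M_n)U^*\subseteq(u\ell_\infty(X)u^*)\otimes M_n\subseteq\cstu(X)\otimes M_n\subseteq\mathrm{BD}(X)$.

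Passing from finite rank back to general $T$ is the expected obstacle, because the projections $1\otimes q_n$ converge to $1$ only strongly, not in norm. If this fails, the fallback is to avoid Theorem~\ref{thm:MVQL} and rerun the Mart\'inez--Vigolo argument directly in the uniform setting. That argument uses that $\operatorname{Ad}u$ preserves $\cstu(X)$, so that $u\chi_A u^*\in\cstu(X)$ for every $A\subseteq X$, together with their approximation criterion for membership in $\cstu(X)$ via uniform approximability of $\{\chi_A u \chi_B\}$ (in the spirit of \cite{BragaFarah2018}).
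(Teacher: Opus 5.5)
You follow the same skeleton as the paper. You pass to $v=u\otimes 1_H$ (your $U$), show $v\notin\mathrm{BD}(X)$ by slicing (your $\Psi$ is the paper's slice map $L_\psi$ for the vector state of $e$), and conclude with Theorem~\ref{thm:MVQL}.

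The genuine gap is the step you flag yourself: you never establish $U\,\mathrm{BD}(X)\,U^*\subseteq\mathrm{BD}(X)$, and without it Theorem~\ref{thm:MVQL} cannot be applied. Your reduction to block-diagonal $T\in\ell_\infty(X,\mathcal B(H))$ is correct. However, the finite-rank step only shows that each compression $U(1\otimes q)T(1\otimes q)U^*=\sum_{i,j}u t_{ij}u^*\otimes e_{ij}$ lies in $\cstu(X)\otimes M_n$. Approximating the $n^2$ entries separately gives an error of order $n^2\varepsilon$ at a fixed propagation, so the propagation you need depends on $n=\rank q$.

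That, and not strong versus norm convergence, is the real obstruction. If a single $r=r(\varepsilon)$ made every such compression $\varepsilon$-close to an operator of propagation at most $r$, a WOT-compactness argument would transfer this to $UTU^*$. The reasons are that propagation at most $r$ is WOT-closed, the norm is WOT-lower semicontinuous, and $(1\otimes q)UTU^*(1\otimes q)\to UTU^*$ strongly. So the missing ingredient is approximability of $\mathrm{Ad}(u)$ on the unit balls of $\ell_\infty(X)\otimes M_n$, uniformly in $n$. The uniform approximability of $\{ufu^*:\norm{f}\leq 1\}$ from \cite{BragaFarah2018} only handles $T$ that are diagonal in a fixed basis of $H$ (then $UTU^*$ is a direct sum of operators $ufu^*$), not general block-diagonal $T$. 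Your fallback paragraph lists ingredients but does not give an argument.

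The paper does not verify this inclusion by hand. It asserts, in one line, that $\mathrm{Ad}(v)$ maps the Roe algebra $\mathrm{C}^*(X)$ onto itself. It then invokes $\mathrm{BD}(X)=M(\mathrm{C}^*(X))$ \cite[Theorem 4.1]{BragaVignatiGelfand}: a unitary normalising the nondegenerate algebra $\mathrm{C}^*(X)$ normalises its idealiser, which is $\mathrm{BD}(X)$. This multiplier-algebra step is the idea missing from your proposal, but it relocates your obstacle rather than removing it.

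Conjugation by $u\otimes 1$ preserves local compactness for every unitary $u$. Hence $\mathrm{Ad}(v)[\mathrm{C}^*(X)]=\mathrm{C}^*(X)$ is equivalent to $\mathrm{Ad}(v)[\mathrm{BD}(X)]=\mathrm{BD}(X)$. Your own decomposition reduces it to $v\,\ell_\infty(X,\mathcal K(H))\,v^*\subseteq\mathrm{BD}(X)$. This is immediate when $\{T_x\}$ is norm-totally bounded, since then $T\in\ell_\infty(X)\otimes\mathcal K(H)$ and $vTv^*\in\cstu(X)\otimes\mathcal K(H)$. Otherwise it runs into exactly the uniformity problem above.

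Since the paper gives no argument for the invariance of $\mathrm{C}^*(X)$, a complete proof along either route must still supply it. Alternatively, you could run the Mart\'inez--Vigolo argument directly in the uniform setting, as your fallback suggests.
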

\begin{proof}
Fix a unitary $u\in\cstql(X)\setminus \cstu(X)$, and suppose that $Ad(u)[\cstu(X)]=\cstu(X)$. Let $v=u\otimes 1_{\mathcal B(H)}$, so that $v$ defines an automorphism of the Roe algebra $\mathrm{C}^*(X)$, meaning that $Ad(v)[\mathrm{C}^*(X)]=\mathrm{C}^*(X)$. Since $\mathrm{BD}(X)$ is the multiplier algebra of $\mathrm{C}^*(X)$ (\cite[Theorem 4.1]{BragaVignatiGelfand}), $Ad(v)$ maps $\mathrm{BD}(X)$ to $\mathrm{BD}(X)$.
\begin{claim}\label{claim:expanding}
    $v\in\mathrm{C}^*_{qlBD}(X)\setminus \mathrm{BD}(X)$.
\end{claim}
\begin{proof}
Let $\psi$ be a normal state on $\mathcal B(H)$, and consider the slice map \[
L_\psi\colon \mathcal B(\ell_2(X,H))=\mathcal B(\ell_2(X))\bar\otimes\mathcal B(H)\to\mathcal B(\ell_2(X)).
\]
$L_\psi$ is a conditional expectation (see \cite[III.2.2.6]{Blackadar.OA}) onto $\mathcal B(\ell_2(X))$. As for an operator in $\mathcal B(\ell_2(X,H))=\mathcal B(\ell_2(X))\bar\otimes\mathcal B(H)$ the property of `having finite propagation' depends only on $\mathcal B(\ell_2(X))$, $L_\psi$ maps finite propagation operators to finite propagation operators. Suppose now that $v\in\mathrm{BD}(X)$, fix $\varepsilon>0$, and let $v'\in\mathcal B(\ell_2(X,H))$ be a finite propagation operator with $\norm{v'-v}<\varepsilon$. Then $\norm{L_\psi(v')-L_\psi(v)}<\varepsilon$. As $L_{\psi}(v')\in\mathcal B(\ell_2(X))$ has finite propagation and $L_\psi(v)=u$, we have shown that $u$ can be $\varepsilon$ approximated by a finite propagation operator in $\mathcal B(\ell_2(X))$. As $\varepsilon$ is arbitrarily small, this shows that $u\in\cstu(X)$. This is a contradiction.
\end{proof}
By Theorem~\ref{thm:MVQL} and Claim~\ref{claim:expanding}, $v\mathrm{BD}(X)v^*$ is not contained in $\mathrm{BD}(X)$. This is a contradiction.
\end{proof}

We are ready to extend Theorem~\ref{thm:WWCartan}.

\begin{theorem}\label{thm:Cartan}
Let $X$ be a u.l.f.\ metric space and suppose that $A\subseteq \cstu(X)$ is a Roe Cartan subalgebra. Then there is a unitary $v\in\cstu(X)$ such that $vAv^*=\ell_\infty(X)$.
\end{theorem}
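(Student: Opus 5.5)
We will re-run the argument of \cite[Theorem E]{WhiteWillett2017}, replacing its uses of property A by Theorem~\ref{thm:main} and Proposition~\ref{prop:inner}.

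First, by \cite[Theorem B]{WhiteWillett2017}, the Roe Cartan pair $A\subseteq\cstu(X)$ is isomorphic to a pair $\ell_\infty(Y)\subseteq\cstu(Y)$ for some u.l.f.\ metric space $Y$. So there is a $^*$-isomorphism $\Phi\colon\cstu(Y)\to\cstu(X)$ with $\Phi(\ell_\infty(Y))=A$. Isomorphisms of uniform Roe algebras are spatial (\cite[Lemma 3.1]{SpakulaWillett2013AdvMath}), so $\Phi=\Ad(w)$ for a unitary $w\colon\ell_2(Y)\to\ell_2(X)$. Each $\Phi(\chi_y)$ is then a rank one projection in $A$, spanned by a unit vector $\xi_y=w\delta_y$.

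Second, we apply Theorem~\ref{thm:main}, and more precisely Lemma~\ref{lemma:hall} together with Lemma~\ref{lemma:itholds}, to $\Phi^{-1}$. This gives a bijective coarse equivalence $f\colon X\to Y$ with $f(x)\in B_m(Y_{x,\varepsilon})$, with $Y_{x,\varepsilon}$ computed for $\Phi^{-1}$. Let $u_f$ be the permutation unitary $\delta_x\mapsto\delta_{f(x)}$. It lies in $\cstu$ as a partial translation and satisfies $u_f\ell_\infty(X)u_f^*=\ell_\infty(Y)$. Set
\[
v=u_f^*w^*\colon\ell_2(X)\to\ell_2(X),\qquad v\,\xi_{f(x)}=\delta_x .
\]
Then $vAv^*=u_f^*\ell_\infty(Y)u_f=\ell_\infty(X)$. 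The phases of the $\xi_y$ are free, and this freedom does not affect the conclusion.

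It remains to show that $v\in\cstu(X)$. The strategy is to check quasi-locality first and then upgrade.

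For quasi-locality, the matrix entries of $v$ are $v_{x,z}=\langle\delta_z,\xi_{f(x)}\rangle$, up to conjugation. We need that the vectors $\xi_{f(x)}$ are concentrated near $x$ uniformly in $x$. This comes from Lemma~\ref{lem:Inv}, Lemma~\ref{lem:JanRufus} and the fact that $f$ is a coarse inverse of the guessing map. Carrying out the estimate needs strong quasi-locality-type control of $\Phi$, as in \cite[Lemma 6.6]{WhiteWillett2017}, but not property A. Because of this, I will rather argue structurally. Both $\Ad(w^*)$ and $\Ad(u_f^*)$ map uniform Roe algebras onto uniform Roe algebras, so $\Ad(v)$ is an automorphism of $\cstu(X)$. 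An operator implementing an automorphism of $\cstu(X)$ is quasi-local; this is \cite[Lemma 3.1]{SpakulaWillett2013AdvMath} in its $\cstql$ form, proved via $\Ad(v)$ mapping the compacts and $\ell_\infty(X)$ appropriately. Hence $v\in\cstql(X)$.

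To finish, $v$ is a unitary in $\cstql(X)$ with $\Ad(v)[\cstu(X)]=\cstu(X)$, so Proposition~\ref{prop:inner} gives $v\in\cstu(X)$.

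The main obstacle is the quasi-locality step: proving that $v$ is quasi-local without property A. The first attempt will be the general fact that unitaries spatially implementing isomorphisms between uniform Roe algebras are quasi-local (\cite{SquareInventiones}, \cite{MartinezVigolo}). If that needs adjustment, I will fall back on a direct norm estimate using Lemma~\ref{lemma:itholds} uniformly over finite sets.
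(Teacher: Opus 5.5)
Your skeleton matches the paper: Theorem B of \cite{WhiteWillett2017}, the unitary $v=u_f^*w^*$, $\Ad(v)[\cstu(X)]=\cstu(X)$, and the final appeal to Proposition~\ref{prop:inner}. But the one step with real content, quasi-locality of $v$, rests on a false claim.

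\textbf{The gap.} A unitary implementing an automorphism of $\cstu(X)$ need not be quasi-local. \cite[Lemma 3.1]{SpakulaWillett2013AdvMath} only says that isomorphisms are spatially implemented and strongly continuous. The ``general fact'' you hope to extract from \cite{SquareInventiones,MartinezVigolo} is false as stated. For $X=\mathbb Z$ and $g(n)=-n$, the permutation unitary $u_g$ normalises $\cstu(\mathbb Z)$, $\ell_\infty(\mathbb Z)$ and the compacts. Yet $\norm{\chi_{\{-n\}}u_g\chi_{\{n\}}}=1$ while $d(\{-n\},\{n\})=2n$. Combined with Proposition~\ref{prop:inner}, your claim would make every automorphism of $\cstu(X)$ inner.

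More to the point, no argument that ignores how $f$ was chosen can work. Let $g$ be a bijective coarse equivalence of $X$ not close to the identity. Then $f\circ g$ is an equally good bijective coarse equivalence, and the resulting unitary $u_{f\circ g}^*w^*=u_g^*v$ still conjugates $A$ onto $\ell_\infty(X)$ and normalises $\cstu(X)$. But it and $v$ cannot both be quasi-local, since then $u_g^*=(u_g^*v)v^*$ would be. So quasi-locality of $v$ must come from two things: $f$ being close to the guessing map of $\Phi^{-1}$, and a norm estimate uniform over \emph{all} subsets of $X$, not just singletons.

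\textbf{The fix.} That uniform estimate is exactly Lemma~\ref{lemma:itholds}: \eqref{thegoal} holds for \emph{every} $\delta\in(0,1)$, without property A. This is the input that replaces property A when the paper reruns the proof of \cite[Theorem E]{WhiteWillett2017} to get $v\in\cstql(X)$, and the estimate is short.
\begin{enumerate}
\item Since $\chi_Cu_f^*=u_f^*\chi_{f(C)}$, one has $\norm{\chi_Cv\chi_D}=\norm{\chi_{f(C)}\Phi^{-1}(\chi_D)}$ for $C,D\subseteq X$.
\item Given $\delta$, Lemma~\ref{lemma:itholds} applied to $\Phi^{-1}$ gives $\varepsilon,m$ with $\norm{(1-\chi_{B_m(Y_{D,\varepsilon})})\Phi^{-1}(\chi_D)}<\delta$ for all finite $D$.
\item By Lemma~\ref{lem:JanRufus}, the sets $Y_{x,\varepsilon}$ lie within a uniformly bounded distance of $f(x)$. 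Hence $B_m(Y_{D,\varepsilon})\subseteq B_{m'}(f(D))$ for some $m'$.
\item Since $f$ is expanding, $f(C)$ misses $B_{m'}(f(D))$ once $d(C,D)$ is large. Hence $\norm{\chi_Cv\chi_D}\le\delta$; for infinite $D$, exhaust by finite subsets.
\end{enumerate}
So the fallback in your last sentence is the right route, and it needs nothing beyond Lemma~\ref{lemma:itholds}. The structural argument should be dropped.

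A minor point: $u_f$ maps $\ell_2(X)$ to $\ell_2(Y)$, so it does not ``lie in $\cstu$''. What you actually use is $\Ad(u_f)[\cstu(X)]=\cstu(Y)$.
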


\begin{proof}
We reason as in \S6 of \cite{WhiteWillett2017}. Let $A\subseteq \cstu(X)$ be a Roe Cartan subalgebra. By Theorem B of \cite{WhiteWillett2017}, there is a u.l.f.\ metric space $Y$ and a unitary $u\colon \ell_2(Y)\to\ell_2(X)$ such that 
\[
u\ell_\infty(Y)u^*=A \text{ and }u\cstu(Y)u^*=\cstu(X).
\]
Let $f\colon X\to Y$ be a bijective coarse equivalence constructed as in the proof of Theorem~\ref{thm:main}. We let $w\colon\ell_2(X)\to \ell_2(Y)$ be the unitary defined by $w\delta_x=\delta_{f(x)}$. Note that since $f$ is a bijective coarse equivalence $w\cstu(X)w^*=\cstu(X)$.
Set
\[
v=w^*u^*.
\]
We claim that $v$ is as required. Note that 
\[
vAv^*=w^*u^*Auw=w^*\ell_\infty(Y)w=\ell_\infty(X).
\]
Following exactly the same proof as in Theorem E in \cite{WhiteWillett2017}, we have that $v$ belongs to $\cstql(X)$. Since
\[
v\cstu(X)v^*=w^*u^*\cstu(X)uw=w^*\cstu(Y)w=\cstu(X),
\]
$\mathrm{Ad}(v)$ induces an automorphism of $\cstu(X)$. The contrapositive of Proposition~\ref{prop:inner} shows that $v\in\cstu(X)$, and this concludes the proof.
\end{proof}

We do not know whether the requirement of co-separability in the definition of Roe Cartan pair is necessary, or whether the latter is already automatic. The strongest result in this direction is \cite[Theorem 1.12]{SquareInventiones} asserting that if $\cstu(X)$ and $\cstu(Y)$ are isomorphic, both spaces are uniformly locally finite, and $X$ is metrizable, then $Y$ contains a coarse copy of $X$. For more on rigidity of uniform Roe algebras associated to general coarse structures, see \S\ref{ss.remarks}.

\subsection{Automorphisms of uniform Roe algebras}

A consequence of Theorem~\ref{thm:WWCartan} is a Gelfand duality type of result for uniform Roe algebras, which we now describe. 

For a u.l.f.\ metric space $X$, we denote by $\mathrm{BijCoa}(X)$ the group of bijective coarse equivalences of $X$ modulo closeness. If $f\colon X\to X$ is a bijective coarse equivalence, the unitary $u_f\colon \ell_2(X)\to \ell_2(X)$ defined by $u_f\delta_x=\delta_{f(x)}$ gives an automorphism $Ad(u_f)\colon \cstu(X)\to\cstu(X)$. If $f$ and $f'$ are two close bijective coarse equivalences, then the corresponding automorphisms are conjugated by the inner automorphism of $\cstu(X)$ given by the unitary which sends $\delta_{f(x)}$ to $\delta_{f'(x)}$. It is easy to verify that $u_f\in\cstu(X)$ if and only if $f$ is close to the identity. This association gives a canonical injective homomorphism 
\[
\mathrm{BijCoa}(X)\to\mathrm{Out}(\cstu(X)),
\]
where $\mathrm{Out}(\cstu(X))$ is the group of automorphisms of $\cstu(X)$ modulo inner ones. Theorem A in \cite{BragaVignatiGelfand} asserts that in case of property A such canonical map is an isomorphism. We extend this result.

\begin{theorem}\label{thm:automs}
Let $X$ be a u.l.f.\ metric space. Then the canonical homomorphism
\[
\mathrm{BijCoa}(X)\to\mathrm{Out}(\cstu(X))
\]
is an isomorphism.
\end{theorem}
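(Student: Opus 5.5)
Injectivity of the canonical map is already recorded before the statement, so the plan is to prove surjectivity. Fix an automorphism $\Phi$ of $\cstu(X)$. The idea is to show that, up to an inner automorphism, $\Phi$ is implemented by the permutation unitary of a bijective coarse equivalence.

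First consider $A=\Phi[\ell_\infty(X)]$. Since $\Phi$ is an automorphism and $\ell_\infty(X)\subseteq\cstu(X)$ is a Cartan masa isomorphic to $\ell_\infty(\N)$, the subalgebra $A$ is again a Cartan masa isomorphic to $\ell_\infty(\N)$. It is also co-separable: if $S$ is a countable set with $\cstu(X)=\mathrm{C}^*(\ell_\infty(X),S)$, then $\cstu(X)=\mathrm{C}^*(A,\Phi[S])$. For the existence of such an $S$ I would use that $\cstu(X)$ is generated by $\ell_\infty(X)$ together with the countably many partial translations coming from a countable exhausting family of controlled sets. This uses that $X$ is a countable u.l.f.\ metric space, for instance by generating with operators of the form $\sum \chi_{x'}\,e\,\chi_x$ for partial bijections of bounded displacement, whose countably many "colour classes" suffice. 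So $A$ is a Roe Cartan subalgebra, and Theorem~\ref{thm:Cartan} gives a unitary $v\in\cstu(X)$ with $vAv^*=\ell_\infty(X)$.

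Replacing $\Phi$ by $\Ad(v)\circ\Phi$, which has the same class in $\mathrm{Out}$, we may assume $\Phi[\ell_\infty(X)]=\ell_\infty(X)$. Then $\Phi$ restricts to an automorphism of $\ell_\infty(X)$ that preserves minimal projections, because $\Phi$ preserves rank (it is strongly continuous and sends compacts to compacts, as used earlier). Hence there is a bijection $f\colon X\to X$ with $\Phi(\chi_x)=\chi_{f(x)}$. Since $\Phi$ is implemented by a unitary (automorphisms of uniform Roe algebras are spatial, by \cite[Lemma 3.1]{SpakulaWillett2013AdvMath}), write $\Phi=\Ad(U)$. Then $U\delta_x=\lambda_x\delta_{f(x)}$ for scalars $|\lambda_x|=1$, so $U=u_f\,d$ with $d\in\ell_\infty(X)$ a diagonal unitary. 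Since $d\in\cstu(X)$, we get $\Phi=\Ad(u_f)\circ\Ad(d)$, and $\Phi$ is equal to $\Ad(u_f)$ modulo inner automorphisms.

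It remains to check that $f$ is a bijective coarse equivalence. Since $\Ad(u_f)$ maps $\cstu(X)$ onto itself and $\Ad(u_f)(\chi_x)=\chi_{f(x)}$, we have $Y_{x,\varepsilon}=\{f(x)\}$ for small $\varepsilon$. Lemma~\ref{lem:JanRufus}, applied to the automorphism $\Ad(u_f)$ (with $Y=X$), then yields that $f$ is a coarse equivalence, and $f$ is bijective by construction. Therefore $[\Phi]$ is the image of $[f]$, which proves surjectivity. I expect the co-separability verification to be the only mildly delicate point; the substantive input is Theorem~\ref{thm:Cartan}, which rests on Theorem~\ref{thm:main}.
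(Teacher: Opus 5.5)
Your argument is correct and is essentially the paper's proof: you use Theorem~\ref{thm:Cartan} to conjugate $\Phi[\ell_\infty(X)]$ back onto $\ell_\infty(X)$ by a unitary of $\cstu(X)$, and then use spatiality to see that the resulting automorphism is implemented by a weighted permutation unitary $u_f d$ with $d\in\ell_\infty(X)$. The differences are only in detail: you check co-separability of $\Phi[\ell_\infty(X)]$ explicitly, which the paper takes for granted, and you deduce that $f$ is a coarse equivalence from Lemma~\ref{lem:JanRufus} applied to $\Ad(u_f)$, where $Y_{x,\varepsilon}=\{f(x)\}$, rather than citing Braga--Farah.
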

\begin{proof}
We follow the proof of Theorem A in \cite{BragaVignatiGelfand}. We need to prove that the canonical injective homomorphism is surjective. Let $\Phi \in \Aut(\cstu(X))$, and let $A=\Phi[\ell_\infty(X)]$, so that $A\subseteq\cstu(X)$ is a Roe Cartan subalgebra. By Theorem~\ref{thm:Cartan} we can find a unitary $u \in \cstu(X)$ so that $\Psi=\Ad(u)\circ \Phi$ is an automorphism of $\cstu(X)$ which takes $\ell_\infty(X)$ to itself. As every automorphism of $\cstu(X)$ is spatial (see \cite[Lemma 3.1]{SpakulaWillett2013AdvMath}), we can find a unitary $v\in\mathcal B(\ell_2(X))$ such that $\Psi=\Ad(v)$. As $v\ell_\infty(X)v^*=\ell_\infty(X)$, there is a bijective coarse equivalence $f\colon X\to X$ and a family $(\lambda_x)_{x\in X}$ in the unit circle of $\mathbb C$ so that $v\delta_x=\lambda_x\delta_{f(x)}$ for all $x\in X$ (see Lemma 8.10 and the proof of Theorem 8.1 in \cite{BragaFarah2018}). Hence, $\mathrm{Ad}(v_f)$ equals $\Psi$ modulo $\mathrm{Inn}(\cstu(X))$, which in turn, as $u\in \cstu(X)$, equals $\Phi$ modulo $\mathrm{Inn}(\cstu(X))$.
\end{proof}

\subsection{Concluding remarks}\label{ss.remarks}

We collect a list of remarks and open questions.

\subsubsection{A self-contained proof of bijective rigidity}

Even though our argument relies heavily on the already paved road to rigidity (using for example Lemma 3.2 of \cite{SquareInventiones}), it would be possible to give a completely self-contained proof of Theorem~\ref{thmi:main}, still following the strategy of Willett and \v{S}pakula (\cite{SpakulaWillett2013AdvMath}) and White and Willett (\cite{WhiteWillett2017}). Such an argument would pass by a modification of the sets $X_{x,\varepsilon}$. Fix $X$, $Y$, and $\Phi$ as in \S\ref{S.mainproof}, and let $r\geq 0$. Let $g_{x,r}$ be the `flattened' indicator function at $x$ as introduced before Proposition~\ref{prop:Higson2}. Defining 
\[
\alpha_r(x)=\{y\in Y\mid \norm{\Phi(g_{x,r})\chi_y}>1/2\},
\]
it is possible, by a technical diagonalisation argument relying on Proposition~\ref{prop:Higson2}, to find $r\geq 0$ such that 
\[
|A|\leq \Big|\bigcup_{x\in A}\alpha_r(x)\Big|
\]
for all $A\in \Fin(X)$. Symmetrically, one constructs functions $\beta_r\colon Y\to\Fin(X)$ satisfying the analogue of the above inequality, and then continues as in Lemma~\ref{lemma:alphabeta}. The arguments required to prove that one can find a large enough $r$ such that $\alpha_r$ and $\beta_r$ satisfy the hypotheses of Lemma~\ref{lemma:alphabeta} turn out to be quite technical, slightly unpleasant, and not particularly enlightening. For this reason, we decide not to include them, especially considering the neatness of the proof of Lemma~\ref{lem:Inv} contained in  \cite{MartinezVigolo}.

\subsubsection{General coarse structures}
Coarse spaces are generalisations to the uncountable of the coarse approach to metric spaces. To be precise, if $X$ is a set, some $\mathcal E\subseteq\mathcal P(X\times X)$ is a \emph{coarse structure} on $X$ if
\begin{itemize}
\item the diagonal $\Delta_X=\{(x,x)\in X\times X\mid x\in X\}\in \cE$,
\item if $E\in \cE$, then $E^{-1}=\{(x,y)\in X\times X\mid (y,x)\in E\}\in \cE$,
\item if $E\in \cE$ and $F\subseteq E$, then $F\in \cE$, 
\item if $E,F\in \cE$, then $E\cup F\in \cE$, and
\item if $E,F\in \cE$, then $E\circ F\in\mathcal E$, where
\[
E\circ F=\{(x,z)\in X\times X\mid\exists y\in X,\ (x,y)\in E\wedge (y,z)\in F \}.
\] 
\end{itemize}
Elements of $\mathcal E$ are called \emph{entourages}, and the pair $(X,\mathcal E)$ is called a \emph{coarse space}. A coarse space $(X,\cE)$ is 
\begin{itemize}
    \item connected if it contains all finite subsets of $X\times X$,
    \item uniformly locally finite (u.l.f.\ ) if for all $E\in\mathcal E$ we have that 
\[
\sup_{x\in X}\{y\in X\mid (x,y)\in E\}<\infty, \text{ and }
\]
\item countably generated if there is a countable $S\subseteq\mathcal E$ such that $\mathcal E$ is the smallest coarse structure containing $S$.
\end{itemize}

The definition of coarse functions and (bijective) coarse equivalence have obvious generalisations to the setting of coarse structures.

Typical examples of coarse spaces are induced by metrics. If $d$ is a metric on a set $X$, one considers the coarse structure $\mathcal E_d$ given by $d$-bounded sets, meaning that $E\in\mathcal E_d$ if and only if 
\[
\sup \{d(x,x')\mid (x,x')\in E\}<\infty.
\]
We call a coarse structure $(X,\mathcal E)$ metrizable if $\mathcal E=\mathcal E_d$ for some metric $d$ on $X$. Metrizable coarse structures are the \emph{small} objects in the coarse category. In fact, as shown in \cite[Theorem 2.55]{RoeBook}, for connected coarse structures, metrizability is equivalent to being countably generated.

If $(X,\mathcal E)$ is a u.l.f.\ coarse space, we can construct its uniform Roe algebra $\cstu(X,\mathcal E)$ by closing in norm the set of bounded linear operators on $\ell_2(X)$ which are supported on an entourage. The associated rigidity problem is still open.
\begin{problem}\label{prob:general}
    Prove that any two u.l.f.\ coarse structures $(X,\mathcal E)$ and $(Y,\mathcal F)$ with isomorphic uniform Roe algebras must be bijective coarsely equivalent.
\end{problem}
The only known (partial) solutions to Problem~\ref{prob:general} are in the property A setting, where `weak rigidity' holds (meaning that one can prove coarse equivalence from isomorphism of uniform Roe algebras). We refer to \cite{BragaFarahVignati2020AnnInstFour} and \cite[\S4]{BragaFarah2018} for more details and precise statements.

The difficulty of generalising to the general setting results valid in the metrizable setting is that, often, many of technical arguments are based on diagonalisation techniques, and we do not know whether these are replicable in the nonmetrizable setting. In fact, Higson coronas in the nonmetrizable setting have been only vaguely studied. For example, we do not know whether the Higson corona is isomorphic to the center of the uniform Roe corona for a nonmetrizable coarse space, a fact which is behind our reasoning towards bijective rigidity. We suspect that some of the arguments of \cite[\S4]{BragaFarah2018}, combined with our new techniques, could be helpful in handling rigidity problems in \emph{sufficiently small} coarse structures.

\subsubsection{Embeddings}

Gromov introduced in \cite{Gromov93} the notion of \emph{coarse embeddings} between metric spaces, to serve as the appropriate notion of injection in the coarse setting. A coarse function $f\colon X\to Y$ is a coarse embedding if $f$ is in addition \emph{expanding}, meaning that far points are sent to far points, or, more precisely, that if $g\colon\mathrm{Im}(f)\to X$ is an inverse of $f$, then $g$ is coarse. (A version of this definition suitable for general coarse structures asks for the inverse image of an entourage to be an entourage.) The task of modelling algebraically coarse embeddings between u.l.f.\ metric spaces in terms of certain embeddings between their associated uniform Roe algebras was pursued in \cite{BragaFarahVignati2019}, and most of the known results so far rely on metrizability or regularity assumptions (see e.g. Theorem 1.12 in \cite{SquareInventiones} and Theorems 1.2 and 1.4(ii) of \cite{BragaFarahVignati2019}). It is a concrete plan to develop our new techniques to obtain algebraic conditions equivalent to the existence of injective coarse embeddings between u.l.f.\ metric spaces and solve the corresponding rigidity problems for embeddings.

\bibliographystyle{amsplain}
\bibliography{bibliography}

\end{document}